\newtheorem{theorem}{Theorem}[section]
\newtheorem{corollary}[theorem]{Corollary}
\newtheorem{lemma}[theorem]{Lemma}
\newtheorem{proposition}[theorem]{Proposition}
\theoremstyle{definition}
\newtheorem{definition}[theorem]{Definition}
\newtheorem{remark}[theorem]{Remark}
\newtheorem{example}[theorem]{Example}
\newtheorem*{definition*}{Definition}
\newcounter{theoremintro}
\numberwithin{equation}{section}
\newcommand{\mc}[1]{\mathcal{#1}}
\newcommand{\bb}[1]{\mathbb{#1}}
\def\bh{\bb{B}(\mc{H})}
\def\acts{\curvearrowright}
\def\her{\mathrm{her}}
\def\supp{\mathrm{supp}}
\def\sub{\subset}
\def\precsimd{\precsim_\mathrm{diag}}
\newcommand\blfootnote[1]{%
  \begingroup
  \renewcommand\thefootnote{}\footnote{#1}%
  \addtocounter{footnote}{-1}%
  \endgroup
}
\title{Diagonal comparison of Ample \texorpdfstring{$\mathrm{C}^\ast$}{C*}-diagonals}
\author{Grigoris Kopsacheilis}
\address{Grigoris Kopsacheilis, Mathematisches Institut, Fachbereich Mathematik und Informatik der
Universit\"at M\"unster, Einsteinstrasse 62, 48149 M\"unster, Germany.}
\email{gkopsach@uni-muenster.de}
\urladdr{https://sites.google.com/view/gkopsach}
\author{Wilhelm Winter}
\address{Wilhelm Winter, Mathematisches Institut, Fachbereich Mathematik und Informatik der
Universit\"at M\"unster, Einsteinstrasse 62, 48149 M\"unster, Germany.}
\email{wwinter@uni-muenster.de}
\begin{document}
\begin{abstract}
We introduce \emph{diagonal comparison}, a regularity property of diagonal pairs where the sub-$\mathrm{C}^\ast$-algebra has totally disconnected spectrum, and establish its equivalence with the concurrence of strict comparison of the ambient $\mathrm{C}^\ast$-algebra and dynamical comparison of the underlying dynamics induced by the partial action of the normalisers. As an application, we show that for diagonal pairs arising from principal minimal transformation groupoids with totally disconnected unit space, diagonal comparison is equivalent to tracial $\mc{Z}$-stability of the pair and that it is implied by finite diagonal dimension.

In-between, we show that any projection of the diagonal sub-$\mathrm{C}^\ast$-algebra can be uniformly tracially divided, and explore a property of conditional expectations onto abelian sub-$\mathrm{C}^\ast$-algebras, namely containment of every positive element in the hereditary subalgebra generated by its conditional expectation. We show that the expectation associated to a $\mathrm{C}^\ast$-pair with finite diagonal dimension is always hereditary in that sense, and we give an example where this property does not occur.
\end{abstract}
\maketitle
\blfootnote{\textit{Date}: \today .}

\section*{Introduction}

\renewcommand{\thetheorem}{\Alph{theorem}}
\setcounter{theorem}{0}

\noindent Topological dynamics and operator algebras are intricately related, and developments in the one field frequently inspire advances in the other. In the seminal work of Kerr \cite{Ker20}, structural properties of topological dynamics were identified as conceptual analogues of the regularity properties of $\mathrm{C}^\ast$-algebras which appeared in the Toms--Winter conjecture. Even though similarities between the two pictures have become apparent due to the results in \cite{Ker20} and \cite{KerSza20}, parts of this analogy remain somewhat obscure, thus leading to the definition and study of regularity properties of \emph{pairs} of $\mathrm{C}^\ast$-algebras \cite{LiLiaWin23, LiaTik22, KopLiaTikVac24}. In this paper we aim to build further on the simultaneous study of structure and regularity through the lens of sub-$\mathrm{C}^\ast$-algebras by introducing a comparison property for \emph{Cartan pairs} with the unique extension property (i.e.\ $\mathrm{C}^\ast$-\emph{diagonal} pairs).

The Toms--Winter conjecture  -- now almost a theorem -- is the statement that, for a unital, simple, separable, nuclear, infinite-dimensional $\mathrm{C}^\ast$-algebra, the properties of finite \emph{nuclear dimension}, $\mathcal{Z}$-\emph{stability}, and \emph{strict comparison} are in fact equivalent. It originated in \cite{TomWin09} in the context of Elliott's programme to classify nuclear $\mathrm{C}^\ast$-algebras and we refer the reader to \cite{Win18, Whi23} for further details and a historical account of the subject, and the current status of the conjecture.

On the side of dynamics, for a free minimal action $G \curvearrowright X$ of a countable, discrete, amenable group on a compact, metrisable space by homeomorphisms, Kerr introduced in \cite{Ker20} as respective counterparts to the properties appearing in the Toms--Winter conjecture the structural properties of \emph{finite tower dimension}, \emph{almost finiteness}, and \emph{dynamical comparison}. The main results in that work established a systematic approach for constructing crossed products that fall in the scope of classification theory, as finite tower dimension of $G \curvearrowright X$ implies that the reduced crossed product $\mathrm{C}^\ast$-algebra $C(X) \rtimes_\mathrm{r} G$ has finite nuclear dimension \cite[Theorem~6.2]{Ker20} and almost finiteness in turn implies that $C(X) \rtimes_\mathrm{r} G$ is $\mathcal{Z}$-stable \cite[Theorem~12.4]{Ker20}. 

The aforementioned implications were pushed forward in \cite{LiLiaWin23} and \cite{LiaTik22} respectively, using the framework of sub-$\mathrm{C}^\ast$-algebras. With the development of the theory of \emph{diagonal dimension}, H.\ Liao, K.\ Li and the second-named author obtained upper, and also \emph{lower} bounds of the diagonal dimension of $(C(X) \subset C(X)\rtimes_\mathrm{r} G)$ in terms of the tower dimension of the underlying action \cite[Theorem~5.4]{LiLiaWin23}, indicating that the structure of the action provided by finiteness of its tower dimension not only is strong enough to induce regularity for the crossed product, but it can also be recovered from this regularity, as long as this is witnessed by the sub-$\mathrm{C}^\ast$-algebra $(C(X) \subset C(X)\rtimes_\mathrm{r} G)$. Analogously, Liao and Tikuisis defined in \cite{LiaTik22} \emph{tracial} $\mathcal{Z}$-\emph{stability for a sub}-$\mathrm{C}^\ast$-\emph{algebra} $(D \subset A)$ (based on the Hirshberg--Orovitz tracial $\mathcal{Z}$-stability \cite{HirOro13}) and, in conjunction with \cite{KopLiaTikVac24}, showed that almost finiteness of $G \curvearrowright X$ is in fact equivalent to tracial $\mathcal{Z}$-stability of $(C(X) \subset C(X)\rtimes_\mathrm{r} G)$. Note that the sub-$\mathrm{C}^\ast$-algebras $(D \subset A)$ appearing here are in fact \emph{diagonal pairs}, i.e.\ Cartan pairs with the unique extension property, meaning that $D$ is a maximal abelian $^\ast$-subalgebra of $A$, the \emph{normalisers} $\mc{N}_A(D)=\{ v\in A: vDv^* \cup v^*Dv \subset D\}$ span a dense subspace of $A$ (\emph{regularity}), there is a faithful conditional expectation $A \to D$, and each pure state on $D$ extends uniquely to $A$ \cite{Ren08, Kum86}.

As for dynamical comparison, things are more perplexed. So far there is no example of an action where dynamical comparison fails, and in fact it is known to be automatic in an amplitude of cases, mostly ensured by geometric features of the acting group and the covering dimension of the space on which the action is implemented \cite{DowZha23, KerSza20, Nar24, Nar22, NarPet24}. It is clear that, in general, dynamical comparison alone does not suffice to conclude regularity on the $\mathrm{C}^\ast$-side for the associated crossed product, as was demonstrated in \cite{GioKer10, Nar22}. However, for actions on spaces of finite covering dimension (and in particular zero-dimensional spaces, which comprise the focal case in this context, see \cite[Theorem~7.6]{KerSza20}), dynamical comparison is in fact equivalent to almost finiteness \cite[Theorem~6.1]{KerSza20}, which in turn ensures $\mathrm{C}^\ast$-regularity of the crossed product.

Focusing on diagonal pairs where the sub-$\mathrm{C}^\ast$-algebra has zero-dimensional spectrum (which are usually called \emph{ample}), in order to understand better how dynamical comparison -- in its own right -- relates to regularity on the $\mathrm{C}^\ast$-side, we introduce the concept of \emph{diagonal comparison} of a pair $(D \subset A)$. 

\begin{definition}\label{intro def diag below and diag comp}
Let $(D\sub A)$ be a unital, ample $\mathrm{C}^*$-diagonal pair and let $\Phi\colon A\to D$ denote the associated conditional expectation. For positive elements $a,b\in A_+$, we say that $a$ is \textit{diagonally below} $b$ and write $a\precsimd b$, when for any $\varepsilon>0$ there exist elements $u,w,v\in A$ such that
\begin{enumerate}[label=(\roman*)]
\item\label{itm1intromaindef} $\|a-uwvbv^*w^*u^*\|<\varepsilon$,
\item\label{itm2intromaindef} $\supp(\Phi(u^*u))\sub\supp(\Phi(a))$,\footnote{For $d\in D$ we use $\supp(d)$ to denote the open support of $d$, i.e.\ the set of points in the spectrum of $D$ where $d$ does not vanish.}
\item\label{itm3intromaindef} $\supp(\Phi(vv^*))\sub\supp(\Phi(b))$,
\item\label{itm4intromaindef} $w\in\mc{N}_A(D)$.
\end{enumerate}
If the set of normalised quasitraces $\mathrm{QT}(A)$ is nonempty, we say that $(D\sub A)$ has \textit{diagonal comparison} when, for any $n\in\bb{N}$ and any $a,b\in (A\otimes M_n)_{+}$, if $d_\tau(a)<d_\tau(b)$ for all $\tau\in\mathrm{QT}(A)$, then $a\precsimd b$ in $(D\otimes D_n\sub A\otimes M_n)$.\footnote{Here $d_\tau(.)$ denotes the \emph{dimension function} associated to $\tau$, see  \cite[Definition~II.6.8.13]{Bla06}.}
\end{definition}

The diagonally below relation described in \Cref{intro def diag below and diag comp} is a special form of Cuntz subequivalence (condition \ref{itm1intromaindef} alone would imply that $a$ is Cuntz subequivalent to $b$). The novelty of this definition lies in the involved nature of the elements implementing the subequivalence; we require that these factorise as products of three elements, with the middle term, $w$, being a normaliser and hence imposing a dynamical relation (condition \ref{itm4intromaindef}). However, it is crucial to be able to compare elements from the positive cone of the \emph{ambient} $\mathrm{C}^\ast$-algebra and not just the Cartan sub-$\mathrm{C}^\ast$-algebra, which is why the auxiliary peripheral terms $u$ and $v$ appear in the factorisation. Conditions \ref{itm2intromaindef} and \ref{itm3intromaindef} then ensure that the normaliser $w$ is chosen in a `tight' manner.

The definition described above is well-motivated due to the fact that for an action $G \curvearrowright X$ on a zero-dimensional space, dynamical subequivalence of clopen subsets \cite[Definition~3.1, Proposition~3.5]{Ker20} is the same as Cuntz subequivalence of the respective indicator functions, implemented by normalisers of $(C(X) \subset C(X)\rtimes_\mathrm{r} G)$ (\Cref{rmk:dynamicalsubequivalencenormaliser}; cf.\ \cite[Proposition~2.9]{LiaTik22}). This observation also leads to a natural generalisation of the notion of dynamical comparison from topological dynamics to ample diagonal pairs $(D \subset A)$: we say that the pair has \emph{dynamical comparison} when, for any projections $p$ and $q$ in $D$, $\tau(p) <\tau(q)$ for all traces $\tau$ on $A$ implies that $p$ is Murray-von Neumann subequivalent to $q$ via a partial isometry that is also a normaliser of $(D \subset A)$ (\Cref{def:dynbelow}, \Cref{def:dyncomp}; cf.\ \cite[Definition~2.1]{AraBonBosLi23}).

For technical reasons we focus on $\mathrm{C}^\ast$-algebras $A$ with the property that $\mathrm{QT}(A)=\mathrm{T}(A)$, i.e.\ all quasitraces are traces. It is a long-standing open problem whether this is actually automatic; it is indeed true when $A$ is exact, due to the celebrated result of Haagerup \cite{Hag14}.

With \Cref{intro def diag below and diag comp} at hand, we are able to characterise the concurrence of dynamical comparison of an ample diagonal pair and strict comparison of the ambient $\mathrm{C}^\ast$-algebra (i.e.\ the property of strict inequality on dimension functions detecting Cuntz subequivalence of positive elements; cf.\ \Cref{def:strictcomp}), which is the content of the main theorem of this paper.

\begin{theorem}\label{intro main thm}
Let $(D \subset A)$ be a unital, ample $\mathrm{C}^\ast$-diagonal pair with hereditary associated conditional expectation,\footnote{A conditional expectation $A\to D$ is called \emph{hereditary} when each positive element of $A$ lies in the hereditary sub-$\mathrm{C}^\ast$-algebra generated by its image through the conditional expectation (cf.\ \Cref{def:hermap}).} and assume that $A$ is simple, separable and that $\mathrm{QT}(A)=\mathrm{T}(A)$ is nonempty. The following are equivalent:
\begin{enumerate}[label=\normalfont(\roman*)]
\item\label{itm1intromainthm} $(D \subset A)$ has dynamical comparison and $A$ has strict comparison.
\item\label{itm2intromainthm} $(D \subset A)$ has diagonal comparison.
\end{enumerate}
\end{theorem}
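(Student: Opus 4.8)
The plan is to prove the two implications separately, with \ref{itm2intromainthm}$\Rightarrow$\ref{itm1intromainthm} being essentially formal and \ref{itm1intromainthm}$\Rightarrow$\ref{itm2intromainthm} carrying all of the content. For \ref{itm2intromainthm}$\Rightarrow$\ref{itm1intromainthm}, strict comparison of $A$ is immediate: condition \ref{itm1intromaindef} already exhibits $a$ as an approximate compression $(uwv)b(uwv)^*$, so $a\precsimd b$ forces $a\precsim b$ in the ordinary Cuntz sense and diagonal comparison reads off as strict comparison. For dynamical comparison of the pair I would specialise diagonal comparison to projections $p,q\in D$ with $\tau(p)<\tau(q)$ for all $\tau$; since $d_\tau(p)=\tau(p)$ and $d_\tau(q)=\tau(q)$ this yields $p\precsimd q$, and the task is to upgrade the approximate factorisation to an honest Murray--von Neumann subequivalence implemented by a single normaliser. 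Here I would use ampleness: the diagonal supports are clopen, so the approximate data can be rounded to partial isometries in the pseudogroup of $\mc{N}_A(D)$ and $w$ cut down to $s$ with $s^*s=p$, $ss^*\le q$; this is precisely the identification recorded in \Cref{rmk:dynamicalsubequivalencenormaliser}.

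For \ref{itm1intromainthm}$\Rightarrow$\ref{itm2intromainthm}, fix $a,b\in(A\otimes M_n)_+$ with $d_\tau(a)<d_\tau(b)$ for all $\tau$ and $\varepsilon>0$. First I would create slack: by lower semicontinuity of the dimension functions and compactness of $\mathrm{T}(A)$ there are $\varepsilon',\delta>0$ with $d_\tau((a-\varepsilon')_+)<d_\tau((b-\delta)_+)$ uniformly in $\tau$. The idea is then to funnel the comparison through two clopen projections $p,q\in D$ with $\supp(p)\sub\supp(\Phi(a))$ and $\supp(q)\sub\supp(\Phi(b))$, realising the chain $a\precsimd p\precsimd q\precsimd b$. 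The middle relation $p\precsimd q$ will come directly from the assumed dynamical comparison, producing a normaliser $s$ with $sqs^*=p$ (as in the projection analysis above). The hereditary hypothesis on $\Phi$ enters precisely to guarantee the bound $d_\tau(a)\le d_\tau(\Phi(a))=\mu_\tau(\supp(\Phi(a)))$ (via $a\in\her(\Phi(a))\Rightarrow a\precsim\Phi(a)$), which is what permits a clopen $p$ inside $\supp(\Phi(a))$ to carry enough trace.

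The two outer relations are reconstruction statements whose supports track cleanly once the Cuntz witnesses are cut on the correct side. For $a\precsimd p$, strict comparison gives $x$ with $xpx^*=(a-\varepsilon)_+$ and $x^*x\in\her(p)$; for $q\precsimd b$ it gives $x'$ with $x'bx'^*=(1-\varepsilon)q$. Setting $u=x$, $w=s$ and $v=qx'$, one computes $uwvbv^*w^*u^*=(1-\varepsilon)\,x s q s^* x^*=(1-\varepsilon)\,xpx^*\approx a$, while $\supp(\Phi(u^*u))\sub\supp(p)\sub\supp(\Phi(a))$ because $x^*x\in\her(p)$, and $\supp(\Phi(vv^*))\sub\supp(q)\sub\supp(\Phi(b))$ because $v=qx'$ has range under $q$. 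Thus the one-sided support conditions \ref{itm2intromaindef} and \ref{itm3intromaindef} and the single-normaliser requirement \ref{itm4intromaindef} all hold, and the factorisation composes without the naive products-versus-$\Phi$ difficulty, the explicit left/right cuts absorbing it.

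The main obstacle is therefore the interpolation step: producing \emph{single} clopen projections $p\sub\supp(\Phi(a))$ and $q\sub\supp(\Phi(b))$ whose traces separate the two dimension functions uniformly, i.e.\ $d_\tau((a-\varepsilon')_+)<\tau(p)<\tau(q)<d_\tau((b-\delta)_+)$ for every $\tau\in\mathrm{T}(A)$ at once. This is a comparison-of-open-sets argument: one must show that the traces of clopen subsets of $\supp(\Phi(a))$ approximate $\mu_\tau(\supp(\Phi(a)))$ well enough, and uniformly over the compact simplex $\mathrm{T}(A)$, to be squeezed into the gap. I expect this to require combining strict comparison of $A$, ampleness, the hereditary bound above, and a Dini-type/compactness argument over $\mathrm{T}(A)$, and it is where essentially all of the analytic work will concentrate; once the projections $p,q$ are in hand, the dynamical comparison and the reconstruction-and-composition above assemble the required $\precsimd$ relation mechanically.
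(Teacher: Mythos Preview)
Your overall architecture matches the paper's: \ref{itm2intromainthm}$\Rightarrow$\ref{itm1intromainthm} is short, and \ref{itm1intromainthm}$\Rightarrow$\ref{itm2intromainthm} funnels the comparison through intermediate projections $p,q$ in $D\otimes D_n$, then composes strict comparison on the ends with a normaliser in the middle. Two places, however, are underdetermined.

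For \ref{itm2intromainthm}$\Rightarrow$\ref{itm1intromainthm}, the phrase ``rounding to partial isometries in the pseudogroup'' does not say what actually happens to the non-normaliser factors $u,v$. The paper's argument is sharper: from $\|p-uwvqv^*w^*u^*\|<1$ with $u\in\her(p)$, $v\in\her(q)$, one gets that $uwvqv^*w^*u^*$ is invertible in $\her(p)$, hence so are $uwqw^*u^*$ and $uu^*$; stable finiteness of $\her(p)$ then forces $u$ invertible in $\her(p)$, so $pwqw^*p$ is invertible, and since $wqw^*\in D$ does not vanish on the clopen set $\supp(p)$ one takes $h\in D$ with $(hw)q(hw)^*=p$. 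No rounding of $u,v$ is needed.

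For \ref{itm1intromainthm}$\Rightarrow$\ref{itm2intromainthm}, you correctly isolate the interpolation step as the heart of the matter, but the tools you list (Dini, compactness, the hereditary bound) are not enough. What is actually needed is that $\{\hat p:p\in D\text{ a projection}\}$ is uniformly dense in $\mathrm{Aff}(\mathrm{T}(A))_+^1$; this is the paper's \Cref{cor:evprojdense}, and it rests on a separate structural result, \emph{tracial almost divisibility} of $(D\subset A)$ (\Cref{thm:trdivsimple}): every projection in $D$ can be split into $n$ subprojections each of trace $\approx\frac1n\tau(p)$ uniformly in $\tau$. This is not a Dini argument; it uses simplicity to produce tracially small projections and an ordered refinement (\Cref{prop:orderedprojs}) in the spirit of Zhang. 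Once this density is available, the paper combines it with Choquet interpolation (\Cref{prop:choquetinterpol}) to manufacture $p$ and $q$ with the required strict trace inequalities, and then, as you anticipated, uses dynamical comparison to transport them into $\supp(\Phi(a))$ and $\supp(\Phi(b))$.

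There is also a case distinction you are missing. Your scheme needs $d_\tau((a-\varepsilon')_+)<\tau(p)\le d_\tau(\Phi(a))$, but when $a$ is locally invertible one can have $d_\tau(a)=d_\tau((a-\varepsilon')_+)=d_\tau(\Phi(a))$ for all $\tau$, and then no such $p$ exists inside $\supp(\Phi(a))$. The paper handles this by branching on local invertibility of $a$ and, in the locally invertible case, further on whether the rank of $a$ coincides with the rank of $\Phi(a)$; the arguments in these subcases are genuinely different (continuity of the rank, \Cref{prop:choquetinterpol}, and an auxiliary lower-semicontinuous infimum function from \cite{Thi20}). Your uniform ``$a\precsimd p\precsimd q\precsimd b$'' template does capture the non-locally-invertible case, but you should expect to treat the other cases separately.
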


As a corollary, we obtain a connection of diagonal comparison with tracial $\mc{Z}$-stability and finiteness of diagonal dimension for $\mathrm{C}^\ast$-pairs arising from topological dynamics.

\begin{corollary}\label{intro cor diag comp and trac js stab}
Let $G \acts X$ be a free minimal action of a countable amenable group on a compact, zero-dimensional metrisable space. Consider the following statements:
\begin{enumerate}[label=\normalfont(\roman*)]
\item\label{itc1intro} $\dim_{\mathrm{diag}}(C(X)\subset C(X)\rtimes_\mathrm{r} G) <\infty$.
\item\label{itc2intro} $(C(X) \subset C(X)\rtimes_\mathrm{r} G)$ is tracially $\mathcal{Z}$-stable.
\item\label{itc3intro} $(C(X) \subset C(X)\rtimes_\mathrm{r} G)$ has diagonal comparison.
\end{enumerate}
Then \ref{itc1intro}$\implies$\ref{itc2intro}$\iff$\ref{itc3intro}.
\end{corollary}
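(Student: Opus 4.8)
The plan is to recognise $(C(X)\subset C(X)\rtimes_\mathrm{r} G)$ as an instance to which \Cref{intro main thm} applies, and then to feed the dynamical translations of its two hypotheses into the known correspondence between regularity of the action and of the pair. First I would verify the standing hypotheses of \Cref{intro main thm}: freeness makes $(C(X)\subset C(X)\rtimes_\mathrm{r} G)$ a $\mathrm{C}^\ast$-diagonal pair and forces the transformation groupoid to be principal; compactness and zero-dimensionality of $X$ make the pair unital and ample; minimality makes $A=C(X)\rtimes_\mathrm{r} G$ simple; countability of $G$ and metrisability of $X$ make $A$ separable; amenability makes $A$ nuclear, hence exact, so that $\mathrm{QT}(A)=\mathrm{T}(A)$ by Haagerup's theorem \cite{Hag14}, and this set is nonempty because a minimal action of an amenable group on a compact space admits an invariant probability measure. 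Finally, principality should give that $\Phi$ is hereditary for free: for $a\in A_+$ the value $\Phi(a)(y)$ is the diagonal matrix coefficient $\langle\delta_y,\pi_x(a)\delta_y\rangle$ in the orbit representation on $\ell^2([x])$ for any $y$ in the orbit of $x$, so positivity of $a$ forces $\pi_x(a)\delta_y=0$ whenever $\Phi(a)(y)=0$; hence $\pi_x(a)=P_x\pi_x(a)P_x$ for the support projection $P_x$ of $\pi_x(\Phi(a))$, and faithfulness of the family $(\pi_x)_x$ yields $a\in\overline{\Phi(a)A\Phi(a)}$.

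With the hypotheses in place, the core of the argument is the identification of the two conditions of \Cref{intro main thm} with dynamical data. By \Cref{rmk:dynamicalsubequivalencenormaliser}, for projections $p,q\in C(X)$ the relation of being dynamically below in $(C(X)\subset C(X)\rtimes_\mathrm{r} G)$ is exactly Kerr's dynamical subequivalence of the corresponding clopen sets, so dynamical comparison of the pair coincides with dynamical comparison of $G\acts X$. Since $X$ is zero-dimensional, \cite[Theorem~6.1]{KerSza20} identifies the latter with almost finiteness of $G\acts X$, which by \cite{LiaTik22} together with \cite{KopLiaTikVac24} is in turn equivalent to tracial $\mc{Z}$-stability of the pair, i.e.\ statement \ref{itc2intro}. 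Moreover almost finiteness implies that $A$ is $\mc{Z}$-stable by \cite[Theorem~12.4]{Ker20}, and hence that $A$ has strict comparison.

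These equivalences close the loop $\ref{itc2intro}\iff\ref{itc3intro}$. Indeed, if \ref{itc2intro} holds then $G\acts X$ is almost finite, so the pair has dynamical comparison and $A$ has strict comparison, whence \Cref{intro main thm} delivers diagonal comparison, i.e.\ \ref{itc3intro}. Conversely, if \ref{itc3intro} holds then \Cref{intro main thm} returns dynamical comparison of the pair and strict comparison of $A$; the former is dynamical comparison of $G\acts X$, which for zero-dimensional $X$ forces almost finiteness and hence \ref{itc2intro}. For the implication $\ref{itc1intro}\implies\ref{itc2intro}$ I would invoke the lower bound of \cite[Theorem~5.4]{LiLiaWin23}: finiteness of $\dim_{\mathrm{diag}}(C(X)\subset C(X)\rtimes_\mathrm{r} G)$ forces finiteness of the tower dimension of $G\acts X$, which for free actions of amenable groups implies almost finiteness, and therefore \ref{itc2intro}.

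The step I expect to require the most care is the faithful bookkeeping between the two incarnations of dynamical comparison and, relatedly, the verification that $\Phi$ is hereditary so that \Cref{intro main thm} is genuinely applicable; once the normaliser-implemented subequivalence of clopen sets is matched with Kerr's dynamical subequivalence, the remaining implications are assemblies of the cited results. A secondary point to pin down is the passage from finite tower dimension to almost finiteness in $\ref{itc1intro}\implies\ref{itc2intro}$, where one must ensure that amenability and freeness are used in the form required by the dynamical Toms--Winter-type implication.
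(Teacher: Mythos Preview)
Your overall route coincides with the paper's: verify the standing hypotheses of \Cref{intro main thm}, identify dynamical comparison of the pair with Kerr's dynamical comparison of $G\acts X$ via \Cref{rmk:dynamicalsubequivalencenormaliser}, pass through almost finiteness using \cite[Theorem~6.1]{KerSza20} for zero-dimensional $X$, and close the loop with \cite{LiaTik22,KopLiaTikVac24} for tracial $\mc{Z}$-stability and \cite[Theorem~12.4]{Ker20} for strict comparison; the implication \ref{itc1intro}$\implies$\ref{itc2intro} likewise goes through finite tower dimension, which the paper then routes to almost finiteness explicitly via \cite[Theorem~7.2]{Ker20} followed by \cite[Theorem~6.1]{KerSza20}.

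The one genuine gap is your sketch that $\Phi$ is hereditary. From $\pi_x(a)=P_x\pi_x(a)P_x$ in each orbit representation you only obtain $\pi_x(\Phi(a)^{1/n}a\Phi(a)^{1/n})\to\pi_x(a)$ in the strong operator topology; faithfulness of the family $(\pi_x)_x$ on $A$ does not upgrade this to norm convergence $\|\Phi(a)^{1/n}a\Phi(a)^{1/n}-a\|\to0$, because the support projections $P_x$ lie in $B(\ell^2([x]))$ rather than in $\pi_x(A)$, and the normal extension of $\bigoplus_x\pi_x$ to $A^{**}$ is not faithful. The paper instead uses amenability directly in \Cref{rmk:herexponcrossedprod}: a F{\o}lner sequence produces a completely positive approximation $a=\lim_n\sum_{s}\tfrac{|F_n\cap sF_n|}{|F_n|}E_s(a)u_s$ in norm, and Cauchy--Schwarz for $E$ shows each Fourier term $E_s(a)u_s$ lies in $\her(E(a))$. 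You should substitute that argument for yours.
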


Zero-dimensionality of the spectrum of $D$ is crucial for the proof of \Cref{intro main thm}, as it provides us with a versatile divisibility toolkit. In particular, using an elementary combinatorial argument similar to \cite[Section~1]{Zha91}, we see that for an ample diagonal pair $(D \subset A)$, if $A$ is simple, then any projection $p\in D$ has a subprojection in $D$ that uniformly approximately halves $p$ in trace, a property which we call \emph{tracial almost divisibility} (\Cref{def:tracialalmostdivisibility}), inspired by \cite[Definition~2.5]{Win12}.

\begin{theorem}\label{intro thm tad}
Let $(D \subset A)$ be a unital, ample diagonal pair with $A$ simple and infinite-dimensional and such that $\mathrm{T}(A)$ is nonempty. Then, $(D \subset A)$ has tracial almost divisibility.
\end{theorem}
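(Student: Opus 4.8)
The plan is to work entirely on the level of clopen subsets of the spectrum $\hat D$ of $D$. A projection $p\in D$ is the indicator $\mathbf{1}_U$ of a compact open set $U\sub\hat D$, and the target statement amounts to producing, for each $\varepsilon>0$, a clopen $W\sub U$ with $\sup_{\tau\in\mathrm{T}(A)}\left|\tau(\mathbf{1}_W)-\tfrac12\tau(p)\right|<\varepsilon$. Two standing consequences of the hypotheses will be used throughout. First, since $A$ is simple with $\mathrm{T}(A)$ nonempty and compact, every trace is faithful and $\gamma:=\inf_{\tau}\tau(p)>0$; thus uniform estimates in trace are equivalent to estimates relative to $\tau(p)$. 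Second, since $A$ is simple and infinite-dimensional, $\hat D$ has no isolated points (an isolated point would produce a minimal projection in $A$, forcing $A\cong M_n$).

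First I would record the basic divisibility input coming from simplicity alone: for every nonzero clopen $U$ and every $n\in\bb{N}$ there is a nonzero clopen $V\sub U$ admitting $n$ pairwise disjoint copies $V_1,\dots,V_n\sub U$, each equivalent to $V$ via a normaliser (i.e.\ $v_i\in\mc{N}_A(D)$ with $v_i^*v_i=\mathbf{1}_V$ and $v_iv_i^*=\mathbf{1}_{V_i}$, the $V_i$ disjoint). This follows from minimality of the Weyl groupoid $\mc{G}$ of the pair, which is equivalent to simplicity of $A$: starting from any small clopen brick sitting inside a single bisection, density of $\mc{G}$-orbits together with the absence of isolated points lets one slide $n$ sub-bricks to pairwise disjoint positions. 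Since $\sum_i\tau(\mathbf{1}_{V_i})=n\,\tau(\mathbf{1}_V)\le\tau(p)$ for every $\tau$, this already yields clopen subsets of $U$ whose trace is \emph{uniformly} as small as desired, and all of whose values agree across the whole trace simplex.

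The heart of the argument is then a packing argument in the spirit of \cite[Section~1]{Zha91}. The plan is to fix $2k$ with $1/k<\varepsilon$ and produce pairwise disjoint, pairwise normaliser-equivalent clopen tiles $E_1,\dots,E_{2k}\sub U$ that exhaust $U$ up to a remainder $R=U\setminus\bigsqcup_i E_i$ with $\sup_\tau\tau(\mathbf{1}_R)<\varepsilon$. Granting this, the element $W:=\bigsqcup_{i\le k}E_i$ does the job: because the tiles are mutually equivalent, $\tau(\mathbf{1}_W)=k\,\tau(\mathbf{1}_{E_1})=\tfrac12\big(\tau(p)-\tau(\mathbf{1}_R)\big)$ for every $\tau$, whence $\sup_\tau|\tau(\mathbf{1}_W)-\tfrac12\tau(p)|\le\tfrac12\sup_\tau\tau(\mathbf{1}_R)<\varepsilon$, uniformly in $\tau$. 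To build such a near-tiling I would pack a \emph{maximal} finite family of disjoint normaliser-translates of a fixed small tile produced in the previous step (finiteness is forced because each translate carries the uniform lower trace bound inherited from $\gamma$), and discard at most one translate so that an even number remains; mutual equivalence of the retained tiles is built into their construction as translates of a common $V$.

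The main obstacle is exactly the uniform control of the remainder $R$ \emph{simultaneously across the entire trace simplex}: a greedy or trace-by-trace balancing fails, since the `halfway point' of a naive decomposition into small pieces varies with $\tau$ and the simplex need not be finite-dimensional. This is where minimality must be used decisively, to show that the complement of a maximal packing of congruent tiles is dynamically negligible, and hence uniformly small in trace; morally this is an exhaustion/F\o{}lner-type estimate turning `no further translate of the tile fits' into a uniform upper bound on $\tau(\mathbf{1}_R)$. Once this uniform remainder bound is in place, letting $\varepsilon\to0$ (equivalently $k\to\infty$) and keeping track of the elementary bookkeeping above completes the proof; I expect the remainder estimate, rather than the assembly of $W$, to be the only genuinely delicate point.
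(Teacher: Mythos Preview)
Your approach has a genuine gap at precisely the point you flag as delicate: the uniform remainder estimate. Maximality of the packing says only that no further normaliser-copy of the tile $V$ fits inside $R$, and this does \emph{not} force $\sup_\tau \tau(\mathbf{1}_R)$ to be small. Concretely, suppose there are $\tau_1,\tau_2\in\mathrm{T}(A)$ with $\tau_2(\mathbf{1}_V)=c\,\tau_1(\mathbf{1}_V)$ for some $c>1$; since normaliser-equivalent sets have equal trace, at most $N\le \tau_2(p)/\tau_2(\mathbf{1}_V)$ disjoint copies fit, and then $\tau_1(\mathbf{1}_R)=\tau_1(p)-N\tau_1(\mathbf{1}_V)\ge \tau_1(p)-\tau_2(p)/c$, which is bounded away from $0$ independently of how small $V$ is. The only way to rule this out is to choose $V$ with $\tau(\mathbf{1}_V)$ essentially constant over $\mathrm{T}(A)$, but the existence of such clopen sets is exactly what tracial almost divisibility delivers (cf.\ \Cref{cor:evprojdense}), so the argument becomes circular. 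The ``F{\o}lner/exhaustion'' heuristic does not rescue this: no amenability hypothesis on the Weyl groupoid is available, and even granting one the obstruction above persists whenever $\tau\mapsto\tau(\mathbf{1}_V)$ is non-constant.

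The paper sidesteps the issue by abandoning \emph{equal} tiles in favour of \emph{ordered} ones. Simplicity yields a decomposition $p=\sum_{j=1}^n p_j$ with each $p_j\in D$ dynamically below a fixed projection $q$ chosen (via atomlessness and Dini) so that $\sup_\tau\tau(q)<\varepsilon$; the key combinatorial step (\Cref{prop:orderedprojs}) then rearranges so that $p_1\prec_{(D\sub A)}p_2\prec_{(D\sub A)}\dots\prec_{(D\sub A)} p_n\prec_{(D\sub A)} q$. There is no remainder at all, and although the $p_j$ are not mutually equivalent, their traces satisfy $\tau(p_1)\le\tau(p_2)\le\dots\le\tau(p_n)<\varepsilon$ for \emph{every} $\tau$ simultaneously. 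Taking $n$ even and $p'\coloneqq\sum_{j\text{ even}}p_j$, the interleaving gives $\tau(p-p')\le\tau(p')\le\tau(p-p')+\tau(p_n)$, whence $|\tau(p')-\tfrac12\tau(p)|<\varepsilon$ uniformly. The monotone ordering is exactly what absorbs the trace-to-trace variation that wrecks the equal-tile packing.
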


Tracial almost divisibility of $(D \subset A)$ is what grants us access to the interpolation arguments appearing in the proof of \Cref{intro main thm}, since we can approximate arbitrarily well any affine continuous map over the trace space $\mathrm{T}(A)$ with values in the interval $[0,1]$ by a map that arises as evaluation of traces at a projection in $D$ (see \Cref{cor:evprojdense}).

The assumption of a hereditary conditional expectation in \Cref{intro main thm} is somewhat curious. Clearly this is a strengthening of faithfulness of $\Phi$, and it is actually automatically satisfied for conditional expectations associated to $\mathrm{C}^\ast$-pairs arising from amenable transformation groupoids (\Cref{rmk:herexponcrossedprod}) even without any assumptions on isotropy, but the general situation is rather obscure. We explore this further in \Cref{sec:herexp}, where in \Cref{ex:nonhereditary} we give an example of a (non-regular) $\mathrm{C}^*$-pair where the associated faithful expectation fails to be hereditary. Contrary to this instance, we show that finiteness of diagonal dimension of a $\mathrm{C}^*$-pair implies that the (uniquely determined) associated faithful expectation is hereditary.

\begin{proposition}\label{intro prop finite diag dim her exp}
Let $(D \subset A)$ be a unital diagonal pair with associated conditional expectation $\Phi\colon A \to D$. If $\dim_\mathrm{diag}(D \subset A) < \infty$, then $\Phi$ is hereditary. 
\end{proposition}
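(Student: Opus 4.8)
The plan is to reduce the statement to its finite-dimensional model and transport it through the completely positive approximations witnessing finite diagonal dimension. Recall that for $d\in A_+$ one has $a\in\her(d)$ precisely when the support projection of $a$ in $A^{**}$ is dominated by that of $d$, and that it suffices to show $(a-\delta)_+\in\her(\Phi(a))$ for every $\delta>0$. The elementary fact underlying everything is this: if $b$ is a positive element of a finite-dimensional algebra $E=\bigoplus_i M_{k_i}$ with canonical diagonal $D_E$ and diagonal conditional expectation $E_D\colon E\to D_E$, then $b$ vanishes on every matrix coordinate on which its diagonal vanishes, so $\supp(b)\le\supp(E_D(b))$ and hence $b\in\her(E_D(b))$. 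The task is to promote this to $A$ with $E_D$ replaced by $\Phi$.

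First I would record the key identity $\Phi\circ\varphi=\varphi\circ E_D$ for an approximation $(E,\psi,\varphi)$ as in the definition of $\dim_\mathrm{diag}$, where $\varphi$ restricts to a c.p.c.\ order zero map on each colour block $E^{(l)}$, sends $D_E$ into $D$, and sends matrix units to normalisers. Writing $\varphi^{(l)}=h^{(l)}\pi^{(l)}$ by the order zero structure theorem, with $h^{(l)}=\varphi^{(l)}(1_{E^{(l)}})\in D$ commuting with the range of the $^\ast$-homomorphism $\pi^{(l)}$, the images $\varphi(e^{(i)}_{jk})$ of off-diagonal matrix units ($j\ne k$) are normalisers whose source and range projections $h^{(l)}\varphi(e^{(i)}_{kk})$ and $h^{(l)}\varphi(e^{(i)}_{jj})$ are orthogonal and lie in the abelian algebra $D^{**}$. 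Since $\Phi^{**}$ is a $D^{**}$-bimodule map and $D^{**}$ is commutative, such a normaliser $v$ satisfies $\Phi(v)=\supp(vv^\ast)\,\Phi(v)\,\supp(v^\ast v)=0$; the diagonal matrix units are fixed by $\Phi\circ\varphi$, and linearity yields the identity. In particular $\Phi(\varphi\psi(a))=\varphi(E_D(\psi(a)))$.

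Next I would transport the finite-dimensional containment. For $a\in A_+$ put $b=\psi(a)\in E_+$; then $\supp(b)\le\supp(E_D(b))$. Because each $\varphi^{(l)}$ is order zero it preserves supports and orthogonality, giving $\supp(\varphi^{(l)}(b^{(l)}))=\supp(h^{(l)})\wedge\supp(\pi^{(l)}(b^{(l)}))\le\supp(h^{(l)})\wedge\pi^{(l)}(\supp(E_D(b^{(l)})))=\supp(\varphi^{(l)}(E_D(b^{(l)})))$ in each colour; taking the join over colours (the support of a sum of positive elements being the join of their supports) yields $\supp(\varphi(b))\le\supp(\varphi(E_D(b)))=\supp(\Phi(\varphi(b)))$. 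Combined with the previous step this shows that every approximant satisfies $\varphi\psi(a)\in\her(\Phi(\varphi\psi(a)))$. I would also record the operator inequality $\varphi\psi(a)\le\|a\|\,\varphi(\supp E_D(\psi(a)))$, obtained by applying the order zero (hence monotone) map $\varphi$ to $b\le\|b\|\,\supp(E_D(b))$.

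The main obstacle is the final passage to the limit. As $\varphi\psi(a)\to a$ and $\Phi(\varphi\psi(a))\to\Phi(a)$ in norm, one wants to conclude $a\in\her(\Phi(a))$; but membership in a hereditary subalgebra is not preserved under norm perturbations, the difficulty being concentrated exactly where $\Phi(a)$ is small. Although $(\Phi(\varphi\psi(a))-\delta)_+\le\Phi(a)$ places the natural cut-down of the approximant into $\her(\Phi(a))$, the discarded mass of $\varphi\psi(a)$ on $\{\Phi(\varphi\psi(a))\le\delta\}$ need not be small: a positive matrix can vastly exceed its diagonal (the all-ones matrix being extreme), so no dimension-free bound $\varphi\psi(a)\le\lambda\,\Phi(\varphi\psi(a))$ is available, and mere faithfulness of $\Phi$ does not suffice. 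To close this I would invoke the compatibility of $\psi$ with the expectations, $E_D\circ\psi=\psi\circ\Phi$, so that the diagonal of $b$ equals $\psi(\Phi(a))$ and the small-diagonal region is aligned with $\{\Phi(a)\ \text{small}\}$; then a cut-off $f(\Phi(a))\,\varphi\psi(a)\,f(\Phi(a))$ with $f$ supported on $\{\Phi(a)>\delta\}$ lands in $\her(\Phi(a))$, and one estimates the error through the inequality above and the support identity, letting the approximation tolerance tend to $0$ faster than the cut-off parameter. Verifying that this estimate closes uniformly—equivalently, that the off-diagonal mass over small-diagonal coordinates is controlled independently of the block sizes—is the crux where the full force of finite diagonal dimension is used.
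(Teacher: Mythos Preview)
Your outline identifies the right architecture---approximate, use the finite-dimensional model, cut off where $\Phi(a)$ is small---but two steps do not go through as written.

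First, the intertwining $E_D\circ\psi=\psi\circ\Phi$ is not part of the definition of diagonal dimension and is generally false. The definition only requires $\psi(D)\subset D_F$; for instance, with $A=M_2$, $D=D_2$, $F=\bb{C}$ and $\psi$ the vector state at $\xi=\tfrac{1}{\sqrt{2}}(1,1)$, one has $\psi(D)\subset\bb{C}$ but $E_D\psi\ne\psi\Phi$. Without this identity your alignment of ``small diagonal of $\psi(a)$'' with ``small $\Phi(a)$'' collapses, and with it the control of the error term $f'\varphi\psi(a)f'$. The paper avoids this entirely by invoking an \emph{approximate commutation} property (its Proposition~4.1\,(vi)), namely $\|\varphi_E\psi_E(a)-a^{1/2}\varphi_E\psi_E(1_A)a^{1/2}\|<\delta$ on each matrix summand~$E$. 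This replaces $\varphi\psi(a)$ by $a^{1/2}\varphi\psi(1)a^{1/2}$, so that $\Phi$ acts directly on $a$ and no compatibility of $\psi$ with expectations is needed.

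Second, you correctly flag that a positive matrix can exceed its diagonal by a factor of the block size $r$ (the all-ones matrix), but you do not close this; ``letting the approximation tolerance tend to $0$ faster than the cut-off parameter'' cannot help, because the bad factor $r$ is a feature of a single approximation, not of the tolerance. The paper resolves this by a concrete mechanism: for each matrix summand $E\cong M_r$ it builds, via an irreducible representation of the ideal generated by $\varphi_E(E)$ and a character of $D$ extended through $\Phi$, a unital completely positive map $\varrho\colon A\to M_r$ whose diagonal entries satisfy $|\varrho(b)_{i,i}|\le\|\Phi(b)\|$ for $b\ge0$ (here the normaliser-preserving property of $\varphi$ and the bimodule identity $n\Phi(\cdot)n^*=\Phi(n\cdot n^*)$ are used). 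This gives $\|f'\varphi_E\psi_E(a)f'\|\le r\cdot\|f'\Phi(a)f'\|+\delta$, and the factor $r$ is absorbed by choosing the cut-off $f$ to equal $1$ wherever $\Phi(a)\ge\eta/(2(d+1)r_{\max})$, with $r_{\max}$ the largest block size in that particular approximation. Your sketch contains no analogue of this $r_{\max}$-dependent threshold, and without it the estimate does not close.
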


It is plausible that the conditional expectation of a diagonal pair $(D \subset A)$ is automatically hereditary when $A$ is nuclear, or even just exact.

\subsection*{Acknowledgements} This work was funded by the Deutsche Forschungsgemeinschaft
(DFG, German Research Foundation) under Germany's Excellence Strategy EXC 2044-390685587,
Mathematics M{\"u}nster: Dynamics--Geometry--Structure, by the SFB 1442 of the DFG, and by ERC Advanced Grant 834267 -- AMAREC.

\section{Diagonal comparison}
\renewcommand{\thetheorem}{\thesection.\arabic{theorem}}
\setcounter{theorem}{0}

\noindent We begin this section by establishing some notation and preliminaries.

For a $\rm{C}^\ast$-algebra $A$, we write $A_+$ to denote the cone of positive elements; $A^1$ will denote the closed unit ball of $A$. The set of positive contractions in $A$ is denoted by $A_+^1$. For $a \in A_+$, we denote by $\her(a)$ the hereditary sub-$\mathrm{C}^\ast$-algebra of $A$ generated by $a$. We will refer to $A$ together with a specified sub-$\mathrm{C}^\ast$-algebra $D \subset A$ as the $\mathrm{C}^\ast$-\emph{pair} $(D\subset A)$.

For $n\in\mathbb{N}$, we write $(D_n\subset M_n)$ for the pair of diagonal $n\times n$ matrices in $n\times n$ matrices and $E_n\colon M_n \to D_n$ denotes the unique faithful conditional expectation. For a $\rm{C}^\ast$-algebra $A$, we will naturally identify the trace space of the matrix amplification $A\otimes M_n$ with the trace space of $A$. We recall the definition of a Cartan pair and a $\mathrm{C}^\ast$-diagonal from \cite{Ren08, Kum86}.

\begin{definition}
Let $A$ be a $\mathrm{C}^\ast$-algebra and $D \subset A$ a sub-$\mathrm{C}^\ast$-algebra. We say that $(D \subset A)$ is a \emph{Cartan pair} (or that $D$ is a \emph{Cartan sub}-$\mathrm{C}^\ast$-\emph{algebra} of $A$) when the following conditions are satisfied:
\begin{enumerate}[label=(\arabic*)]
\setcounter{enumi}{-1}
\item\label{itcartandef2} $D$ contains an approximate unit of $A$ (non-degeneracy),
\item\label{itcartandef1} $D$ is a maximal abelian $^\ast$-subalgebra of $A$ (masa),
\item\label{itcartandef3} the set of \emph{normalisers} of $(D \subset A)$, namely
\[
\mc{N}_A(D)\coloneqq \{ v \in A \colon vDv^* \cup v^*D v \subset D \},
\]
generates $A$ as a $\mathrm{C}^\ast$-algebra (regularity),
\item\label{itcartandef4} there exists a faithful conditional expectation $\Phi\colon A \to D$.
\end{enumerate}
If moreover $(D \subset A)$ has the \emph{unique extension property}, i.e.\ every pure state on $D$ extends uniquely to a pure state on $A$, we will say that $(D \subset A)$ is a diagonal pair (or that $D$ is a $\mathrm{C}^\ast$-\emph{diagonal} of $A$).
\end{definition}

Recall that the conditional expectation in \ref{itcartandef4} is unique \cite[Corollary~5.7]{Ren08}, so we refer to it as \emph{the associated} conditional expectation of the Cartan pair. Moreover, it was recently shown in \cite{Pit21} that \ref{itcartandef1} and \ref{itcartandef3} make condition \ref{itcartandef2} redundant.

Our focus in this work is on diagonal $\mathrm{C}^\ast$-pairs $(D\sub A)$ where $D$ has zero-dimensional spectrum. Such diagonal $\mathrm{C}^*$-pairs are called \textit{ample}. For an element $d\in D$ we will write $\supp(d)$ to denote the \textit{open support} of $d$, i.e.\ the set of points in the spectrum of $D$ on which $d$ does not vanish.  

For a $\mathrm{C}^*$-algebra $A$ and $a,b\in A_+$, we say that $a$ is \textit{Cuntz subequivalent} to $b$ and write $a\precsim b$ when there exists a sequence $(r_n)_{n\ge1}\sub A$ such that $r_nbr_n^*\to a$. Letting $\mathrm{QT}(A)$ denote the space of normalised quasitraces on $A$ \cite[Definition~II.6.8.15]{Bla06}, for $\tau\in\mathrm{QT}(A)$ we define the \textit{dimension function} associated to $\tau$ as the map $d_\tau\colon\bigcup_{n=1}^\infty M_n(A)_+\to[0,\infty)$ given by
\begin{equation*}
d_\tau(a):=\lim_{k\to\infty}\tau(a^{1/k}), \quad a\in\bigcup_{n=1}^\infty M_n(A)_+.
\end{equation*}
Clearly, for a projection $p\in\bigcup_{n=1}^\infty M_n(A)_+$ one has $d_\tau(p)=\tau(p)$. For $a,b\in A_+$, if $a\precsim b$ then $d_\tau(a)\le d_\tau(b)$ and if $a\perp b$ then $d_\tau(a+b)=d_\tau(a)+d_\tau(b)$ \cite{BlaHan82}. 

\begin{definition}{\cite[Definition~1.5]{BosBroSatTikWhiWin19}}\label{def:strictcomp}
For a unital, separable, and simple $\mathrm{C}^*$-algebra $A$ such that $\mathrm{QT}(A)$ is nonempty, we say that $A$ has \textit{strict comparison} when, for all $n\in\bb{N}$ and all $a,b\in M_n(A)_+$, if $d_\tau(a)<d_\tau(b)$ for all $\tau\in\mathrm{QT}(A)$, then $a\precsim b$ in $M_n(A)_+$.
\end{definition}

\begin{remark}\label{rmk:dynamicalsubequivalencenormaliser}
The notion of dynamical subequivalence in topological dynamics \cite[Definition~3.1]{Ker20} admits a straightforward generalisation to ample $\mathrm{C}^\ast$-pairs. To motivate this, note that if $G\acts X$ is a continuous action on a zero-dimensional space and $U, V \subset X$ are clopen sets such that $U$ is partitioned as $U=\bigsqcup_{j=1}^nU_j$ and there are group elements $\{s_j\}_{j=1}^n\subset G$ such that $\bigsqcup_{j=1}^ns_jU_j\subset V$, then the element $w\coloneqq\sum_{j=1}^n\chi_{U_j}u_{s_j}^* \in C(X)\rtimes_\mathrm{r}G$ is a normaliser of $(C(X) \subset C(X)\rtimes_\mathrm{r}G)$ and satisfies $w\chi_Vw^*=\chi_U$. 
 
More generally, for a locally compact, {\'e}tale, Hausdorff, ample groupoid $\mathcal{G}$ with compact unit space $\mathcal{G}^{(0)}$ (see \cite{Sims:Notes} for the details of this setting), if $U, V$ are clopen subsets of the unit space, $U$ is dynamically below $V$ as in \cite[Definition~2.1]{AraBonBosLi23} when there are compact open bisections $W_1 ,\dots, W_n \subset \mathcal{G}$ with $U = \bigsqcup_{j=1}^n s(W_j)$ and $\bigsqcup_{j=1}^nr(W_j) \subset V$, where $s, r \colon \mathcal{G} \to \mathcal{G}^{(0)}$ denote the source and range maps respectively. In this case, the compactly supported function $\mathcal{G} \to \mathbb{C}$ given by $w\coloneqq \sum_{j=1}^n \chi_{W_j}\in C_c(\mathcal{G}) \subset \mathrm{C}^\ast_\mathrm{r}(\mathcal{G})$ is a normaliser of $(C(\mathcal{G}^{(0)}) \subset \mathrm{C}^\ast_\mathrm{r}(\mathcal{G}))$ and satisfies $w^*\chi_V w=\chi_U$. 

These observations naturally lead to the following definition.
\end{remark}

\begin{definition}\label{def:dynbelow}
Let $(D\subset A)$ be a unital, ample $\mathrm{C}^\ast$-pair and let $p,q\in D$ be projections. We say that $p$ is \emph{dynamically below} $q$ and write $p\prec_{(D \sub A)} q$ when there is a normaliser $n\in\mc{N}_A(D)$ such that $p=nqn^*$. Also, we will say that $p$ and $q$ are \emph{dynamically equivalent} and write $p\sim_{(D \sub A)} q$ when $p$ and $q$ are Murray--von Neumann equivalent via a partial isometry in $\mc{N}_A(D)$. 
\end{definition}

Since for a unital diagonal pair $(D \subset A)$ the simplex of \emph{invariant} states (i.e.\ states $\sigma\in \mathrm{S}(D)$ such that $\sigma(n^*n)=\sigma(nn^*)$ for all $n\in\mathcal{N}_A(D)$) is affinely homeomorphic to the trace space $\mathrm{T}(A)$ \cite[Corollary~3.6]{CryNag17}, the following definition is a natural adaptation of dynamical comparison \cite[Definition~3.2, Proposition~3.5]{Ker20} for ample diagonal pairs.

\begin{definition}\label{def:dyncomp}
Let $(D\subset A)$ be a unital, ample diagonal pair with $\mathrm{T}(A)$ nonempty. We say that $(D\subset A)$ has \emph{dynamical comparison} when, for any projections $p,q \in D$, if $\tau(p) < \tau(q)$ for all $\tau\in\mathrm{T}(A)$, then $p \prec_{(D \sub A)} q$.
\end{definition}

\begin{remark}\label{rmk:stabledynamicalcomparison}
In contrast to strict comparison, \Cref{def:dyncomp} only addresses projections in $D$, as opposed to $\bigcup_{n\ge1}(D\otimes D_n)$. Nevertheless, at least when $A$ is assumed to be separable, zero-dimensionality of the spectrum of $D$ automatically implies that for any $n\in\bb{N}$, the pair $(D\otimes D_n \subset A\otimes M_n)$ has dynamical comparison as well. This is not trivial, but the proof is a straightforward adaptation of the argument in the proof of \cite[Proposition~2.10]{AraBonBosLi23} where the analogous statement is proved for the case of pairs arising as $(C(\mc{G}^{(0)})\subset \mathrm{C}^*_\mathrm{r}(\mathcal{G}))$ for a $\sigma$-compact, ample groupoid $\mathcal{G}$ with compact unit space $\mathcal{G}^{(0)}$, so we omit the details.
\end{remark}

In order to introduce the main new concept of this paper, diagonal comparison, we need a version of a comparison relation suited for diagonal pairs.

\begin{definition}\label{def:diagbelow}
Let $(D\subset A)$ be a $\rm{C}^\ast$-diagonal pair and let $\Phi\colon A\to D$ denote the associated conditional expectation. For $a,b\in A_+$, we say that $a$ is \emph{diagonally below} $b$ and write $a\precsimd b$ when for any $\varepsilon>0$ there are elements $u,w,v\in A$ such that
\begin{enumerate}
\item\label{it:defdiagbelow1} $\|a - u w v b v^* w^* u^* \|<\varepsilon$,
\item\label{it:defdiagbelow2} $\supp (\Phi(u^*u)) \subset \supp (\Phi(a))$,
\item\label{it:defdiagbelow3} $\supp (\Phi(vv^*)) \subset \supp (\Phi(b))$,
\item\label{it:defdiagbelow4} $w\in \mc{N}_A(D)$.
\end{enumerate}
\end{definition}

Notice that $\precsimd$ is stronger than the usual Cuntz subequivalence, since for $a \precsimd b$ to hold we are asking that $a$ is Cuntz below $b$ via elements that factorise as a product of three elements as described above. 

\begin{definition}\label{def:diagcomp}
Let $(D \subset A)$	be a unital, ample diagonal pair with $A$ separable, simple and such that $\mathrm{QT}(A)$ is nonempty. We say that $(D \subset A)$ has \emph{diagonal comparison} when for any $n\in\bb{N}$ and any $a,b\in (A\otimes M_n)_+$, if $d_\tau(a) < d_\tau(b)$ for all $\tau \in \mathrm{QT}(A)$, then $a\precsimd b$ in $(D\otimes D_n \subset A \otimes M_n)$. 
\end{definition}

Clearly diagonal comparison of a pair automatically implies strict comparison of the ambient $\rm{C}^\ast$-algebra. We will see that diagonal comparison implies that the pair has dynamical comparison, at least if one assumes that the conditional expectation is \emph{hereditary} in the following sense.

\begin{definition}\label{def:hermap}
Let $(D \subset A)$ be a $\rm{C}^\ast$-pair and let $\Phi\colon A\to D$ be a positive linear map. We say that $\Phi$ is \emph{hereditary} when $a\in \her(\Phi(a))$ for all $a\in A_+$.
\end{definition}

It is obvious that if a positive map is hereditary then it is automatically faithful. This strengthening of faithfulness occurs for the conditional expectations associated to diagonal pairs in a fairly broad range of examples.

\begin{remark}\label{rmk:herexponcrossedprod}
For a continuous action $G\acts X$ of an amenable discrete group on a compact Hausdorff space $X$, the canonical conditional expectation $E\colon C(X)\rtimes_\mathrm{r}G\to C(X)$ is hereditary (cf.\ \cite[Remark~2.3]{RorSie12}). The reason for this is that a F{\o}lner sequence $(F_n)_{n=1}^\infty$ for $G$ with $|F_n|\eqqcolon k_n\in\mathbb{N}$ gives rise to a completely positive approximation property of the form

\[
\begin{tikzcd}
C(X)\rtimes_\mathrm{r}G\ar[ "\psi_n", dr] & & C(X)\rtimes_\mathrm{r}G \\
& C(X) \otimes M_{k_n}\ar["\varphi_n", ur] &
\end{tikzcd}
\]

\noindent with $\varphi_n\psi_n(fu_s) = \frac{|F_n \cap sF_n|}{|F_n|} fu_s$ for all $f\in C(X)$, $s\in G$ \cite[Lemma~4.2.3]{BroOza08}. This composition is thus expressed in terms of the \emph{Fourier coefficients}, i.e.\ the completely bounded maps $E_s\colon C(X)\rtimes_\mathrm{r}G\to C(X)$, $s\in G$, given by $E_s(\cdot)\coloneqq E(\cdot \; u_s^*)$,  since $\varphi_n\psi_n(\cdot) = \sum_{s\in G}\frac{|F_n\cap sF_n|}{|F_n|}E_s(\cdot)u_s$.\footnote{The sum appearing in this equation is actually finite: since $F_n$ is finite, the intersection $F_n\cap sF_n$ can only be nonempty for finitely many $s\in G$.} Now for $a\in (C(X)\rtimes_\mathrm{r}G)_+$, we have that
\begin{equation}\label{eq:rmk}
a = \lim_{n\to\infty} \varphi_n\psi_n(a) = \lim_{n\to \infty} \sum_{s\in G}\frac{|F_n\cap sF_n|}{|F_n|}E_s(a)u_s,
\end{equation}
and so, using the Cauchy--Schwarz inequality for completely positive maps on $E$, it is easy to see that if $a_s\coloneqq E_s(a)u_s$, then $a_s^*a_s\in \her(E(a))$ and $a_sa_s^*\in\her(E(a))$, whence $a_s\in \her(E(a))$. By \eqref{eq:rmk}, $a\in\her(E(a))$ as well, and this shows that $E$ is hereditary.

The same argument can be used to show that the conditional expectation $\mathrm{C}^\ast_\mathrm{red}(\mc{B})\to B_{1_G}$ associated to the $\mathrm{C}^\ast$-algebra of a Fell bundle $\mc{B}=\{B_g\}_{g\in G}$ over an amenable group with commutative unit fiber $B_{1_G}$ and with the approximation property (see \cite[Part~2]{Exe17} for the details of this setting) is hereditary.

\end{remark}

It is an interesting question to what extent having a hereditary conditional expectation is automatic -- this topic is further discussed in \Cref{sec:herexp}. 

\begin{lemma}\label{lem:positive-map-her-inclusion}
Let $ (D \subset A)$ be a $\rm{C}^\ast$-pair with $D$ abelian and let $\Phi \colon A \to D$ be a positive linear map. For $a\in A_+$, we have that $\Phi(\her(a)) \subset \her(\Phi(a))$. 
\end{lemma}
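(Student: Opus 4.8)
The plan is to work in the spectrum of $D$. Writing $D\cong C_0(Y)$, the element $\Phi(a)$ is a nonnegative function on $Y$, and since $D$ is abelian its hereditary subalgebra is precisely the closed ideal $\her(\Phi(a))=\{f\in D : f|_{Z}=0\}$, where $Z\coloneqq\{y\in Y:\Phi(a)(y)=0\}$ is the zero set of $\Phi(a)$. Thus the inclusion $\Phi(\her(a))\subset\her(\Phi(a))$ is equivalent to the assertion that $\Phi(b)$ vanishes on $Z$ for every $b\in\her(a)$. Since $\her(\Phi(a))$ is a norm-closed subspace of $D$, the map $\Phi$ is linear and continuous, and the elements of the form $aca$ with $c\in A$ span a dense subspace of $\her(a)=\overline{aAa}$, it suffices to check that $\Phi(aca)(y)=0$ for every $c\in A$ and every $y\in Z$.

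Fix $y\in Z$ and consider the linear functional $\phi_y\coloneqq\ev_y\circ\Phi\colon A\to\mathbb{C}$, which is positive as a composition of the positive map $\Phi$ with the character $\ev_y$ on $D$. By construction $\phi_y(a)=\Phi(a)(y)=0$. The key step is to apply the Cauchy--Schwarz inequality for the positive functional $\phi_y$: for every $z\in A$,
\[
|\phi_y(a^{1/2}z)|^2\le \phi_y(a)\,\phi_y(z^*z)=0,
\]
so that $\phi_y(a^{1/2}z)=0$ for all $z\in A$. Taking $z=a^{1/2}ca$ then yields $\phi_y(aca)=\phi_y(a^{1/2}\cdot a^{1/2}ca)=0$, that is $\Phi(aca)(y)=0$, which is exactly what was needed.

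Combining the two observations, $\Phi(aca)$ vanishes on $Z$ for every $c\in A$ and hence lies in $\her(\Phi(a))$; by density of $\{aca : c\in A\}$ in $\her(a)$, continuity of $\Phi$, and closedness of $\her(\Phi(a))$, the same holds for every element of $\her(a)$, proving the inclusion. I do not expect any serious obstacle here; the only point requiring care is the identification of $\her(\Phi(a))$ with the functions vanishing on the zero set of $\Phi(a)$, which uses that $D$ is abelian so that hereditary subalgebras coincide with closed ideals and correspond to open subsets of $Y$. Note that positivity of $\Phi$ alone suffices, via the scalar Cauchy--Schwarz inequality applied to the evaluation functionals $\phi_y$, so no appeal to (complete) positivity of $\Phi$ as a map is required.
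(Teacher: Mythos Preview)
Your proof is correct and takes a genuinely different route from the paper. The paper first invokes the Cohen--Hewitt factorisation theorem to write any $b\in\her(a)_+$ as $b^{1/2}=ca_0$ with $a_0\in\mathrm{C}^\ast(a)$, reducing to the case $b\in\mathrm{C}^\ast(a)_+$; it then approximates $b$ by elements $f(a)$ with $0\le f(t)\le\lambda t$, so that $0\le\Phi(b)\le\lambda\Phi(a)$ forces $\Phi(b)\in\her(\Phi(a))$ by the standard order-hereditary fact. Your argument instead passes to the Gelfand spectrum of $D$, identifies $\her(\Phi(a))$ with the ideal of functions vanishing on the zero set $Z$ of $\Phi(a)$, and then applies the scalar Cauchy--Schwarz inequality to the positive functionals $\ev_y\circ\Phi$ for $y\in Z$, combined with the density of $aAa$ in $\her(a)$. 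Your approach is arguably more elementary in that it avoids Cohen--Hewitt; on the other hand, the paper's order-theoretic argument never actually uses commutativity of $D$ (the implication $0\le x\le y\Rightarrow x\in\her(y)$ holds in any $\mathrm{C}^\ast$-algebra), so it yields the lemma for arbitrary $D$, whereas your pointwise method genuinely relies on the Gelfand picture and hence on $D$ being abelian.
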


\begin{proof}
Let $b\in\her(a)_+$. Since $\her(a)$ is a right $\mathrm{C}^\ast(a)$-module, by the Cohen--Hewitt factorisation theorem there are $a_0\in\mathrm{C}^\ast(a)$ and $c\in A$ such that $b^{1/2}=ca_0$ and therefore $0\le \Phi(b) \le \|c\|^2 \Phi( a_0^*a_0)$, so without loss of generality we may assume that $b \in \mathrm{C}^\ast(a)_+$. Since $\Phi$ is continuous and $b$ is positive, it suffices to prove that $b\in\her(\Phi(a))$ assuming that $b=f(a)$ where $f\in C_0(\sigma(a))$ is the positive part of a polynomial with zero constant term. For such a function $f$, there is a constant $\lambda>0$ such that $f(t)\le \lambda t$ for all $t\in\sigma(a)$, whence $0 \le b \le \lambda a$ and therefore positivity of $\Phi$ implies that $\Phi(b)\in\her(\Phi(a))$ as we wanted.
\end{proof}

\begin{lemma}\label{lem:herdiagbelow}
Let $(D \subset A)$ be a diagonal pair with hereditary associated conditional expectation $\Phi\colon A\to D$ and let $a,b\in A_+$. Then, $a\precsimd b$ if and only if for any $\varepsilon>0$ there exist $u,w,v\in A$ such that
\begin{enumerate}[label=\normalfont(\roman*)]
\item\label{it:lemherdbelow1} $\|a - u w v b v^* w^* u^*\|<\varepsilon$,
\item\label{it:lemherdbelow2} $u \in \her(\Phi(a))$,
\item\label{it:lemherdbelow3} $v \in \her(\Phi(b))$,
\item\label{it:lemherdbelow4} $w \in \mc{N}_A(D)$.
\end{enumerate}
\end{lemma}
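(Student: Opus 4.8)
The plan is to prove the two implications separately, with the forward direction (from $a\precsimd b$ to the hereditary characterisation) carrying essentially all the weight; the reverse is a bookkeeping exercise with open supports. First I would record two elementary facts used throughout. For $d\in D_+$, since $D$ is abelian the hereditary subalgebra $\overline{dDd}$ of $D$ equals $\{f\in D:\supp(f)\subset\supp(d)\}$, and it is contained in $\her(d)=\overline{dAd}$. Moreover, because $\Phi$ is a conditional expectation, hence a $D$-bimodule map, it satisfies $\Phi(\her(d))\subset\overline{dDd}$: any $x\in\her(d)$ is a limit of elements $dy_nd$, whence $\Phi(x)=\lim d\,\Phi(y_n)\,d\in\overline{dDd}$. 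Combining the two, for $x\in\her(d)_+$ one gets $\supp(\Phi(x))\subset\supp(d)$.

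For the reverse implication I would assume $u,w,v$ satisfy \ref{it:lemherdbelow1}--\ref{it:lemherdbelow4}. Since $u^*u\in\her(\Phi(a))$ and $vv^*\in\her(\Phi(b))$ (a hereditary subalgebra is closed under $x\mapsto x^*x$ and $x\mapsto xx^*$), the facts above instantly give $\supp(\Phi(u^*u))\subset\supp(\Phi(a))$ and $\supp(\Phi(vv^*))\subset\supp(\Phi(b))$, which are conditions \ref{it:defdiagbelow2} and \ref{it:defdiagbelow3} of \Cref{def:diagbelow}; the norm estimate and the normaliser condition are retained verbatim. I note that this direction does not use hereditarity of $\Phi$.

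For the forward implication I would fix $\varepsilon>0$ and apply $a\precsimd b$ with tolerance $\varepsilon/3$ to obtain $u,w,v$ with $\|a-uwvbv^*w^*u^*\|<\varepsilon/3$, the support conditions, and $w\in\mc{N}_A(D)$. The support condition on $u$ reads $\Phi(u^*u)\in\overline{\Phi(a)D\Phi(a)}\subset\her(\Phi(a))$; since $\Phi$ is hereditary, $u^*u\in\her(\Phi(u^*u))\subset\her(\Phi(a))$, and dually $vv^*\in\her(\Phi(b))$. The main obstacle appears precisely here: $u^*u\in\her(\Phi(a))$ does \emph{not} yield $u\in\her(\Phi(a))$, but only $u\in\overline{A\,\Phi(a)}$ (and symmetrically $v\in\overline{\Phi(b)\,A}$), because a hereditary subalgebra is not an ideal. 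The remedy is to compress on the correct side. I would choose approximate-unit elements $g=\phi(\Phi(a))$ and $k=\psi(\Phi(b))$ of $\her(\Phi(a))$ and $\her(\Phi(b))$ (with $\phi(0)=\psi(0)=0$ and $0\le\phi,\psi\le1$), and set $u'\coloneqq gu$, $v'\coloneqq vk$, $w'\coloneqq w$. Since $g\in\overline{\Phi(a)A\Phi(a)}$ and $u\in\overline{A\,\Phi(a)}$, the product $u'=gu$ carries a factor of $\Phi(a)$ on both sides, so $u'\in\overline{\Phi(a)A\Phi(a)}=\her(\Phi(a))$; likewise $v'=vk\in\her(\Phi(b))$. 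This secures conditions \ref{it:lemherdbelow2} and \ref{it:lemherdbelow3}.

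It remains to verify the norm estimate, where the order of the choices is decisive. By hereditarity of $\Phi$ we have $a\in\her(\Phi(a))$ and $b\in\her(\Phi(b))$, so $g,k$ can be taken with $gag\approx a$ and $kbk\approx b$ to any prescribed precision; crucially, $u,w,v$ are already fixed before $g,k$ are selected, so the factors $\|u\|^2\|w\|^2\|v\|^2$ that multiply $\|kbk-b\|$ are harmless constants. Concretely, writing the target as $u'w'v'bv'^*w'^*u'^*=g\,[\,uwv(kbk)v^*w^*u^*\,]\,g$, I would first pick $k$ with $\|uwv(kbk-b)v^*w^*u^*\|<\varepsilon/3$, so that $\|a-uwv'bv'^*w^*u^*\|<2\varepsilon/3$, and then pick $g$ with $\|gag-a\|<\varepsilon/3$; since $\|g\|\le1$, the triangle inequality gives $\|a-g\,(uwv'bv'^*w^*u^*)\,g\|<\varepsilon$, which is \ref{it:lemherdbelow1}. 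This completes both directions.
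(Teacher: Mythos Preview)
Your proof is correct and follows essentially the same strategy as the paper's. Both arguments handle the reverse implication by the $D$-bimodule property of $\Phi$ (the paper packages this as \Cref{lem:positive-map-her-inclusion}), and for the forward direction both compress the original $u_0,v_0$ on the appropriate sides by approximate-unit elements of the relevant hereditary subalgebras; the only cosmetic difference is that the paper uses $a^{1/m}$ and $b^{1/m}$ (elements of $\her(a)\subset\her(\Phi(a))$ and $\her(b)\subset\her(\Phi(b))$) with a single parameter $m$, whereas you use $g=\phi(\Phi(a))$ and $k=\psi(\Phi(b))$ chosen sequentially. Your version is a touch more explicit about why the norm estimate survives the compression, but the underlying mechanism is identical.
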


\begin{proof}
In view of \Cref{lem:positive-map-her-inclusion}, the reverse implication is immediate. Assume now that $a \precsimd b$ and let $\varepsilon>0$. There are $u_0,w,v_0\in A$ with $\supp(\Phi(u_0^*u_0))\sub\supp(\Phi(a))$, $\supp(\Phi(v_0v_0^*))\sub\supp(\Phi(b))$ and $w\in\mc{N}_A(D)$ such that $\|a-u_0wv_0bv_0^*w^*u_0^*\|<\varepsilon/2$. Since $\lim_{m\to\infty}c^{1+\frac{1}{m}}=c$ for all $c\in A_+$, we can take $m\in\bb{N}$ large enough so that
\begin{equation*}
\|a-a^\frac{1}{m}u_0wv_0b^{1+\frac{2}{m}}v_0^*w^*u_0^*a^\frac{1}{m}\|<\varepsilon.
\end{equation*}
Set $u:=a^\frac{1}{m}u_0$ and $v:=v_0b^\frac{1}{m}$. It is clear that the elements $u,w,v$ satisfy properties \ref{it:lemherdbelow1} and \ref{it:lemherdbelow4}. Invoking that $\Phi$ is hereditary and by another application of \Cref{lem:positive-map-her-inclusion}, we see that $u^*u, uu^* \in \her(\Phi(a))$ and $vv^*, v^*v \in \her(\Phi(b))$, and so \ref{it:lemherdbelow2} and \ref{it:lemherdbelow3} follow.
\end{proof}

\begin{proposition}\label{prop:diagcomptodyncomp}
Let $(D\subset A)$ be a unital, ample diagonal pair with $A$ separable, simple and such that $\mathrm{QT}(A)=\mathrm{T}(A)$ is nonempty. Assume that the associated conditional expectation is hereditary. If $(D\subset A)$ has diagonal comparison, then $(D\subset A)$ has dynamical comparison.
\end{proposition}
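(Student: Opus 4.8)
The plan is to feed the hypothesis into diagonal comparison, extract from the resulting $\precsimd$-relation a partial isometry that factors through a \emph{single} normaliser, and then upgrade this to an honest normaliser-implemented subequivalence of projections in $D$, the last step being where zero-dimensionality does the real work. First I would take projections $p,q\in D$ with $\tau(p)<\tau(q)$ for all $\tau\in\mathrm{T}(A)$. Since $\mathrm{QT}(A)=\mathrm{T}(A)$ and $p,q$ are projections, $d_\tau(p)=\tau(p)<\tau(q)=d_\tau(q)$ for every $\tau\in\mathrm{QT}(A)$, so diagonal comparison in the unamplified case gives $p\precsimd q$. As $\Phi(p)=p$ and $\Phi(q)=q$ we have $\her(\Phi(p))=pAp$ and $\her(\Phi(q))=qAq$, so \Cref{lem:herdiagbelow} furnishes, for each $\varepsilon>0$, elements $u\in pAp$, $v\in qAq$ and $w\in\mc{N}_A(D)$ with $\|p-uw(vv^*)w^*u^*\|<\varepsilon$ (using $vqv^*=vv^*$, as $v=vq$).

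Next I would pass to a partial isometry. Writing $y:=uwv\in pAq$, the positive element $c:=yy^*=uw(vv^*)w^*u^*$ lies in the corner $pAp$ and satisfies $c\ge(1-\varepsilon)p$ for $\varepsilon<1$, hence is invertible there; then $s:=c^{-1/2}y$ is a partial isometry with $ss^*=p$ and $s^*s=:e\le q$ a projection. The crucial structural feature is that $s=(c^{-1/2}u)\,w\,v$ factors through the single normaliser $w$, so all of its dynamical content is carried by $w$, while $c^{-1/2}u\in pAp$ and $v\in qAq$ are merely corner terms. Note, however, that $s$ is not itself a normaliser and $e\notin D$, so this only records that $p\sim_{\mathrm{MvN}}e\le q$ in the ordinary sense.

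The hard part, which I expect to be the main obstacle, is converting this normaliser-sandwiched equivalence into a genuine dynamical subequivalence of the clopen supports $U:=\supp(p)$ and $V:=\supp(q)$, since stripping $u$ and $v$ naively would discard the normaliser and leave only Cuntz subequivalence. Here I would use that the pair is ample. By regularity I approximate $c^{-1/2}u\in pAp$ and $v\in qAq$ in norm by finite linear combinations of normalisers lying in the respective corners (cutting a normaliser by $p,q\in D$ again yields a normaliser, and $\mc{N}_A(D)$ is closed under products), so that $y$ is approximated by a finite sum $\sum_\alpha\lambda_\alpha n_\alpha$ with each $n_\alpha=m_\alpha w l_\alpha\in pAq$ a normaliser. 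Each $n_\alpha$ is then cut down, via functional calculus on $n_\alpha^*n_\alpha\in D$ and the abundance of clopen sets, to partial-isometry normalisers with source and range projections in $D$ — sources being clopen subsets of $V$ (as $n_\alpha\in pAq$ forces $\supp(n_\alpha^*n_\alpha)\sub V$) and ranges clopen subsets of $U$. Disjointifying the finitely many resulting bisections over the totally disconnected $\hat D$ produces partial-isometry normalisers $n_1,\dots,n_k$ with pairwise orthogonal sources $e_j\in D$, $\sum_j e_j\le q$, and pairwise orthogonal ranges $f_j\in D$, $\sum_j f_j\le p$; their sum $n:=\sum_j n_j$ is a single normaliser (the cross terms vanish by orthogonality of the $e_j$ and of the $f_j$) with $nn^*=\sum_j f_j$ and $n^*n\le q$.

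It then remains to absorb the remainder $p-\sum_jf_j\in D$, whose trace is uniformly small once $\varepsilon$ is small. Exploiting the compactness gap $\gamma>0$ with $\tau(q)-\tau(p)\ge\gamma$ for all $\tau$, together with strict comparison of $A$ (which diagonal comparison supplies) and the zero-dimensional divisibility of \Cref{intro thm tad}, I would feed this small remainder into the still-unused part of $q$ and iterate, obtaining in the limit a normaliser $n$ with $nn^*=p$ and $n^*n\le q$. Then $nqn^*=nn^*=p$, i.e.\ $p\prec_{(D\sub A)}q$, which is dynamical comparison. The genuinely delicate points are the clopen cutting and disjointification of the approximating normalisers and the remainder-absorption; the trace gap $\gamma$ and strict comparison are precisely what provide the room to carry these out exactly rather than merely approximately.
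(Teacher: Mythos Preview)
Your opening is correct and matches the paper: apply diagonal comparison to $p,q$, then use \Cref{lem:herdiagbelow} to obtain $u\in pAp$, $v\in qAq$, $w\in\mc{N}_A(D)$ with $\|p-uwvqv^*w^*u^*\|<1$. But from this point you overlook the decisive simplification and launch a far more elaborate programme.

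The observation you miss is that $wqw^*$ already lies in $D$, simply because $w$ is a normaliser and $q\in D$. The paper exploits this directly: the element $uwvqv^*w^*u^*$ is positive and invertible in $pAp$; since $vqv^*\le\|v\|^2q$, so is $uwqw^*u^*$, and hence $uu^*$. Stable finiteness of $pAp$ (a hereditary subalgebra of the simple, stably finite $A$) then forces $u$ itself to be invertible in $pAp$, whence $pwqw^*p$ is invertible there. But $wqw^*\in D_+$, so invertibility of $p\,wqw^*$ in $pAp$ just says that $wqw^*$ is bounded away from zero on the clopen set $\supp(p)$. One then picks $h\in D_+$ with $h\,wqw^*\,h=p$ and sets $\tilde w\coloneqq hw\in\mc{N}_A(D)$, obtaining $\tilde wq\tilde w^*=p$. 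No approximation, no disjointification, no iteration.

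Your ``hard part'' is therefore unnecessary, and as written it also has genuine gaps. A norm approximation of $s$ (or of $c^{-1/2}u$ and $v$) by finite sums of normalisers gives no control over the \emph{union} of the supports of the individual summands; after cutting each $n_\alpha$ to a partial-isometry normaliser and disjointifying, there is no reason the ranges $\sum_jf_j$ should be tracially close to $p$---the cross terms you discard carry essential information. The iteration sketch is likewise incomplete: you do not produce a convergent sequence of normalisers whose limit is again a normaliser, nor do you explain how strict comparison or \Cref{intro thm tad} would close the loop. In short, the detour through approximation-by-normalisers is both avoidable and, as sketched, not sound; the entire argument collapses to a two-line invertibility computation once you use that $wqw^*\in D$.
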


\begin{proof}
Let $p,q \in D$ be projections with $\tau(p) < \tau(q)$ for all $\tau\in\mathrm{T}(A)$. If $p = 0$ there is nothing to show, so we assume that $p \ne 0$. By diagonal comparison we have $p \precsimd q$, so by \Cref{lem:herdiagbelow} there are $u\in\her(p)$, $v\in\her(q)$ and $w\in\mc{N}_A(D)$ such that 
\begin{equation*}
\|p - u w v q v^* w^* u^* \| < 1,
\end{equation*}
In particular, $u, w, v$ are non-zero, and $u w v q v^* w^* u^* \in \her(p)$ is a positive element that is invertible in $\her(p)$. Moreover, since $v \in \her(q)$, we have $vqv^* \le \|v\|^2 q$, and so $ u w q w^* u^* $ is invertible in $\her(p)$, and since $0\le uwqw^*u^* \le \|w\|^2 uu^*$, $uu^*$ is also invertible in $\her(p)$. Since $\her(p)$ is stably finite as a hereditary subalgebra of $A$, right invertibility of $u$ implies that $u$ itself is invertible in $\her(p)$, and thus so is $pwqw^*p$. Now $D$ is abelian and by invertibility of $pwqw^*p$ in $\her(p)$ it follows that $wqw^*\in D$ does not vanish on the support of $p$ which is a clopen set, whence there is $h\in D$ such that $hwqw^*h=p$. Setting $\tilde{w}:=hw\in\mc{N}_A(D)$ we have that $p=\tilde{w}q\tilde{w}^*$, proving that $p \prec_{(D \sub A)} q$ as we wanted.
\end{proof}

Note that at this point we have already shown the implication \ref{itm2intromainthm}$\implies$\ref{itm1intromainthm} of \Cref{intro main thm}.

\begin{remark}\label{rmk:transitivity}
We note that the relation $\precsimd$ is not designed to be a priori
transitive. However, in the presence of diagonal comparison, the combination of $\precsimd$ with strict subequality
on all dimension functions, indeed yields a transitive relation. It remains an interesting technical problem whether the
relation $\precsimd$ alone can fail to be transitive.
\end{remark}

\section{Tracial almost divisibility}
\noindent In order to establish that the concurrence of dynamical comparison of an ample diagonal pair $(D \subset A)$ together with strict comparison of $A$ imply that $(D\subset A)$ has diagonal comparison (i.e.\ \ref{itm1intromainthm}$\implies$\ref{itm2intromainthm} of \Cref{intro main thm}), it will be necessary that we are able to partition any given projection $p \in D$ into subprojections that, uniformly over $\mathrm{T}(A)$, approximately preserve in trace a prescribed portion of the trace of $p$. 

We will use $\mathrm{Aff}(\mathrm{T}(A))$ to denote the linear space of weak$^\ast$-continuous affine maps $\mathrm{T}(A) \to \bb{C}$ and $\mathrm{Aff}(\mathrm{T}(A))_+^1$ will denote the subset of those maps with values in $[0,1]$. For $f,g\in\mathrm{Aff}(\mathrm{T}(A))$ and $\varepsilon>0$, we write $f\approx_\varepsilon g$ when $|f(\tau)-g(\tau)|<\varepsilon$ for all $\tau\in\mathrm{T}(A)$. For an element $a\in A_+$, we use $\hat{a}$ to denote the affine continuous map $\mathrm{T}(A)\ni\tau\mapsto\tau(a)\in[0,\|a\|]$.

\begin{definition}\label{def:tracialalmostdivisibility}
Let $(D \subset A)$ be a unital, ample diagonal pair with $\mathrm{T}(A)$ nonempty. We say that $(D \subset A)$ has \emph{tracial almost divisibility} when for any projection $p \in D$, any $n \in \bb{N}$ and any $\varepsilon>0$ there exist projections $p_1, \dots ,p_n \in D$ such that
\begin{enumerate}
\item $\sum_{j=1}^n p_j = p$, 
\item $|\tau(p_j) - \frac{1}{n} \tau (p) | < \varepsilon$ for all $\tau\in \mathrm{T}(A)$, $j=1,\dots,n$.
\end{enumerate}
\end{definition}

\begin{lemma}\label{lem:tadequivalents}
Let $(D \subset A)$ be a unital, ample diagonal pair with $\mathrm{T}(A)$ nonempty. The following are equivalent:
\begin{enumerate}[label=\normalfont(\roman*)]
\item\label{it:tadequiv1} $(D \subset A)$ has tracial almost divisibility.
\item\label{it:tadequiv2} For any projection $p \in D$ and any $\varepsilon>0$ there exists $p'\in D$, a subprojection of $p$, such that $|\tau(p') - \frac12\tau(p)|<\varepsilon$ for all $\tau \in \mathrm{T}(A)$.
\item\label{it:tadequiv3} For any projection $p \in D$, any $\lambda\in[0,1]$ and any $\varepsilon>0$ there exists $p'\in D$, a subprojection of $p$, such that $|\tau(p') - \lambda \tau(p)|<\varepsilon$ for all $\tau \in \mathrm{T}(A)$.
\end{enumerate}
\end{lemma}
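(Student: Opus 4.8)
The plan is to establish the cyclic chain of implications \ref{it:tadequiv1} $\Rightarrow$ \ref{it:tadequiv2} $\Rightarrow$ \ref{it:tadequiv3} $\Rightarrow$ \ref{it:tadequiv1}. Throughout I would use that $D$ is abelian, so that whenever $p' \le p$ are projections in $D$ their difference $p - p'$ is again a projection in $D$, and that each normalised trace satisfies $\tau(p) \le 1$. The first implication is immediate: applying \ref{it:tadequiv1} with $n = 2$ to a projection $p$ produces $p_1, p_2 \in D$ with $p_1 + p_2 = p$ and $|\tau(p_1) - \tfrac12\tau(p)| < \varepsilon$ for all $\tau \in \mathrm{T}(A)$, so $p' := p_1 \le p$ is the required approximate halving.

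For \ref{it:tadequiv2} $\Rightarrow$ \ref{it:tadequiv3} the idea is to approximate $\lambda$ by a dyadic rational and to realise that fraction of $p$ by iterated halving. First I would fix $N \in \bb{N}$ with $2^{-N} < \varepsilon/2$ and set $m := \lfloor \lambda 2^N \rfloor$, so that $|m/2^N - \lambda| \le 2^{-N} < \varepsilon/2$ and $0 \le m \le 2^N$. Then, using \ref{it:tadequiv2} repeatedly, I would build by induction on $k = 0, 1, \dots, N$ a partition of $p$ into $2^k$ projections in $D$ indexed by binary strings of length $k$: at each step I halve every current block $p_s$ into $p_{s0} \le p_s$ (supplied by \ref{it:tadequiv2}) and $p_{s1} := p_s - p_{s0}$, each deviating from $\tfrac12\tau(p_s)$ by less than a prescribed tolerance $\delta$. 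A telescoping estimate $\eta_{k} \le \delta + \tfrac12\eta_{k-1}$ for the deviations $\eta_k$ then yields that the final $2^N$ blocks satisfy $|\tau(p_t) - 2^{-N}\tau(p)| < 2\delta$ for all $\tau$. Summing any $m$ of these blocks gives a subprojection $p' \le p$ with $|\tau(p') - (m/2^N)\tau(p)| < 2^N\cdot 2\delta$; choosing $\delta < \varepsilon\,2^{-(N+2)}$ at the outset and combining with the dyadic approximation of $\lambda$ (and $\tau(p)\le 1$) delivers $|\tau(p') - \lambda\tau(p)| < \varepsilon$.

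For \ref{it:tadequiv3} $\Rightarrow$ \ref{it:tadequiv1} I would peel off the blocks greedily. Given $p$, $n$ and $\varepsilon$, set $q_0 := p$ and, for $k = 1, \dots, n-1$, apply \ref{it:tadequiv3} to $q_{k-1}$ with $\lambda = \tfrac{1}{n-k+1}$ to obtain a subprojection $p_k \le q_{k-1}$ with $|\tau(p_k) - \tfrac{1}{n-k+1}\tau(q_{k-1})| < \delta$, and put $q_k := q_{k-1} - p_k$; finally set $p_n := q_{n-1}$. By construction $\sum_{j=1}^n p_j = p$. Since $\tau(q_{k-1})$ is close to $\tfrac{n-k+1}{n}\tau(p)$ up to an error controlled inductively, the fraction $\tfrac{1}{n-k+1}\tau(q_{k-1})$ is close to $\tfrac1n\tau(p)$, and hence so is $\tau(p_k)$; the accumulated deviation over the at most $n$ steps is bounded by $C_n\,\delta$ for a constant $C_n$ depending only on $n$, so a sufficiently small $\delta$ forces every $|\tau(p_j) - \tfrac1n\tau(p)|$ below $\varepsilon$.

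The one point requiring care throughout is the bookkeeping of the accumulated trace errors, uniformly over $\mathrm{T}(A)$. This is not a genuine obstacle, however: in both the second and third implications the relevant combinatorial parameter (the number of halvings $N$, respectively the number of pieces $n$) is fixed in advance, so once the per-step tolerance $\delta$ is chosen small enough relative to it, all deviations stay below $\varepsilon$.
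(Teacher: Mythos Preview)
Your proposal is correct and follows essentially the same route as the paper: the same cyclic chain (i)$\Rightarrow$(ii)$\Rightarrow$(iii)$\Rightarrow$(i), with (ii)$\Rightarrow$(iii) via dyadic approximation of $\lambda$ and iterated halving (the paper's error estimate $\eta=\sum_{j=0}^{k-1}2^{-j}\delta<2\delta$ is exactly your telescoping bound), and (iii)$\Rightarrow$(i) via the same greedy peeling with $\lambda=\tfrac{1}{n-k+1}$ at step $k$. The only difference is that the paper tracks the constants explicitly (e.g.\ $\delta=\varepsilon/(3m)$ in the second implication and $\delta=2^{-n+1}\varepsilon$ in the third), whereas you invoke a generic $C_n\delta$ bound; both are fine.
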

\begin{proof}
\ref{it:tadequiv1}$\implies$\ref{it:tadequiv2} is trivial. 

\ref{it:tadequiv2}$\implies$\ref{it:tadequiv3}. Let $p\in D$ be a projection and let $\lambda\in[0,1]$, $\varepsilon>0$ be given. Find $k \in \bb{N}$ and $1 \le m \le 2^k$ such that $|\lambda - \frac{m}{2^k}|<\varepsilon/3$. Set $\delta\coloneqq \varepsilon/(3m)$. By \ref{it:tadequiv2}, there is $p_0\in D$ a subprojection of $p$ such that $\hat{p}_0\approx_\delta\frac{1}{2}\hat{p}$ and note that $p_1 \coloneqq p-p_0$ also satisfies $\hat{p}_1 \approx_\delta \frac{1}{2}\hat{p}$ and $p_0+p_1=p$. By iterated applications of \ref{it:tadequiv2}, each time on the subprojections obtained in the previous step, after $k$ steps we obtain $2^k$ projections $q_1,\dots, q_{2^k} \in D$ such that $\sum_{j=1}^{2^k}q_j=p$ and $\hat{q}_j\approx_\eta\frac{1}{2^k}\hat{p}$, where $\eta\coloneqq \sum_{j=0}^{k-1}\frac{1}{2^j}\delta < 2\delta$. Set $p'\coloneqq \sum_{j=1}^mq_j$. Then $\hat{p}' = \sum_{j=1}^m\hat{q}_j \approx_{m\eta} \frac{m}{2^k}\hat{p}\approx_{\varepsilon/3}\lambda\hat{p}$ and since $m\eta +\varepsilon/3 <\varepsilon$, we obtain $\hat{p}' \approx_\varepsilon\lambda\hat{p}$.

\ref{it:tadequiv3}$\implies$\ref{it:tadequiv1}. Let $\varepsilon>0$, let $n\in\bb{N}$ and let $p\in D$ be a projection. Set $\delta\coloneqq 2^{-n+1}\varepsilon$. By \ref{it:tadequiv3}, there is $p_1\in D$ a subprojection of $p$ such that $\hat{p}_1 \approx_\delta \frac1n\hat{p}$. Again by \ref{it:tadequiv3}, $p-p_1$ has a subprojection $p_2 \in D$ such that $\hat{p}_2 \approx_\delta \frac{1}{n-1}\widehat{p - p_1}$. Since $\widehat{p-p_1}\approx_\delta\frac{n-1}{n}\hat{p}$, we have $\hat{p}_2\approx_{2\delta}\frac{1}{n}\hat{p}$. Continuing in this fashion, we obtain $p_1,\dots,p_{n-1}\in D$ that are mutually orthogonal subprojections of $p$ and such that $\hat{p}_j\approx_{2^{j-1}\delta}\frac{1}{n}\hat{p}$. Setting $p_n\coloneqq p - \sum_{j=1}^{n-1}p_j$, we have $\hat{p}_n\approx_{(2^{n-1}-1)\delta}\frac{1}{n}\hat{p}$, and so $\hat{p}_j\approx_\varepsilon\hat{p}$ for all $j=1,\dots,n$ as we wanted.
\end{proof}

\begin{proposition}\label{prop:orderedprojs}
Let $(D \subset A)$ be a unital, ample diagonal pair with $A$ simple, and let $p, q\in D$ be projections with $q\ne0$. Then, there exist projections $p_1,\dots,p_n\in D$ such that $p=\sum_{j=1}^np_j$ and $p_j \prec_{(D \sub A)} q$ for all $j=1,\dots,n$. Moreover, we can arrange that $p_1 \prec_{(D \sub A)} \dots \prec_{(D \sub A)} p_n \prec_{(D \sub A)} q$.
\end{proposition}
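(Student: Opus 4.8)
The plan is to recast everything in terms of the normaliser pseudogroup and then build the chain by a \textbf{stacking-by-multiplicity} device. Since the pair is ample, $D\cong C(Y)$ for a compact totally disconnected $Y$, projections in $D$ correspond to clopen subsets, and by the unique extension property each partial-isometry normaliser $v\in\mc{N}_A(D)$ implements a homeomorphism between the clopen sets $\supp(v^*v)$ and $\supp(vv^*)$. I would first record two elementary facts about $\prec_{(D\sub A)}$ restricted to projections of $D$. \emph{(a)} Via polar decomposition, $p\prec_{(D\sub A)}q$ holds iff $p\sim_{(D\sub A)}q'$ for some clopen subprojection $q'\le q$: if $p=nqn^*$ with $n\in\mc{N}_A(D)$, then $|nq|=(qn^*n)^{1/2}\in D$, so the polar part $u$ of $nq$ lies in $\mc{N}_A(D)$ and satisfies $p=uu^*$, $u^*u\le q$. \emph{(b)} Consequently $\prec_{(D\sub A)}$ is transitive and is preserved under passing to clopen subprojections of the left side and clopen superprojections of the right side; moreover a sum $v=\sum_l v_l$ of partial-isometry normalisers with pairwise orthogonal source projections \emph{and} pairwise orthogonal range projections is again a normaliser, since orthogonality of domains (resp.\ ranges) kills the off-diagonal terms in $vDv^*$ (resp.\ $v^*Dv$).

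Next I need minimality, which is where simplicity enters. For a nonempty clopen $V$ the set $\mathrm{Sat}(V)=\bigcup\{\supp(vv^*): v\in\mc{N}_A(D),\ \supp(v^*v)\cap V\neq\emptyset\}$ is open and invariant under the pseudogroup; were it proper, the associated ideal $\overline{A\,C_0(\mathrm{Sat}(V))\,A}$ would be a proper nonzero ideal of $A$ (it meets $D$ in $C_0(\mathrm{Sat}(V))$ because $\Phi$ is faithful), contradicting simplicity. Hence $\mathrm{Sat}(V)=Y$, and combining this with zero-dimensionality, every $y\in Y$ admits a clopen neighbourhood $O_y$ with $O_y\prec_{(D\sub A)}q$, where $V\coloneqq\supp(q)$. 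The clopen sets $\{O_y\}$ cover the compact clopen set $\supp(p)$; extracting a finite subcover and disjointifying inside $\supp(p)$ produces a clopen partition $\supp(p)=\bigsqcup_{i=1}^N E_i$ with $E_i\sim_{(D\sub A)}S_i$ for clopen $S_i\sub V$. Taking $p_i\coloneqq\chi_{E_i}$ already settles the first assertion.

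For the chain I would stack the pieces over $V$ according to multiplicity. For $y\in V$ set $\mathrm{mult}(y)=\#\{i:y\in S_i\}$, a locally constant function, and $Q_k\coloneqq\{y\in V:\mathrm{mult}(y)\ge k\}$, so that $V\supseteq Q_1\supseteq\cdots\supseteq Q_M$ is a nested chain of clopen sets. For $y\in S_i$ put $r_i(y)=\#\{i'\le i:y\in S_{i'}\}$ (locally constant); then, for each fixed $k$, the clopen sets $\{y\in S_i:r_i(y)=k\}$ partition $Q_k$ as $i$ varies. Transporting these pieces of $S_i$ through the equivalences $E_i\sim_{(D\sub A)}S_i$ refines the partition to $\supp(p)=\bigsqcup_{i,k}E_i^{(k)}$; set $\tilde p_k\coloneqq\sum_i\chi_{E_i^{(k)}}$. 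For fixed $k$ the witnessing normalisers have pairwise orthogonal sources (summing to $Q_k$) and pairwise orthogonal ranges $E_i^{(k)}$, so fact \emph{(b)} combines them into a single normaliser, giving $\tilde p_k\sim_{(D\sub A)}Q_k$. Since the $Q_k$ decrease, $\tilde p_M\prec_{(D\sub A)}\cdots\prec_{(D\sub A)}\tilde p_1\prec_{(D\sub A)}q$, and relabelling $p_k\coloneqq\tilde p_{M+1-k}$ yields $\sum_k p_k=p$ together with the required chain.

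The genuinely nontrivial step is this last one. The naive idea of merely reordering or inserting the unordered pieces fails, because two clopen sets that are each $\prec_{(D\sub A)}q$ need not be comparable with one another (disjoint clopen subsets of $V$ are the obvious obstruction). The stacking device circumvents incomparability by realising the chain not through the pieces themselves but through the nested tower $Q_1\supseteq\cdots\supseteq Q_M$ sitting inside $V$, which is exactly what renders consecutive levels comparable through a single combined normaliser. The two inputs that must be justified carefully are therefore minimality from simplicity (the open-invariant-set/ideal correspondence for diagonal pairs) and the closure of normalisers under sums with orthogonal domains and ranges; granting these, the multiplicity bookkeeping is routine.
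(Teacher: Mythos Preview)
Your argument is correct and is essentially the paper's own proof: the minimality step is the same simplicity-implies-dense-orbits argument (the paper builds the ideal explicitly from $J=\overline{\mathrm{span}}\{vqv^*:v\in\mc{N}_A(D)\}$ rather than citing an invariant-open-set/ideal correspondence), and your stacking-by-multiplicity construction is exactly the paper's decomposition via the Boolean atoms $q_S=\prod_{j\in S}q_j\prod_{j\notin S}(q-q_j)$ and the selector $k(j,S)=$ ``$(n{+}1{-}j)$th element of $S$'', rephrased in terms of the locally constant functions $\mathrm{mult}$ and $r_i$ on $V$ (your $Q_k$ is precisely $\sum_{|S|\ge k}q_S$, and your level-$k$ normaliser is the paper's $v_{n+1-k}$). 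One slip to fix: as written, $\mathrm{Sat}(V)=Y$ trivially (take $v=1_A$); the intended condition is $\supp(v^*v)\subset V$, after which invariance and the ideal argument go through as you indicate.
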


\begin{proof}
For the first part of the statement, consider $J\coloneqq\overline{\mathrm{span}}\{vqv^* : v\in \mc{N}_A(D)\} \trianglelefteq D$, and note that $vJv^* \subset J$ for all $v \in \mc{N}_A(D)$. Set $I\coloneqq \overline{\mathrm{span}}\{ fv: f\in J, v\in \mc{N}_A(D)\}$ which is a closed linear subspace of $A$, and since $(D \subset A)$ is regular, it is readily seen that $I$ is a left ideal in $A$. Actually, $I$ is self-adjoint. Indeed, let $f\in J$ and $v\in\mc{N}_A(D)$. We have
\[
(fv)^* = v^*f^* = \lim_{n\to\infty} v^*\underbrace{(vv^*)^\frac{1}{n}}_{\in D} f^* = \lim_{n\to\infty} v^* f^* (vv^*)^\frac{1}{n},
\]
so it suffices to see that $v^*f^*(vv^*)^\frac{1}{n} \in I$ for all $n\in\mathbb{N}$. By Weierstrass's approximation theorem, it suffices to see that $v^*f^*(vv^*)^k \in I$ for all $k\in\mathbb{N}$, which is immediate, since $v^*f^* (vv^*)^k = (v^*f^*v) \cdot v^* (vv^*)^{k-1}$, and $v^* f^* v \in J$, so $(v^* f^* v) v^* \in I$ and thus multiplying $(v^*f^*v)v^*$ on the right with $(vv^*)^{k-1}$ yields again an element of $I$. Therefore, $I$ is a non-trivial ideal in $A$, and by simplicity it follows that $I=A$. If $\Phi\colon A\to D$ is the associated conditional expectation, since $\Phi(I) = J$, we have that $J=D$, i.e.\ $D = \overline{\mathrm{span}}\{ vqv^* : v \in \mc{N}_A(D)\}$. We can thus find $v_1,\dots,v_m\in\mc{N}_A(D)$ such that $1= \sum_{i=1}^m v_iqv_i^*$, and therefore $\mathcal{U}\coloneqq \{\supp(v_iqv_i^*)\}_{i=1}^m$ is an open cover of the spectrum of $D$, and by zero-dimensionality we obtain a partition of the spectrum of $D$ that refines $\mathcal{U}$. The indicator functions of the clopen sets in this partition thus give us projections $p_1',\dots, p_n' \in D$ such that $1=\sum_{j=1}^np_j'$, and for each $j=1,\dots,n$ there is some $1\le i \le m$ such that $\supp(p_j') \subset \supp( v_iqv_i^*)$, so in particular (by Tietze's extension theorem) there is $h\in D_+$ such that $p_j' = h^{1/2} v_i q v_i^* h^{1/2}$, which is to say that $p_j' \prec_{(D \sub A)} q$.  Setting $p_j:=pp_j'$ for each $j=1, \dots ,n$ finishes the proof of the first part of the statement.

For the second claim, write $p=\sum_{j=1}^np_j$ with $p_j \in D$ projections such that $p_j \prec_{(D \subset A)} q$ for all $j=1,\dots,n$. Let $t_j\in\mc{N}_A(D)$ be a partial isometry with $p_j = t_jt_j^*$ and  $q_j\coloneqq t_j^*t_j \le q$. For $S\subset \{1,\dots,n\}$, set
\[
q_S \coloneqq \prod_{j\in S}q_j\cdot\prod_{j\not\in S}(q-q_j).
\]
which is a projection, since $D$ is abelian. It is clear that if $S, T \subset \{1,\dots,n\}$ are distinct, then $q_S \perp q_T$. Since for any $j=1,\dots,n$ we have $q_j = \sum_{S \ni j}q_S$, we also have that $p_j = t_j q_j t_j^* = \sum_{S\ni j} t_jq_St_j^*$, whence
\begin{equation}\label{eq: p sum}
p = \sum_{j=1}^n \sum_{S \ni j} t_j q_St_j^*.
\end{equation}
For $j= 1,\dots,n$, and $S \subset \{1,\dots,n\}$ with $|S| \ge n+1-j$, let $k(j,S)\in S$ be the $(n+1-j)$th element of $S$ and set
\begin{equation}\label{eq: ordered projections}
\tilde{p}_j\coloneqq \sum_{|S| \ge n+1 -j} t_{k(j,S)} q_S t_{k(j,S)}^*.
\end{equation}
Note that each $\tilde{p}_j$ is a subprojection of $p$: indeed, let $S,T \subset \{1,\dots,n\}$ be two distinct subsets with cardinality at least $n+1-j$. If $k(j,S)=k(j,T)$, then $t_{k(j,S)}q_St_{k(j,S)}^* t_{k(j,T)}q_Tt_{k(j,T)}^* = t_{k(j,S)} ( q_S q_T ) t_{k(j,S)}^* = 0$. If $k(j,S) \ne k(j,T)$, then $t_{k(j,S)}^*t_{k(j,T)}=0$. In any case, $t_{k(j,S)}q_St_{k(j,S)}^* \perp t_{k(j,T)}q_Tt_{k(j,T)}^*$, and so $\tilde{p}_j$ is a sum of pairwise orthogonal subprojections of $p$, and so it is itself a subprojection of $p$. For $j=1,\dots,n$, we also set
\[
v_j \coloneqq \sum_{|S| \ge n+1-j} t_{k(j,S)}q_S.
\]
With the same reasoning that we used to see that the terms appearing in the sum in \eqref{eq: ordered projections} are pairwise orthogonal, we see that, whenever $S,T$ are  distinct, we have $(t_{k(j,S)}q_S)^*(t_{k(j,T)}q_T) = 0$ and also $(t_{k(j,S)} q_S) (t_{k(j,T)}q_T)^* = 0$, so \cite[Lemma~1.4]{LiaTik22} yields that $v_j \in \mc{N}_A(D)$. Clearly $v_jv_j^*=\tilde{p}_j$. On the other hand, $v_j^*v_j = \sum_{|S| \ge n+1-j}q_S$, therefore $v_1^*v_1 \le v_2^* v_2 \le \dots \le v^*_nv_n \le q$, which is to say that $\tilde{p}_1 \prec_{(D \sub A)} \tilde{p}_2 \prec_{(D \sub A)} \dots \prec \tilde{p}_n \prec_{(D \sub A)} q$. 

We claim that $\sum_{j=1}^n\tilde{p}_j = p$. Let $j\in\{1,\dots,n\}$, and let $S \subset \{1,\dots,n\}$ with $|S| \ge n+1-j$. Then, $t_{k(j,S)}q_St_{k(j,S)}^* \le t_{k(j,S)}t_{k(j,S)}^*=p_{k(j,S)}\le p$, and so by \eqref{eq: ordered projections} and the fact that the projections appearing in the sum therein are pairwise orthogonal, we see that 
\begin{equation}\label{eq:auxiliaryedit}
\tilde{p}_j\le p, \quad j=1,\dots,n.
\end{equation}
Moreover, the projections $\{\tilde{p}_j\}_{j=1}^n$ are pairwise orthogonal. Indeed, let $1 \le i,j \le n$ be distinct and $S,T \subset \{1,\dots,n\}$ with $|S|\ge n+1-j$, $|T| \ge n+1 -i$. If $k(j,S) \ne k(i,T)$, then $t_{k(j,S)}q_St_{k(j,S)}^* \perp t_{k(i,T)}q_Tt_{k(i,T)}^*$ If $k(j,S)=k(i,T)$, then $S$ and $T$ cannot coincide, since if they were the same set we would have $i=j$. Therefore, $t_{k(j,S)}q_St_{k(j,S)}^*t_{k(i,T)}q_Tt_{k(i,T)}^* = t_{k(j,S)}(q_Sq_T)t_{k(j,S)}^*=0$. By \eqref{eq: ordered projections}, we conclude that $\tilde{p}_j\perp\tilde{p}_i$, and so by \eqref{eq:auxiliaryedit} we have that $\sum_{j=1}^n\tilde{p}_j\le p$. For the reverse inequality, let $1\le j \le n$ and $S \subset \{1,\dots,n\}$ with $j\in S$. Let $i \in \{1,\dots,n\}$ be the unique integer so that $j$ is the $(n+1-i)$th element of $S$, so $j=k(i,S)$ and thus $t_jq_St_j^* = t_{k(i,S)}q_St_{k(i,S)}^*$, so $t_jq_St_j^* \le \tilde{p}_i$. By \eqref{eq: p sum}, we conclude that $p\le\sum_{i=1}^n\tilde{p}_i$, and thus $p=\sum_{i=1}^n\tilde{p}_i$. This completes the proof.
\end{proof}

\begin{theorem}\label{thm:trdivsimple}
Let $(D \subset A)$ be a unital, ample diagonal pair with $A$ simple and infinite-dimensional and such that $\mathrm{T}(A)$ is nonempty. Then, $(D \subset A)$ has tracial almost divisibility.
\end{theorem}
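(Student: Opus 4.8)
The plan is to verify condition \ref{it:tadequiv2} of \Cref{lem:tadequivalents}, i.e.\ that every projection $p\in D$ admits, for every $\varepsilon>0$, a subprojection $p'\in D$ with $\hat{p}'\approx_\varepsilon\frac12\hat{p}$; the full statement then follows from that lemma. The entire argument rests on \Cref{prop:orderedprojs}, which writes any projection as a sum of subprojections that is increasing in the \emph{dynamical} — and hence, against every trace, in the tracial — order, each term being dynamically below a prescribed nonzero $q\in D$. The two things I would extract from it are (a) the existence of nonzero projections of \emph{uniformly} small trace, and (b) an elementary rearrangement that balances such an ordered decomposition.

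For (a), I claim that for every $\varepsilon>0$ there is a nonzero projection $q\in D$ with $\tau(q)<\varepsilon$ for all $\tau\in\mathrm{T}(A)$. First note that the spectrum of $D$ has no isolated points: an isolated point would give a minimal projection of $D$, which — since $D$ is a masa — would be a minimal projection of $A$, forcing $A\cong M_k$ and contradicting infinite-dimensionality. Hence every nonzero projection of $D$ has a proper nonzero subprojection. Now I would prove a \emph{rough halving} step: given a nonzero projection $e\in D$, pick a proper nonzero subprojection $q_0$ of $e$ and apply \Cref{prop:orderedprojs} to obtain $e=\sum_{j=1}^n\tilde{p}_j$ with $\tilde{p}_1\prec_{(D\sub A)}\dots\prec_{(D\sub A)}\tilde{p}_n\prec_{(D\sub A)}q_0$, so that $\tau(\tilde{p}_1)\le\dots\le\tau(\tilde{p}_n)$ for every trace $\tau$. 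Since traces on the simple algebra $A$ are faithful, the vanishing terms form an initial segment and the nonzero ones an upper segment; moreover there are at least two nonzero terms, for otherwise $e=\tilde{p}_n\prec_{(D\sub A)}q_0$ would force $\tau(e)\le\tau(q_0)<\tau(e)$. Letting $g$ be the smallest nonzero term, $g$ is a nonzero subprojection of $e$, and being the minimum of at least two nonnegative values summing to $\tau(e)$ it satisfies $\tau(g)\le\frac12\tau(e)$ for every $\tau$, whence $\sup_\tau\tau(g)\le\frac12\sup_\tau\tau(e)$. Iterating this starting from $e=1$ halves the supremal trace at each stage, so finitely many steps yield the desired $q$.

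For the halving itself, fix $p$ and $\varepsilon>0$, choose $q$ as in (a) with $\sup_\tau\tau(q)<\varepsilon$, and apply \Cref{prop:orderedprojs} to $p$ and $q$ to get $p=\sum_{j=1}^n\tilde{p}_j$ with $\tilde{p}_1\prec_{(D\sub A)}\dots\prec_{(D\sub A)}\tilde{p}_n\prec_{(D\sub A)}q$; in particular $\tau(\tilde{p}_n)\le\tau(q)<\varepsilon$ uniformly. I would then set $p'\coloneqq\sum_{j\text{ odd}}\tilde{p}_j$ and $p''\coloneqq p-p'=\sum_{j\text{ even}}\tilde{p}_j$. Using only that the $\tau(\tilde{p}_j)$ are nondecreasing, a short telescoping estimate bounds $|\tau(p')-\tau(p'')|$ by the largest term $\tau(\tilde{p}_n)<\varepsilon$, for every $\tau$ at once. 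As $\tau(p')+\tau(p'')=\tau(p)$, this gives $|\tau(p')-\frac12\tau(p)|<\varepsilon/2$ for all $\tau$, which is exactly condition \ref{it:tadequiv2}.

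The main obstacle is step (a): for a single trace, small projections are trivial to produce (keep the lighter half of a clopen splitting repeatedly), but the content here is \emph{uniform} smallness over the whole, possibly large, trace simplex $\mathrm{T}(A)$. The mechanism forcing all traces to shrink simultaneously is precisely that \Cref{prop:orderedprojs} delivers a decomposition whose ordering is dynamical, hence valid at the operator level and so against every trace at once — this is what upgrades ``the smallest piece lies below the average'' into a uniform inequality. The only point I expect to need care with is the bookkeeping of which $\tilde{p}_j$ vanish, which is handled cleanly by faithfulness of traces on the simple algebra $A$.
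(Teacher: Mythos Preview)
Your argument is correct, and the main step—writing $p$ as an ordered sum via \Cref{prop:orderedprojs} and taking alternating terms—is exactly the paper's approach. The genuine difference lies in step~(a), producing a nonzero projection $q\in D$ of uniformly small trace. The paper argues that every trace restricts to an atomless measure on $\widehat{D}$ (because simplicity forces every orbit under the partial action of normalisers to be dense, hence infinite), then takes a decreasing sequence of clopen sets shrinking to a point and invokes Dini's theorem to get uniform convergence $\tau(q_n)\to 0$. You instead bootstrap \Cref{prop:orderedprojs} itself: no isolated points plus the masa property gives a proper subprojection, and the ordered decomposition against that subprojection must have at least two nonzero terms, the smallest of which halves the supremal trace; iterate. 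Your route is more self-contained in that it avoids the measure-theoretic detour and Dini, using only \Cref{prop:orderedprojs} and the elementary fact that a minimal projection in a masa is minimal in the ambient algebra; the paper's route is shorter once one has the atomlessness in hand and makes the dynamical origin of the uniformity more transparent.
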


\begin{proof}
Let $\varepsilon>0$ and let $p\in D$ be a projection. Let $\widehat{D}$ denote the spectrum of $D$. For $\tau\in\mathrm{T}(A)$ we have that $\tau\vert_D$ corresponds to a Borel probability measure $\mu_\tau$ on $\widehat{D}$ that is invariant for the partial action of the normalisers of the Cartan pair described in \cite[Proposition~4.6]{Ren08}. Since $A$ is simple, the orbit of any point in $\widehat{D}$ under the partial action of the normalisers needs to be dense in $\widehat{D}$ and in particular infinite, since $\widehat{D}$ is infinite (due to $A$ being infinite-dimensional), which is to say that $\mu_\tau$ is atomless. By zero-dimensionality of $\widehat{D}$, consider a decreasing sequence of projections $(q_n)_{n\ge1} \subset D$ with $\bigcap_{n\ge1}\supp(q_n)$ being a singleton. Then $\tau(q_n)\to0$ for any $\tau\in\mathrm{T}(A)$ and, since the continuous maps $\hat{q}_n\colon\mathrm{T}(A)\to[0,1]$ are pointwise decreasing to $0$, by Dini's theorem this convergence is uniform. By going far out in this sequence $(q_n)_{n\ge1}$, we can thus obtain an auxiliary non-zero projection $q\in D$ such that $\tau(q)<\varepsilon$ for all $\tau\in\mathrm{T}(A)$. By \Cref{prop:orderedprojs}, we write $p = \sum_{j=1}^np_j$ where each $p_j\in D$ is a projection and $p_1 \prec_{(D \sub A)} \dots \prec_{(D \sub A)} p_n \prec_{(D \sub A)} q$. Without loss of generality we can assume that $n$ is even (otherwise set $p_0\coloneqq0$ and consider the $n+1$ projections $p_0, \dots, p_n$ instead). Let $p' \coloneqq \sum_{2j \le n} p_{2j}$ be the sum of the even-numbered projections. For $\tau\in\mathrm{T}(A)$, we clearly have $\tau(p-p') \le \tau (p') $ and also $\tau(p_{2j})\le \tau(p_{2j+1})$ for all $j<n/2$, whence $\tau(p') \le \tau(p-p')+\tau(p_{2n})<\tau(p-p')+\varepsilon$. It follows that $\tau(p) \le 2\tau(p') < \tau(p)+\varepsilon$ and thus $|\tau(p') - \frac{1}{2}\tau(p)|<\varepsilon$ for all $\tau\in \mathrm{T}(A)$. The conclusion now follows from \Cref{lem:tadequivalents}.
\end{proof}

\begin{corollary}\label{cor:evprojdense}
Let $(D \subset A)$ be a unital, ample diagonal pair with $A$ simple and infinite-dimensional. Then, $\{\hat{p}: p \in D \mbox{ is a projection}\}$ is $\|\cdot\|_\infty$-dense in $\mathrm{Aff}(\mathrm{T}(A))_+^1$.
\end{corollary}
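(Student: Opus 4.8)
The plan is to split the statement into an analytic approximation carried out inside the diagonal $D$ and a combinatorial step that converts positive contractions into projections by means of tracial almost divisibility. We may assume $\mathrm{T}(A)\neq\emptyset$, since otherwise the statement is vacuous. Concretely, I would first establish that $\{\hat d:d\in D_+^1\}$ is $\|\cdot\|_\infty$-dense in $\mathrm{Aff}(\mathrm{T}(A))_+^1$, and then show that every $\hat d$ with $d\in D_+^1$ is itself approximable by some $\hat p$ with $p\in D$ a projection.

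The second, combinatorial, step is where \Cref{thm:trdivsimple} enters and it is routine once the density input is available. Fix $f\in\mathrm{Aff}(\mathrm{T}(A))_+^1$ and $\varepsilon>0$, and take $d\in D_+^1$ with $\hat d\approx_{\varepsilon/3}f$. Since $\widehat{D}$ is zero-dimensional, $d$ is uniformly approximated by a locally constant contraction $d'=\sum_{i=1}^k\lambda_i\chi_{U_i}$, where $\{U_i\}_{i=1}^k$ is a clopen partition of $\widehat{D}$ and $\lambda_i\in[0,1]$, so that $\hat{d'}\approx_{\varepsilon/3}\hat d$. Each $\chi_{U_i}$ is a projection in $D$, and \Cref{lem:tadequivalents}\ref{it:tadequiv3} (valid since $(D\subset A)$ has tracial almost divisibility by \Cref{thm:trdivsimple}) produces a subprojection $p_i\le\chi_{U_i}$ in $D$ with $\hat p_i\approx_{\varepsilon/(3k)}\lambda_i\widehat{\chi_{U_i}}$. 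Disjointness of the $U_i$ makes the $p_i$ pairwise orthogonal, so $p\coloneqq\sum_{i=1}^k p_i$ is a projection in $D$ with $\hat p=\sum_i\hat p_i\approx_{\varepsilon/3}\hat{d'}\approx_{2\varepsilon/3}f$, hence $\hat p\approx_\varepsilon f$.

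The real work, and what I expect to be the main obstacle, is the first step: density of $\{\hat d:d\in D_+^1\}$ in $\mathrm{Aff}(\mathrm{T}(A))_+^1$. I would identify $\mathrm{T}(A)$ with the weak$^*$-compact convex set $K$ of invariant Borel probability measures $\mu_\tau$ on $\widehat{D}$ \cite{CryNag17}, so that $K$ sits inside $M(\widehat{D})$ and $\hat d(\tau)=\int d\,\mathrm{d}\mu_\tau$. The \emph{real} density of $\{\hat d:d\in D_{sa}\}$ in $\mathrm{Aff}(\mathrm{T}(A))$ is then soft: a functional annihilating every $\hat d$ extends to a signed measure on $K$ whose positive and negative parts have barycenters $b_\pm\in K$; annihilation forces $b_+=b_-$ as measures on $\widehat{D}$, so the functional vanishes and Hahn--Banach yields density. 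The difficulty is to keep the approximant a \emph{positive contraction}: a trace controls only the integral $\int d\,\mathrm{d}\mu_\tau$ and not the range of $d$ on $\widehat{D}$, so truncating a self-adjoint approximant by functional calculus can alter its trace values by a definite amount, and naive clamping fails uniformly. The way out is dynamical: distinct ergodic (i.e.\ extreme invariant) measures are mutually singular and are separated by a common Borel set of generic points; feeding the ergodic decomposition of $b_\pm$ into the separation inequality, this mutual singularity absorbs the positivity constraint and shows that no affine $[0,1]$-valued $f$ can be separated from $\{\hat d:d\in D_+^1\}$. Alternatively, one may invoke the standard fact that $\{\hat a:a\in A_+^1\}$ is dense in $\mathrm{Aff}(\mathrm{T}(A))_+^1$ for any unital $A$ with nonempty trace space and transport it into $D$ using that the associated expectation is trace preserving, $\tau=\tau\circ\Phi$, and maps $A_+^1$ into $D_+^1$; I expect this positivity upgrade to carry essentially all of the difficulty, with everything else being assembly.
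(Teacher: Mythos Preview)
Your proposal is correct and, in its overall architecture, matches the paper's proof exactly: reduce to a positive contraction in $D$, approximate by a step function on a clopen partition, and invoke tracial almost divisibility (\Cref{thm:trdivsimple} together with \Cref{lem:tadequivalents}\ref{it:tadequiv3}) to replace each $\lambda_i\chi_{U_i}$ by a subprojection.

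The only point worth flagging is your assessment of where the difficulty lies. You treat the density of $\{\hat d:d\in D_+^1\}$ in $\mathrm{Aff}(\mathrm{T}(A))_+^1$ as ``the real work'' and sketch an ergodic-decomposition argument to handle the positivity constraint, mentioning only as an afterthought the alternative of invoking the standard fact that $\{\hat a:a\in A_+^1\}$ is dense in $\mathrm{Aff}(\mathrm{T}(A))_+^1$ and then composing with the trace-preserving expectation $\Phi$. In fact this alternative is precisely what the paper does: it cites \cite[Lemma~6.2]{KirRor14} for the density of $\{\hat a:a\in A_+^1\}$ and then uses $\tau=\tau\circ\Phi$ (via \cite{LiRen19,CryNag17}) to replace $a$ by $\Phi(a)\in D_+^1$. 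So the ``positivity upgrade'' you anticipate as carrying essentially all the difficulty is in practice a one-line citation, and your measure-theoretic detour through mutual singularity of ergodic measures is unnecessary.
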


\begin{proof}
Let $f \in\mathrm{Aff}(\mathrm{T}(A))_+^1$ and let $\varepsilon>0$. We aim to approximate $f$ sufficiently well, so by \cite[Lemma~6.2]{KirRor14} we can assume without loss of generality that $f=\hat{a}$ for some $a\in A_+^1$. Since $(D \subset A)$ is a diagonal pair, all traces on $A$ factorise through the conditional expectation $\Phi\colon A\to D$ \cite[Lemma~4.3]{LiRen19}, \cite[Corollary~3.6]{CryNag17}, so upon replacing $a$ by $\Phi(a)$ we can further assume without harming the generality that $a\in D_+^1$. By compactness, zero-dimensionality of the spectrum of $D$ and uniform continuity of $a$ (as a continuous function over the spectrum of $D$), there are pairwise orthogonal projections $p_1, \dots, p_n \in D$ and $\lambda_1, \dots,\lambda_n \ge0$ such that $\| a - \sum_{i=1}^n\lambda_i p_i \|<\varepsilon$. By \Cref{thm:trdivsimple} and \ref{it:tadequiv3} of \Cref{lem:tadequivalents}, we obtain projections $p'_i\in D$ with $p'_i \le p_i$ and such that $\hat{p}'_i \approx_{\varepsilon/n} \lambda_i\hat{p}_i$ for $i=1,\dots,n$. Define $p':=\sum_{i=1}^np'_i\in D$ which is a projection and satisfies $\hat{p}' \approx_\varepsilon \sum_{i=1}^n \lambda_i \hat{p}_i$, whence $\hat{p}' \approx_{2\varepsilon} \hat{a}$.
\end{proof}

\section{Main result and consequences}

\noindent We begin with some technical lemmas that will be needed for the proof of the main theorem. For a self-adjoint element $a$ in a $\mathrm{C}^\ast$-algebra $A$, we will say that $a$ is \emph{locally invertible} when $a$ is invertible in $\her(a)$. Equivalently, $a$ is locally invertible if and only if $0$ is isolated in $\{0\}\cup \sigma(a)$, where $\sigma(a)$ denotes the spectrum of $a$. For $\varepsilon>0$, we will use $(a-\varepsilon)_+ \in \mathrm{C}^\ast(a)_+$ to denote the $\varepsilon$-cutdown of $a$, i.e.\ the element obtained by applying functional calculus on $a$ with the map $t\mapsto\max\{t-\varepsilon,0\}$.  Recall that for $a\in A_+$ the map $\mathrm{T}(A) \ni \tau \mapsto d_\tau (a) \in [0, 1]$ (called the \emph{rank} of $a$) is lower semicontinuous and affine \cite{Thi20}. Moreover, if $a$ is locally invertible, the rank of $a$ is actually continuous.

\begin{lemma}\label{lem not locally invertible element decreasing sequences}
Let $A$ be a simple, unital $\mathrm{C}^*$-algebra with $\mathrm{QT}(A)\ne\emptyset$ and let $a\in A_+$. If $a$ is not locally invertible, then there are a strictly decreasing sequence $\varepsilon_k\downarrow0$ and an increasing sequence $(a_k)_{k\ge1}\sub \mathrm{C}^*(a)_{+}^1$ such that $d_\tau(a-\varepsilon_k)_+<\tau(a_k)\le d_\tau(a-\varepsilon_{k+1})_+$ for all $k\in\bb{N}$, $\tau\in\mathrm{QT}(A)$.
\end{lemma}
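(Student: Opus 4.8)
The plan is to pass to the spectral picture of $a$ and realise the $a_k$ as continuous ramp functions of $a$ straddling the $\varepsilon_k$. Since any $\tau\in\mathrm{QT}(A)$ is linear on the commutative algebra $\mathrm{C}^*(a)\cong C_0(\sigma(a)\setminus\{0\})$, its restriction there is integration against a finite Radon measure $\mu_\tau$ supported on $\sigma(a)\setminus\{0\}\subset(0,\|a\|]$, so that $\tau(f(a))=\int f\,d\mu_\tau$. A routine computation from the definition $d_\tau(b)=\lim_k\tau(b^{1/k})$ together with dominated convergence gives $d_\tau((a-\varepsilon)_+)=\mu_\tau((\varepsilon,\infty))$ for every $\varepsilon>0$. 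Thus the desired inequalities read $\mu_\tau((\varepsilon_k,\infty))<\int f_k\,d\mu_\tau\le\mu_\tau((\varepsilon_{k+1},\infty))$, which suggests taking $a_k=f_k(a)$ with $f_k$ pinched between $\chi_{(\varepsilon_k,\infty)}$ and $\chi_{(\varepsilon_{k+1},\infty)}$.

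Because $a$ is not locally invertible, $0$ is not isolated in $\{0\}\cup\sigma(a)$, so I can fix a strictly decreasing sequence of spectral points $s_k\in\sigma(a)\cap(0,\infty)$ with $s_k\downarrow0$ and then interlace a strictly decreasing null sequence $\varepsilon_k\downarrow0$ around them, arranging $\varepsilon_{k+1}<s_k<\varepsilon_k$ for all $k$ (e.g.\ $\varepsilon_k=\tfrac12(s_{k-1}+s_k)$ for $k\ge2$ and $\varepsilon_1=2s_1$). For each $k$ let $f_k\in C_0(\sigma(a)\setminus\{0\})$ be a $[0,1]$-valued continuous function with $f_k\equiv1$ on $[\varepsilon_k,\infty)$, $f_k\equiv0$ on $[0,\varepsilon_{k+1}]$, and strictly increasing on $[\varepsilon_{k+1},\varepsilon_k]$, and set $a_k:=f_k(a)\in\mathrm{C}^*(a)_+^1$. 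Monotonicity of $(a_k)$ is automatic: on $[\varepsilon_{k+1},\infty)$ one has $f_{k+1}\equiv1\ge f_k$, while on $[0,\varepsilon_{k+1}]$ one has $f_k\equiv0\le f_{k+1}$, so $f_k\le f_{k+1}$ pointwise and hence $a_k\le a_{k+1}$.

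The upper estimate is immediate from $f_k\le\chi_{(\varepsilon_{k+1},\infty)}$, giving $\tau(a_k)=\int f_k\,d\mu_\tau\le\mu_\tau((\varepsilon_{k+1},\infty))=d_\tau((a-\varepsilon_{k+1})_+)$. For the strict lower estimate I would use $f_k\ge\chi_{(\varepsilon_k,\infty)}$, so that $\tau(a_k)-d_\tau((a-\varepsilon_k)_+)=\int(f_k-\chi_{(\varepsilon_k,\infty)})\,d\mu_\tau$, with nonnegative integrand that is strictly positive on $(\varepsilon_{k+1},\varepsilon_k)$. Choosing a continuous bump $0\le h_k\le f_k$ supported in $(\varepsilon_{k+1},\varepsilon_k)$ with $h_k(s_k)>0$, one checks $f_k-\chi_{(\varepsilon_k,\infty)}\ge h_k$ pointwise on $\sigma(a)$, whence $\tau(a_k)-d_\tau((a-\varepsilon_k)_+)\ge\tau(h_k(a))$.

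The crux — and the only place simplicity and the spectral interlacing are genuinely used — is that this last quantity is strictly positive for every $\tau$. Since $s_k\in\sigma(a)$ and $h_k(s_k)>0$, we have $\|h_k(a)\|\ge h_k(s_k)>0$, so $h_k(a)$ is a nonzero positive element of the simple algebra $A$, and faithfulness of the normalised quasitraces of a simple unital $\mathrm{C}^*$-algebra forces $\tau(h_k(a))>0$ for all $\tau\in\mathrm{QT}(A)$. (If one prefers to avoid quoting faithfulness directly: simplicity yields $1\precsim\bigoplus_{i=1}^m h_k(a)$ for some $m$, so $d_\tau(h_k(a))\ge\tfrac1m>0$; as the spectral measure of $h_k(a)$ is then nonzero on $(0,\infty)$ and the integrand is positive there, $\tau(h_k(a))=\int t\,d\mu_\tau^{h_k(a)}(t)>0$.) This gives $d_\tau((a-\varepsilon_k)_+)<\tau(a_k)$ for all $k$ and all $\tau$, completing the construction. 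I expect the main technical obstacle to be precisely the bookkeeping of the interlacing: the strict inequality can hold for all quasitraces simultaneously only because each ramp interval $(\varepsilon_{k+1},\varepsilon_k)$ is arranged to contain an actual spectral point $s_k$, so that the comparison element $h_k(a)$ is nonzero.
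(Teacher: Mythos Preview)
Your argument is correct and follows essentially the same strategy as the paper's proof: both interlace a null sequence of spectral points of $a$ with the cut-off levels, build ramp functions as the $a_k$, and use a bump supported on the transition interval (nonzero because it hits a genuine spectral point) together with simplicity to force the strict lower inequality. The only cosmetic differences are that the paper takes all of the interlacing points $\eta_{3k},\eta_{3k+1},\eta_{3k+2}$ from $\sigma(a)$ and phrases the strict gap via $d_\tau(f_k(a))>0$ rather than $\tau(h_k(a))>0$, whereas you use a single spectral point $s_k$ per interval and the measure-theoretic language; your version is in fact slightly more economical.
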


\begin{proof}
Since $0\in\sigma(a)$ is not isolated, let $(\eta_k)_{k\ge1}\sub\sigma(a)$ be a strictly decreasing sequence with $\eta_k\to0$. For $k\in\bb{N}$ consider the continuous functions $f_k\colon\bb{R}\to\bb{R}$ defined as
\begin{equation*}
f_k(t) \coloneqq \begin{cases} (t-\eta_{3k+2})_+,\quad t\le\eta_{3k+1} \\ \text{linear},\quad t\in[\eta_{3k+1},\eta_{3k}]\\ 0,\quad t\ge\eta_{3k}
\end{cases}
\end{equation*}
and
\begin{equation*}
g_k(t)\coloneqq \begin{cases}0,\quad t\le\eta_{3k+3}\\ \text{linear},\quad t\in[\eta_{3k+3},\eta_{3k+2}]\\ 1,\quad t\ge\eta_{3k+2} \end{cases}
\end{equation*}
Note that $(t-\eta_{3k})_++f_k(t)\le(t-\eta_{3k+3})_+$ and $(t-\eta_{3k})_+\cdot f_k(t)=0$ for all $t\in[0,1]$, and $f_k(\eta_{3k+1})=\eta_{3k+1}-\eta_{3k+2}>0$. Clearly, $(t-\eta_{3k})_+^{1/n}+f_k(t)^{1/m}\le g_k(t)$ for all $m,n,k\in\bb{N}$, so $d_\tau(a-\eta_{3k})_++d_\tau(f_k(a))\le\tau(g_k(a))$ for all $k\in\bb{N}$ and all $\tau\in\mathrm{QT}(A)$. Since $\eta_{3k+1}\in\sigma(a)$, the element $f_k(a)\in \mathrm{C}^\ast(a)_+$ is non-zero, and since $A$ is simple, $d_\tau(f_k(a))>0$, which is to say that $d_\tau(a-\eta_{3k})_+<\tau(g_k(a))$ for all $k\in\bb{N}$ and $\tau\in\mathrm{QT}(A)$. Finally, since $g_k$ has the same support as $(t-\eta_{3k+3})_+$, we have that $\tau(g_k(a))\le d_\tau(g_k(a))=d_\tau(a-\eta_{3k+3})_+$ for all $\tau\in\mathrm{QT}(A)$. Setting $a_k \coloneqq g_k(a)$ and $\varepsilon_k \coloneqq \eta_{3k}$ for all $k\in\bb{N}$ yields the statement.
\end{proof}

The following folklore lemma is well-known and, using compactness of $\mathrm{QT}(A)$ under the assumption of $A$ being unital, its proof is a direct application of Dini's theorem (see e.g.\ \cite[Remark~2.7]{Win12}).

\begin{lemma}\label{lem dim functions ctdns}
Let $A$ be a unital $\mathrm{C}^*$-algebra with $\mathrm{QT}(A)$ nonempty. If $a,b\in A_+$ satisfy $d_\tau(a)<d_\tau(b)$ for all $\tau\in\mathrm{QT}(A)$, then for any $\varepsilon>0$ there exists $\delta>0$ so that $d_\tau(a-\varepsilon)_+<d_\tau(b-\delta)_+$ for all $\tau\in\mathrm{QT}(A)$.
\end{lemma}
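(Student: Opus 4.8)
The plan is to reduce the statement, which concerns the merely lower semicontinuous maps $\tau\mapsto d_\tau((a-\varepsilon)_+)$ and $\tau\mapsto d_\tau((b-\delta)_+)$, to a compactness argument over the weak$^\ast$-compact set $\mathrm{QT}(A)$ (compact since $A$ is unital). The device that makes this work is to sandwich each cutdown's dimension function between genuinely \emph{continuous} functionals of the form $\tau\mapsto\tau(f(c))$. First I would record the elementary fact that if $c\in A_+$, $0<s<s'$ and $f\colon[0,\infty)\to[0,1]$ is continuous with $f|_{[s',\infty)}=1$ and $f|_{[0,s]}=0$, then
\[
d_\tau((c-s')_+)\le \tau(f(c))\le d_\tau((c-s)_+)\qquad(\tau\in\mathrm{QT}(A)).
\]
The right-hand inequality holds because $\supp(f)\sub(s,\infty)$ gives $f(c)\in\her((c-s)_+)$, whence $f(c)\precsim(c-s)_+$ and $\tau(f(c))\le d_\tau(f(c))\le d_\tau((c-s)_+)$, using that $\tau(x)\le d_\tau(x)$ for positive contractions. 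For the left-hand inequality I would set $e(t)\coloneqq\min((t-s')_+,1)$ and observe that $e(c)^{1/n}\le f(c)$ for every $n$ (pointwise on $\sigma(c)$, since $e\le 1$ while $f\equiv 1$ on $[s',\infty)$), so that $d_\tau((c-s')_+)=d_\tau(e(c))=\lim_n\tau(e(c)^{1/n})\le\tau(f(c))$. All of this uses only monotonicity of $\tau$ on the commutative algebra $\mathrm{C}^*(c)$ and the standard properties of $d_\tau$ from \cite{BlaHan82}, so it is valid for quasitraces.

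With this in hand, fix $\varepsilon>0$. I would pick a continuous $f\colon[0,\infty)\to[0,1]$ with $f|_{[\varepsilon,\infty)}=1$ and $f|_{[0,\varepsilon/2]}=0$, and set $\phi(\tau)\coloneqq\tau(f(a))$, a continuous function on $\mathrm{QT}(A)$. The sandwich inequality (with $c=a$, $s=\varepsilon/2$, $s'=\varepsilon$) yields $d_\tau((a-\varepsilon)_+)\le\phi(\tau)\le d_\tau(a)<d_\tau(b)$ for all $\tau$, so $\phi$ is a continuous function lying strictly below the rank of $b$. On the other side, for $m\in\bb{N}$ I would choose continuous $h_m\colon[0,\infty)\to[0,1]$ with $h_m|_{[1/m,\infty)}=1$, $h_m|_{[0,1/(m+1)]}=0$ and $h_m\le h_{m+1}$ pointwise, and set $\psi_m(\tau)\coloneqq\tau(h_m(b))$. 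Applying the sandwich to $b$ gives $d_\tau((b-1/m)_+)\le\psi_m(\tau)\le d_\tau((b-1/(m+1))_+)$, so the continuous functions $\psi_m$ increase pointwise to $\tau\mapsto\sup_{\delta>0} d_\tau((b-\delta)_+)=d_\tau(b)$.

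Finally, the functions $\phi-\psi_m$ are continuous on the compact set $\mathrm{QT}(A)$, decrease in $m$, and converge pointwise to $\phi(\tau)-d_\tau(b)<0$. A one-sided Dini/compactness argument then produces a single index $M$ with $\phi<\psi_M$ everywhere: for each $\tau$ choose $m_\tau$ with $(\phi-\psi_{m_\tau})(\tau)<0$, extend this strict inequality to an open neighbourhood by continuity, cover $\mathrm{QT}(A)$ by finitely many such neighbourhoods, and take $M$ to be the largest of the corresponding indices, invoking monotonicity of $(\psi_m)_m$. Setting $\delta\coloneqq 1/(M+1)$, I conclude that for every $\tau\in\mathrm{QT}(A)$,
\[
d_\tau((a-\varepsilon)_+)\le\phi(\tau)<\psi_M(\tau)\le d_\tau((b-\delta)_+),
\]
which is exactly the assertion. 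The only genuine subtlety is this last step: because $\tau\mapsto d_\tau(b)$ is merely lower semicontinuous, ordinary Dini's theorem (which requires a continuous limit) does not apply directly. It is precisely the interposition of the \emph{continuous} functionals $\psi_m$, sandwiching the semicontinuous ranks, together with the monotone one-sided compactness argument, that circumvents this obstacle.
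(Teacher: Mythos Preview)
Your argument is correct and is precisely the standard unfolding of the Dini-type compactness argument the paper alludes to: sandwich the lower semicontinuous ranks by continuous evaluations $\tau\mapsto\tau(f(c))$ and then use compactness of $\mathrm{QT}(A)$ to find a uniform index. The paper does not spell this out but simply cites \cite[Remark~2.7]{Win12}, so your write-up is just the explicit version of the same proof.
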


We will also make use of the following interpolation property from Choquet theory which we recall here for convenience. 

\begin{proposition}{\normalfont{\cite[Theorem~11.12]{Goo10}}}\label{prop:choquetinterpol}
Let $K$ be a Choquet simplex and let $f,g\colon K \to \bb{R}$ be affine maps with $f$ upper semicontinuous and $g$ lower semicontinuous. If $f(\kappa) < g(\kappa)$ for all $\kappa \in K$, then there is a continuous affine map $h \colon K \to \bb{R}$ such that $f(\kappa) < h(\kappa) <g(\kappa)$ for all $\kappa \in K$.
\end{proposition}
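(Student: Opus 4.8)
This is the strict form of Edwards' separation theorem for Choquet simplices, which is exactly the content of the cited \cite[Theorem~11.12]{Goo10}; were one to prove it from the basic separation theorem, the plan would be to reduce the strict interpolation to a non-strict one by first manufacturing a uniform gap, and then to invoke the simplex structure for the non-strict separation. The genuine mathematical content lives entirely in the simplex hypothesis.

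\emph{Producing a uniform gap.} Since $f$ is upper semicontinuous, $-f$ is lower semicontinuous, so $g-f$ is lower semicontinuous on the compact set $K$. A real-valued lower semicontinuous function on a compact set attains its infimum, so $\delta\coloneqq\min_{\kappa\in K}\bigl(g(\kappa)-f(\kappa)\bigr)$ is attained, and since $g-f>0$ pointwise we get $\delta>0$. Hence $f+\tfrac{\delta}{3}\le g-\tfrac{\delta}{3}$ on $K$, where $f+\tfrac{\delta}{3}$ is still affine and upper semicontinuous and $g-\tfrac{\delta}{3}$ is still affine and lower semicontinuous. It therefore suffices to find a continuous affine $h$ with $f+\tfrac{\delta}{3}\le h\le g-\tfrac{\delta}{3}$: any such $h$ satisfies $f<f+\tfrac{\delta}{3}\le h\le g-\tfrac{\delta}{3}<g$, which is the desired strict interpolation with room to spare.

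\emph{The non-strict separation.} Here I would regard the affine upper semicontinuous function $f+\tfrac{\delta}{3}$ as a bounded upper semicontinuous \emph{concave} function and the affine lower semicontinuous function $g-\tfrac{\delta}{3}$ as a bounded lower semicontinuous \emph{convex} function lying above it, and apply the separation theorem to obtain a continuous affine function wedged in between. The reason this is possible is that for a Choquet simplex the space $\mathrm{Aff}(K)$ of continuous affine real functions enjoys strong interpolation (the Riesz interpolation property; equivalently, upper envelopes of continuous functions on $K$ are again affine). One writes $f+\tfrac{\delta}{3}$ as the pointwise infimum of the continuous affine functions dominating it and $g-\tfrac{\delta}{3}$ as the pointwise supremum of the continuous affine functions it dominates — valid because a concave upper semicontinuous function equals its upper envelope and a convex lower semicontinuous function equals its lower envelope — and then uses the interpolation property to separate these two one-sided families by a single continuous affine function.

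\emph{Main obstacle.} The whole difficulty is concentrated in this last step, and it is precisely where the simplex hypothesis cannot be dropped: for a general compact convex $K$ the two approximating families of continuous affine functions need not interlock, and the separation genuinely fails. The reduction to a positive gap and the passage from the non-strict to the strict conclusion are, by contrast, routine compactness manipulations. Since the separation is exactly \cite[Theorem~11.12]{Goo10}, in the write-up I would simply record the gap reduction and cite that reference for the non-strict interpolation.
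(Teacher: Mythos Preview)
The paper does not prove this proposition at all: it is stated with a direct citation to \cite[Theorem~11.12]{Goo10} and used as a black box. Your outline is a correct sketch of the standard argument behind that theorem --- the compactness reduction to a uniform gap followed by the non-strict Edwards separation for simplices --- so there is nothing to compare. One small inconsistency: you open by noting (correctly) that \cite[Theorem~11.12]{Goo10} already \emph{is} the strict statement, but then close by proposing to cite it only for the non-strict step; in practice one simply cites it for the full statement, exactly as the paper does.
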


\begin{lemma}\label{lem:herexpstable}
Let $(D \subset A)$ be a $\mathrm{C}^\ast$-pair with $D$ abelian, and let $\Phi\colon A \to D$ be a hereditary positive linear map. Then, the map $\Phi\otimes E_n\colon A\otimes M_n \to D\otimes D_n$ is hereditary for any $n\in\bb{N}$.
\end{lemma}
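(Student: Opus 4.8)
The plan is to unwind \Cref{def:hermap} directly. Fix $n\in\bb{N}$ and a positive element $x\in(A\otimes M_n)_+$ with matrix entries $x_{ij}\in A$; the goal is to show $x\in\her(y)$, where $y\coloneqq(\Phi\otimes E_n)(x)$. Since $E_n$ annihilates the off-diagonal matrix units $e_{ij}$, a one-line computation gives $y=\mathrm{diag}(\Phi(x_{11}),\dots,\Phi(x_{nn}))$. Each diagonal entry $x_{ii}$ of a positive matrix is itself positive, so $\Phi(x_{ii})\in D_+$ and hence $y\in(D\otimes D_n)_+$; in particular this already certifies that $\Phi\otimes E_n$ is a positive linear map, which is what makes the word ``hereditary'' applicable to it. I would stress here that we never invoke complete positivity of $\Phi$: positivity of $\Phi\otimes E_n$ on $(A\otimes M_n)_+$ is bought cheaply precisely because $E_n$ retains only the diagonal, where $\Phi$ acts on genuinely positive elements.

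Next I would feed the hypothesis through the diagonal. Because $\Phi$ is hereditary, $x_{ii}\in\her(\Phi(x_{ii}))$ in $A$ for every $i$. Writing $D(x)\coloneqq\mathrm{diag}(x_{11},\dots,x_{nn})$, I would verify that $D(x)\in\her(y)$: each corner $x_{ii}\otimes e_{ii}$ lies in $\her(\Phi(x_{ii})\otimes e_{ii})\subset\her(y)$, using $\Phi(x_{ii})\otimes e_{ii}\le y$, and summing over $i$ stays inside the linear subspace $\her(y)$.

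The crucial analytic input is the comparison $x\le n\,D(x)$, which I would obtain from the standard fact that compression to the diagonal is an average of unitary conjugations. With $\zeta$ a primitive $n$-th root of unity and $W\coloneqq\mathrm{diag}(\zeta,\zeta^2,\dots,\zeta^n)\in M_n$, the character-sum identity $\sum_{k=0}^{n-1}\zeta^{k(i-j)}=n\delta_{ij}$ yields $\sum_{k=0}^{n-1}(1\otimes W)^k\,x\,(1\otimes W)^{-k}=n\,D(x)$. Each summand is positive and $x$ is the $k=0$ term, so $n\,D(x)-x$ is a sum of positive elements, giving $0\le x\le n\,D(x)$.

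Finally, since $\her(y)$ is a hereditary sub-$\mathrm{C}^\ast$-algebra of $A\otimes M_n$ containing $n\,D(x)$ and $0\le x\le n\,D(x)$, order-heredity forces $x\in\her(y)=\her((\Phi\otimes E_n)(x))$, which is exactly the requirement in \Cref{def:hermap}. The only step that is not pure bookkeeping is the comparison $x\le n\,D(x)$; I expect the main conceptual point to be the interplay between mere positivity (rather than complete positivity) of $\Phi$ and the diagonal-truncation nature of $E_n$, since it is this combination that both makes $\Phi\otimes E_n$ positive and lets the entrywise heredity of $\Phi$ assemble into heredity of the amplified map.
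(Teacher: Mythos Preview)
Your proof is correct and takes a genuinely different route from the paper's. The paper proceeds entry-by-entry: it first records the auxiliary fact that $xx^*\in\her(a)$ and $x^*x\in\her(b)$ force $x\in\overline{aAb}$, then writes an arbitrary positive $x\in(A\otimes M_n)_+$ via a factorisation $x=\sum_{i,j,k}a_{k,i}^*a_{k,j}\otimes e_{i,j}$, and checks that each off-diagonal piece $a_{k,i}^*a_{k,j}$ lands in the appropriate corner $\overline{d_iAd_j}$ of $\her((\Phi\otimes E_n)(x))$, using the explicit description $\her(d)=\overline{\mathrm{span}}\{a\otimes e_{i,j}:a\in\overline{d_iAd_j}\}$. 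Your argument bypasses all of this by invoking the global order inequality $x\le n\,D(x)$ coming from the roots-of-unity averaging, which reduces the problem in one stroke to the diagonal $D(x)$, where heredity of $\Phi$ applies directly on each entry. Your approach is shorter and more conceptual; the paper's approach, while more laborious, yields the finer information that each matrix entry $x_{ij}$ itself lies in the specific corner $\overline{\Phi(x_{ii})\,A\,\Phi(x_{jj})}$, which is not visible from the bare inequality $x\le n\,D(x)$. One small remark: the conjugating unitary $1\otimes W$ lives in the multiplier algebra when $A$ is non-unital, but conjugation still maps $A\otimes M_n$ to itself, so the inequality $x\le n\,D(x)$ holds there as well.
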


\begin{proof}
Note first that if $a,b\in A_+$ and $x\in A$ are such that $xx^*\in\her(a)$ and $x^*x\in\her(b)$, then $x\in\overline{aAb}$. We briefly prove this: for $\varepsilon>0$, we have $(xx^*)^{\varepsilon/2}\in\her(a)$, so we can write $(xx^*)^{\varepsilon/2}=\lim_n aa_n$ for some $(a_n)_{n\ge1}\sub A$ and likewise, we can write $(x^*x)^{\varepsilon/2}=\lim_nb_nb$ for some $(b_n)_{n\ge1}\sub A$. Now since $f(xx^*)x=xf(x^*x)$ for all $f\in C_0(\sigma(xx^*))$, we have
\begin{align*}
(xx^*)^\varepsilon x &= (xx^*)^{\varepsilon/2}(xx^*)^{\varepsilon/2}x \\ 
&= (xx^*)^{\varepsilon/2}x(x^*x)^{\varepsilon/2}\\
&=\lim_{n\to\infty} aa_nb_nb\in\overline{aAb},
\end{align*}
whence $x\in\overline{aAb}$, since $x=\lim_{\varepsilon\to0^+}(xx^*)^\varepsilon x$.

Let $n\in\bb{N}$ and let $\{e_{i,j}\}_{i,j=1}^n$ be the canonical system of matrix units for $M_n$. For any $d = \sum_{i=1}^nd_i\otimes e_{i,i}\in D\otimes D_n$ with $d_i\in D_+$, $i=1,\dots,n$, one directly sees that
\begin{equation}\label{eq hereditary in matrix}
\her(d)=\overline{\mathrm{span}}\big\{a\otimes e_{i,j}:i,j=1,\dots,n; a\in\overline{d_iAd_j}\big\}.
\end{equation}
Now let $x\in(A\otimes M_n)_+$, so we can write $ x = \sum_{i,j=1}^n\sum_{k=1}^na_{k,i}^*a_{k,j}\otimes e_{i,j}$ where $a_{k,j}\in A$. Set $d_i:=\sum_{k=1}^n\Phi(a_{k,i}^*a_{k,i})\in D_+$, so 
\[
d\coloneqq \Phi\otimes E_n(x)=\sum_{i=1}^nd_i\otimes e_{i,i} \in D\otimes D_n.
\]
By \eqref{eq hereditary in matrix} it suffices to show that $\sum_{k=1}^na_{k,i}^*a_{k,j}\in\overline{d_iAd_j}$ for all $i,j=1,\dots,n$. Let $1\le i,j,k\le n$. Note that since $\Phi\colon A\to D$ is hereditary, we have
\begin{align*}
0&\le(a_{k,i}^*a_{k,j})^*(a_{k,i}^*a_{k,j}) \\ 
&\le\|a_{k,i}\|^2a_{k,j}^*a_{k,j}\in\her(\Phi(a_{k,j}^*a_{k,j}))\sub\her(d_j)
\end{align*}
and thus $(a_{k,i}^*a_{k,j})^*(a_{k,i}^*a_{k,j})\in\her(d_j)$. The same argument also shows that $(a_{k,i}^*a_{k,j})(a_{k,i}^*a_{k,j})^*\in\her(d_i)$, and thus $a_{k,i}^*a_{k,j}\in\overline{d_iAd_j}$. This completes the proof, since $\sum_{k=1}^na_{k,i}^\ast a_{k,j}\in \overline{d_iAd_j}$ for all $i,j=1,\dots,n$, as we wanted.
\end{proof}

\begin{proposition}\label{prop:strict-and-dynamical-imply-diagcomp}
Let $(D \subset A)$ be a unital, ample diagonal pair with hereditary associated conditional expectation, and assume that $A$ is separable, simple and that $\mathrm{QT}(A)=\mathrm{T}(A)$ is nonempty. If $(D \subset A)$ has dynamical comparison and $A$ has strict comparison, then $(D \subset A)$ has diagonal comparison. 
\end{proposition}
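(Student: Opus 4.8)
The plan is to prove the nontrivial implication by first reducing to the case $n=1$ and then manufacturing (a cut-down of) the ambient element $a$ out of $b$ through the prescribed factorisation. For the reduction, note that $(D\otimes D_n\sub A\otimes M_n)$ again satisfies all the standing hypotheses: its associated expectation $\Phi\otimes E_n$ is hereditary by \Cref{lem:herexpstable}, it has dynamical comparison by \Cref{rmk:stabledynamicalcomparison}, and $A\otimes M_n$ inherits strict comparison (and the equality $\mathrm{QT}=\mathrm{T}$) from $A$. Hence it suffices to show that for $a,b\in A_+$ with $d_\tau(a)<d_\tau(b)$ for all $\tau\in\mathrm{T}(A)$ one has $a\precsimd b$, and throughout I would work with the hereditary reformulation of $\precsimd$ from \Cref{lem:herdiagbelow}, i.e.\ I would produce $u\in\her(\Phi(a))$, $v\in\her(\Phi(b))$ and a normaliser $w$ with $\|a-uwvbv^*w^*u^*\|<\varepsilon$.

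The core idea is to realise the comparison through a single positive element $c=wvbv^*w^*$ lying in the corner $\her(\Phi(a))$ and then to reshape $c$ into $a$ with $u$. Concretely, if I can arrange $c\in\her(\Phi(a))_+$ with $a\precsim c$ \emph{inside} $\her(\Phi(a))$, then the element implementing this Cuntz subequivalence can be taken inside $\her(\Phi(a))$ (because $a\in\her(\Phi(a))$), giving $u\in\her(\Phi(a))$ with $ucu^*\approx a$; the remaining conditions \ref{it:lemherdbelow2}--\ref{it:lemherdbelow4} of \Cref{lem:herdiagbelow} then hold by the very construction of $c$. The point of $u$ is precisely to build the off-diagonal structure of $a$ out of the a priori structureless transported mass $c$, so that only the \emph{rank} of $c$, and not its fine structure, needs to be controlled.

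To produce such a $c$ I would first convert the rank comparison $d_\tau(a)<d_\tau(b)$ into a comparison of diagonal projections. Using \Cref{lem dim functions ctdns} to pass to cut-downs $(a-\varepsilon)_+$, $(b-\delta)_+$ and gain a uniform gap, \Cref{lem not locally invertible element decreasing sequences} to replace the merely lower semicontinuous rank functions by continuous affine functions that are evaluations of elements of $\mathrm{C}^*(a)$ and $\mathrm{C}^*(b)$, the Choquet interpolation of \Cref{prop:choquetinterpol}, and finally \Cref{cor:evprojdense} to realise the interpolant as $\hat{q}$ for a projection $q\in D$, I would obtain a diagonal projection whose trace sits strictly between the relevant rank functions (capped by $d_\tau(\Phi(a))$, since $c$ is forced into the corner). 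I would then extract the corresponding portion of $b$ by a cut-down $v\in\her(\Phi(b))$ and transport it into the corner over $\supp(\Phi(a))$ by a single normaliser $w$, whose existence is furnished by dynamical comparison of the pair (\Cref{def:dyncomp}, in the stable form of \Cref{rmk:stabledynamicalcomparison}) together with the ordered-partition device of \Cref{prop:orderedprojs} and the zero-dimensionality of $\widehat{D}$ (so that a sum of bisection-type normalisers is again a single normaliser). Strict comparison of $A$ would then fine-tune $c=wvbv^*w^*$ to have the prescribed rank and to satisfy $a\precsim c$ in $\her(\Phi(a))$.

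The main obstacle is the bookkeeping forced by the discrepancy between $a$ and its diagonal image: since $u\in\her(\Phi(a))$, the transported element is effectively seen only through its compression to the corner, so the rank $d_\tau(c)$ is constrained both from below by $d_\tau(a)$ (to allow $a\precsim c$) and from above by $\min\{d_\tau(b),d_\tau(\Phi(a))\}$ (because $c$ must fit inside $\her(\Phi(a))$ and be dominated by $b$). The delicate point is the equality regime $d_\tau(a)=d_\tau(\Phi(a))$, where there is no strict room: here one is forced to take $c$ \emph{full} in the corner and to invoke Cuntz equivalence of full positive elements of $\her(\Phi(a))$ rather than strict comparison, and these two regimes must be reconciled uniformly over $\mathrm{T}(A)$ by a single choice of $c$. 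Making the transport respect the corner constraints $\supp(\Phi(u^*u))\sub\supp(\Phi(a))$ and $\supp(\Phi(vv^*))\sub\supp(\Phi(b))$ while being implemented by one normaliser is exactly where the interplay of dynamical comparison, tracial almost divisibility and ampleness is indispensable.
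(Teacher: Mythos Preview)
Your plan is essentially the paper's: interpolate diagonal projections $p,q$ between the relevant rank functions via \Cref{cor:evprojdense} and \Cref{prop:choquetinterpol}, place them into $\supp(\Phi(a))$ and $\supp(\Phi(b))$ using dynamical comparison, link them by a normaliser $w$ with $p=wqw^*$, and then use strict comparison twice (once to manufacture $u$ from $(a-\eta)_+\precsim p$, once to manufacture $v$ from $q\precsim b$). The reduction to $n=1$ is harmless, and your identification of the equality regime $d_\tau(a)=d_\tau(\Phi(a))$ as the crux is exactly right.

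The gap is your proposed resolution of that regime. ``Cuntz equivalence of full positive elements of $\her(\Phi(a))$'' is not a theorem: in a simple corner every nonzero positive element is full, so fullness carries no Cuntz information, and $a\in\her(c)$ does not imply $a\precsim c$. The paper does \emph{not} reconcile the two regimes by a single choice of $c$; it runs a trichotomy on $a$. If $a$ is not locally invertible, \Cref{lem not locally invertible element decreasing sequences} creates genuine strict room between $d_\tau(a-\eta)_+$ and $d_\tau(a-\eta')_+$ into which $\hat p$ is slotted, and hereditariness of $\Phi$ then forces $\tau(p)<d_\tau(\Phi(a))$ for free. If $a$ is locally invertible and its (continuous) rank coincides with that of $\Phi(a)$, one bypasses comparison on the $u$-side altogether: zero-dimensionality together with $a\in\her(\Phi(a))$ yields a projection $p$ with $\supp(p)\subset\supp(\Phi(a))$ and $\|a-pap\|<\varepsilon/3$, so $(a-\varepsilon/3)_+\precsim p$ follows from the approximation itself, not from strict comparison; then $\tau(p)\le d_\tau(\Phi(a))=d_\tau(a)<d_\tau(b)$ and the argument proceeds. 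The remaining case ($a$ locally invertible but $d_{\tau_0}(a)<d_{\tau_0}(\Phi(a))$ for some $\tau_0$) requires a further ingredient absent from your outline: one first upgrades to strict inequality at \emph{every} $\tau$ via an orthogonality trick, and then must interpolate $\hat p$ strictly below the pointwise minimum of the two lower semicontinuous affine maps $\tau\mapsto d_\tau(\Phi(a))$ and $\tau\mapsto d_\tau(b)$. That minimum need not be affine, so \Cref{prop:choquetinterpol} does not apply directly; the paper invokes \cite[Lemma~3.7, Proposition~3.4]{Thi20} to produce a single lower semicontinuous affine $f$ with $d_\tau(a)<f(\tau)\le\min\{d_\tau(\Phi(a)),d_\tau(b)\}$, after which the interpolation goes through.
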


\begin{proof}
If $A$ is finite-dimensional, then it is a matrix algebra due to simplicity, and diagonal comparison can be verified directly: if $a,b$ are positive matrices with the rank of $a$ being less than the rank of $b$ and $p_a, p_b$ are the (diagonal) support projections of the conditional expectations $E(a)$ and $E(b)$ respectively, then there is a permutation matrix $w$ (thus a normaliser of diagonal matrices) conjugating $p_b$ to a diagonal projection greater than $p_a$. The spectrum of $b$ is finite, so $b^{1/2}$ is locally invertible, and thus $b^{1/2}$ is also invertible in $\her(p_b)$. If $v\in\her(p_b)_+$ is the (local) inverse of $b^{1/2}$, we have that $a= a^{1/2}p_awvbv^*w^*p_aa^{1/2}$, i.e.\ $a\precsimd b$. We can thus restrict to the infinite-dimensional case, which is the nontrivial one.

Let $n\in\bb{N}$ and let $a,b\in (A\otimes M_n)_+^1$ be such that $d_\tau (a) < d_\tau(b)$ for all $\tau\in\mathrm{T}(A)$. Let $\Phi\colon A\otimes M_n \to D \otimes D_n$ denote the associated conditional expectation which is hereditary by \Cref{lem:herexpstable}. We will show that $a\precsimd b$ by distinguishing cases based on local invertibility of $a$. Let $\varepsilon>0$.

Assume first that $a$ is not locally invertible. By looking at three successive terms that are far out enough in the sequence obtained by employing \Cref{lem not locally invertible element decreasing sequences}, we find $0<\eta' < \eta <\varepsilon$ and positive elements $\tilde{a},\bar{a}\in\mathrm{C}^\ast(a)_+^1$ such that
\[
d_\tau(a-\eta)_+<\tau(\tilde{a}) < \tau(\bar{a}) < d_\tau(a-\eta')_+
\]
for all $\tau \in \mathrm{T}(A)$. Now by compactness of $\mathrm{T}(A)$ and  \Cref{cor:evprojdense} we can approximate the continuous map $\tau\mapsto \frac{1}{2}(\tau(\bar{a})+\tau(\tilde{a}))$ sufficiently well to obtain a projection $p\in D\otimes D_n$ such that 
\begin{equation}\label{eq:proofctdns}
d_\tau(a-\eta)_+ < \tau(p) <d_\tau(a-\eta')_+, \quad \tau\in\mathrm{T}(A).
\end{equation}
Since $\tau(p) < d_\tau(b)$ for all $\tau\in\mathrm{T}(A)$, by \Cref{prop:choquetinterpol} and \Cref{cor:evprojdense} we find a projection $q \in D\otimes D_n$ such that
\begin{equation}\label{eq:proofprojns}
\tau(p) < \tau(q) < d_\tau(b), \quad \tau\in \mathrm{T}(A).
\end{equation}
Since $\Phi$ is hereditary, by \eqref{eq:proofctdns} we have that $\tau(p) < d_\tau(\Phi(a))$ for all $\tau\in\mathrm{T}(A)$. By \Cref{lem dim functions ctdns} and since $p$ is a projection, there is some $\theta>0$ such that $\tau(p)<d_\tau(\Phi(a)-\theta)_+$ for all $\tau \in \mathrm{T}(A)$. By zero-dimensionality of the spectrum of $D \otimes D_n$, there is a clopen subset of $\supp(\Phi(a))$ that contains $\supp(\Phi(a)-\theta)_+$, so, if $p'\in D\otimes D_n$ denotes the indicator function of this clopen set, then $\supp(p') \subset \supp(\Phi(a))$ and $\tau(p) < \tau(p')$ for all $\tau\in\mathrm{T}(A)$. Using dynamical comparison of $(D \otimes D_n \subset A \otimes M_n)$ which is clear by \Cref{rmk:stabledynamicalcomparison}, upon replacing $p$ by a subprojection of $p'$, we can assume without loss of generality that $\supp(p) \subset \supp(\Phi(a))$. Similarly, we can assume that $\supp(q) \subset \supp(\Phi(b))$. By \eqref{eq:proofprojns} and dynamical comparison, there is $w\in\mc{N}_{A\otimes M_n}(D \otimes D_n)$ such that $p=wqw^* = pwqw^*p$. By \eqref{eq:proofctdns} and strict comparison there is $s\in A\otimes M_n$ such that $(a-\eta)_+ \approx_{(\varepsilon-\eta)/2} sps^*$. By \eqref{eq:proofprojns} and strict comparison there is $t \in A \otimes M_n$ such that $q \approx_{\delta} qtbt^*q$, where $\delta\coloneqq \frac{\varepsilon-\eta}{2\|spw\|^2+1}$. Setting $u \coloneqq sp$ and $v\coloneqq qt$, we see that $\supp(\Phi(u^*u)) \subset \supp(p) \subset \supp(\Phi(a))$ and $\supp(\Phi(vv^*)) \subset \supp(q) \subset \supp(\Phi(b))$, and that $a \approx_\varepsilon uwvbv^*w^*u^*$, which shows that $a \precsimd b$.

Assume now that $a$ is locally invertible, in which case the rank of $a$ is weak$^\ast$-continuous. Since $a \in \her(\Phi(a))$, we have that $d_\tau(a) \le d_\tau(\Phi(a))$ for all $\tau\in\mathrm{T}(A)$.

If the rank of $a$ agrees with the rank of $\Phi(a)$, then by \Cref{prop:choquetinterpol} and \Cref{cor:evprojdense} we obtain a projection $q \in D\otimes D_n$ with $d_\tau(\Phi(a)) < \tau(q) < d_\tau(b)$, $\tau\in\mathrm{T}(A)$. Arguing as before, by dynamical comparison we can assume without loss of generality that $\supp(q) \subset \supp(\Phi(b))$. Since $a\in\her(\Phi(a))$, by zero-dimensionality of the spectrum of $D\otimes D_n$ there is a projection $p \in D\otimes D_n$ such that $\supp(p)\subset \supp(\Phi(a))$ and $\|a - pap\|<\varepsilon/3$, whence $(a-\varepsilon/3)_+ \precsim p$. Since $\tau(p) \le d_\tau(\Phi(a)) < \tau(q) < d_\tau(b)$ for all $\mathrm{T}(A)$, we can proceed exactly as in the last part of the previous case with $\eta=\varepsilon/3$ therein.

If on the other hand there is some $\tau_0\in\mathrm{T}(A)$ such that $d_{\tau_0}(a) < d_{\tau_0}(\Phi(a))$, then $\her(a)$ is a unital $\mathrm{C}^\ast$-algebra that is a proper subset of $\her(\Phi(a))$, so there is some $c \in \her(\Phi(a))$ such that $0\ne z\coloneqq (1_A - 1_{\her(a)})c \in \her(\Phi(a))$. Since $A$ is simple, $d_\tau(zz^*)>0$ for all $\tau\in\mathrm{T}(A)$, and since $a \perp zz^*$ we have $d_\tau(a) + d_\tau(zz^*) = d_\tau(a+zz^*)\le d_\tau(\Phi(a))$, which is to say that $d_\tau(a) < d_\tau(\Phi(a))$ for all $\tau\in\mathrm{T}(A)$. Now by \cite[Lemma~3.7]{Thi20} there is a lower semicontinuous, affine map $f\colon \mathrm{T}(A)\to [0,1]$ satisfying
\begin{equation}\label{eq:infmap}
f(\tau)\le \min\{d_\tau(\Phi(a)) , d_\tau(b) \}, \quad \tau\in \mathrm{T}(A),
\end{equation}
and equality is achieved in \eqref{eq:infmap} on extremal traces. By \cite[Proposition~3.4]{Thi20}, since the rank of $a$ is less than $f$ on the extreme boundary of $\mathrm{T}(A)$, we actually have $d_\tau(a) < f(\tau)$ for all $\tau \in \mathrm{T}(A)$. Since the rank of $a$ is continuous, by \Cref{prop:choquetinterpol} and \Cref{cor:evprojdense}, we find a projection $p \in D\otimes D_n$ such that $d_\tau(a) < \tau (p) < f(\tau)$, for which, by dynamical comparison, we can assume without loss of generality that $\supp(p) \subset \supp(\Phi(a))$. Since $\tau(p) < d_\tau(b)$, again by \Cref{prop:choquetinterpol} and \Cref{cor:evprojdense} we find a projection $q\in D\otimes D_n$ with $\tau(p) < \tau(q) <d_\tau(b)$, for which we may assume that $\supp(q) \subset \supp(\Phi(b))$, and the proof now concludes as in the previous case.

\end{proof}
Combining Proposition~\ref{prop:strict-and-dynamical-imply-diagcomp} together with Proposition~\ref{prop:diagcomptodyncomp}, we obtain our main result; cf. \Cref{intro main thm}.

\begin{theorem}\label{thm:main}
Let $(D \subset A)$ be a unital, ample diagonal pair with hereditary associated conditional expectation, and assume that $A$ is separable, simple and that $\mathrm{QT}(A)=\mathrm{T}(A)$ is nonempty. The following are equivalent:
\begin{enumerate}[label=\normalfont(\roman*)]
\item $(D \subset A)$ has dynamical comparison and $A$ has strict comparison.
\item $(D \subset A)$ has diagonal comparison.
\end{enumerate}
\end{theorem}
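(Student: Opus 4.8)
The plan is to establish the two implications of the equivalence separately, leveraging the machinery assembled earlier in the section; the theorem is the formal conjunction of two results already in hand.

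For the implication \textnormal{(i)} $\implies$ \textnormal{(ii)}, I would simply invoke \Cref{prop:strict-and-dynamical-imply-diagcomp}: under the present standing hypotheses (ample diagonal pair, hereditary associated expectation, $A$ separable, simple, with $\mathrm{QT}(A)=\mathrm{T}(A)$ nonempty), the concurrence of dynamical comparison of the pair and strict comparison of $A$ already yields diagonal comparison, so nothing further is needed in this direction.

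For the converse \textnormal{(ii)} $\implies$ \textnormal{(i)}, I would split the conclusion into its two constituent parts. First, that diagonal comparison implies dynamical comparison is precisely \Cref{prop:diagcomptodyncomp}, whose hypotheses coincide with ours (this is where heredity of $\Phi$ is essential). Second, to recover strict comparison of $A$, I would unwind the definitions: if $a \precsimd b$, then condition~\ref{it:defdiagbelow1} of \Cref{def:diagbelow} exhibits $a$ as a norm-limit of elements $uwvbv^*w^*u^*$, so in particular $a \precsim b$ in the ordinary Cuntz sense. Since \Cref{def:diagcomp} quantifies over all matrix amplifications, diagonal comparison of the pair thus automatically forces strict comparison of $A$. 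Together these two observations give \textnormal{(i)}.

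The genuinely substantive content lives entirely inside the two cited propositions rather than in their assembly; the main obstacle is therefore upstream, concentrated in \Cref{prop:strict-and-dynamical-imply-diagcomp}, where one must handle the case analysis according to whether $a$ is locally invertible and, when it is, whether the rank of $a$ agrees with that of $\Phi(a)$, using tracial almost divisibility via \Cref{cor:evprojdense}, the Choquet interpolation of \Cref{prop:choquetinterpol}, and heredity of the expectation in matrix amplifications (\Cref{lem:herexpstable}) to produce the intermediate projections $p$ and $q$ with controlled supports. At the level of the theorem itself, the assembly is purely formal, and the only point requiring a moment's care is the elementary observation that $\precsimd$ refines ordinary Cuntz subequivalence, which is what renders the strict-comparison half of \textnormal{(ii)} $\implies$ \textnormal{(i)} immediate.
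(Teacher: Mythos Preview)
Your proposal is correct and mirrors the paper's own treatment: the theorem is stated immediately after \Cref{prop:strict-and-dynamical-imply-diagcomp} and is proved by combining that proposition with \Cref{prop:diagcomptodyncomp}, together with the remark (made right after \Cref{def:diagcomp}) that $\precsimd$ refines Cuntz subequivalence so that diagonal comparison trivially entails strict comparison. There is no additional argument in the paper beyond this assembly.
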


As a corollary, we obtain relations between diagonal comparison, diagonal dimension (cf.\ \cite[Definition~2.1]{LiLiaWin23}), and tracial $\mathcal{Z}$-stability of a pair (cf.\ \cite[Definition~3.2]{LiaTik22}), in the case that the pair arises from a free minimal action of an amenable group on a Cantor space.

\begin{corollary}
Let $G \acts X$ be a free minimal action of a countable amenable group on a compact, infinite, zero-dimensional metrisable space. Consider the following statements:
\begin{enumerate}[label=\normalfont(\roman*)]
\item\label{itc1} $\dim_{\mathrm{diag}}(C(X)\subset C(X)\rtimes_\mathrm{r} G) <\infty$.
\item\label{itc2} $(C(X) \subset C(X)\rtimes_\mathrm{r} G)$ is tracially $\mathcal{Z}$-stable.
\item\label{itc2.5} $G\acts X$ has dynamical comparison. 
\item\label{itc3} $(C(X) \subset C(X)\rtimes_\mathrm{r} G)$ has diagonal comparison.
\end{enumerate}
Then \ref{itc1}$\implies$\ref{itc2}$\iff$\ref{itc2.5}$\iff$\ref{itc3}.
\end{corollary}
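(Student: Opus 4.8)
The plan is to deduce everything from the main theorem \Cref{thm:main} together with known equivalences relating dynamical regularity to regularity of the crossed product, using \emph{almost finiteness} as a hub. The first task is to check that $(C(X) \subset C(X)\rtimes_\mathrm{r} G)$ meets the hypotheses of \Cref{thm:main}. Freeness and minimality of $G \acts X$ make it a unital diagonal pair with $C(X)\rtimes_\mathrm{r} G$ simple and infinite-dimensional (as $X$ is infinite), and since $X$ is a Cantor space the pair is ample; separability is clear from $X$ metrisable and $G$ countable. Amenability of $G$ yields nuclearity, hence exactness, so $\mathrm{QT}(A)=\mathrm{T}(A)$ by \cite{Hag14}, and this set is nonempty since a minimal action of an amenable group admits an invariant Borel probability measure. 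Amenability also guarantees, via \Cref{rmk:herexponcrossedprod}, that the canonical expectation $E\colon C(X)\rtimes_\mathrm{r}G \to C(X)$ is hereditary. Thus \Cref{thm:main} applies.

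Next I would identify dynamical comparison of the \emph{pair} (\Cref{def:dyncomp}) with dynamical comparison of the \emph{dynamics} $G\acts X$, statement \ref{itc2.5}. Since $X$ is zero-dimensional, projections in $C(X)$ are exactly indicator functions of clopen sets, and $\tau(\chi_U)=\mu_\tau(U)$ under the standard correspondence between traces on the crossed product and $G$-invariant measures on $X$; by \Cref{rmk:dynamicalsubequivalencenormaliser}, a clopen $U$ is dynamically subequivalent to $V$ precisely when $\chi_U \prec_{(C(X)\subset A)}\chi_V$. Hence \ref{itc2.5} is literally dynamical comparison of the pair, and \Cref{thm:main} reads: \ref{itc3} holds if and only if $G\acts X$ has dynamical comparison \emph{and} $A$ has strict comparison. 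The arrow \ref{itc3}$\implies$\ref{itc2.5} is then immediate (cf.\ also \Cref{prop:diagcomptodyncomp}). For \ref{itc2.5}$\implies$\ref{itc3} I must upgrade \ref{itc2.5} to strict comparison of $A$: for a free minimal action of an amenable group on a zero-dimensional space, dynamical comparison is equivalent to almost finiteness by \cite[Theorem~6.1]{KerSza20}, and almost finiteness makes $A$ be $\mathcal{Z}$-stable by \cite[Theorem~12.4]{Ker20}, whence $A$ has strict comparison. Feeding this into \Cref{thm:main} gives \ref{itc3}, so \ref{itc2.5}$\iff$\ref{itc3}.

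The remaining equivalence \ref{itc2}$\iff$\ref{itc2.5} and the implication \ref{itc1}$\implies$\ref{itc2} are then assembled around the same hub. Tracial $\mathcal{Z}$-stability of the pair is equivalent to almost finiteness of $G\acts X$ by \cite{LiaTik22, KopLiaTikVac24}, and almost finiteness is equivalent to dynamical comparison by \cite[Theorem~6.1]{KerSza20}, giving \ref{itc2}$\iff$\ref{itc2.5}. For \ref{itc1}$\implies$\ref{itc2}, finiteness of the diagonal dimension forces the action to have finite tower dimension by the lower bound of \cite[Theorem~5.4]{LiLiaWin23}, finite tower dimension yields almost finiteness through the dynamical Toms--Winter implications of \cite{Ker20}, and almost finiteness is again equivalent to \ref{itc2}. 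I expect the main obstacle here to be conceptual rather than computational: the genuine content is the identification of the two notions of dynamical comparison and the one-directional passage from dynamics to $\mathrm{C}^\ast$-regularity (dynamical comparison $\implies$ strict comparison of $A$), which is exactly what allows \Cref{thm:main} to close the loop; once these are secured, the corollary reduces to bookkeeping among the cited equivalences.
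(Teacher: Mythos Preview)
Your proposal is correct and follows essentially the same route as the paper: verify the hypotheses of \Cref{thm:main} (in particular hereditariness of the expectation via \Cref{rmk:herexponcrossedprod} and $\mathrm{QT}=\mathrm{T}$ via exactness), identify dynamical comparison of the pair with that of the action, and then use the chain dynamical comparison $\Leftrightarrow$ almost finiteness $\Rightarrow$ $\mathcal{Z}$-stability $\Rightarrow$ strict comparison to close \ref{itc2.5}$\Leftrightarrow$\ref{itc3}, with \ref{itc1}$\Rightarrow$\ref{itc2}$\Leftrightarrow$\ref{itc2.5} coming from \cite{LiLiaWin23,Ker20,KerSza20,LiaTik22,KopLiaTikVac24} exactly as you indicate. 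The only cosmetic difference is that the paper cites \cite[Corollary~3.7]{LiaTik22} directly for \ref{itc2}$\Leftrightarrow$\ref{itc2.5} and phrases the passage from \ref{itc2.5} to strict comparison slightly more tersely, whereas you make the almost-finiteness hub explicit throughout; the content is the same.
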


\begin{proof}
\ref{itc2}$\iff$\ref{itc2.5} is \cite[Corollary~3.7]{LiaTik22}. The equivalence \ref{itc2.5}$\iff$\ref{itc3} follows from \Cref{thm:main} which is applicable since the conditional expectation $C(X)\rtimes_\mathrm{r} G \to C(X)$ is hereditary (see \Cref{rmk:herexponcrossedprod}), together with the fact that dynamical comparison of $G \acts X$ and zero-dimensionality of $X$ imply strict comparison of $C(X)\rtimes_\mathrm{r} G$ by \cite[Theorem~12.4]{Ker20}. Finally, if $(C(X) \subset C(X)\rtimes_\mathrm{r} G)$ has finite diagonal dimension, then by \cite[Theorem~5.4]{LiLiaWin23} and \cite[Theorem~7.2]{Ker20} $G \acts X$ has dynamical comparison, so \ref{itc1}$\implies$\ref{itc2.5}.

\end{proof}

\section{Hereditary conditional expectations}\label{sec:herexp}
\noindent In this final section we explore further the question of when a conditional expectation is hereditary. The following auxiliary proposition is proved essentially as \cite[Proposition~4.2]{Win12}, and so we omit the details (see also \cite[Remark~2.2~(iii)]{LiLiaWin23}).

\begin{proposition}\label{lemma: approx commuting coordinates}
Let $(D\subset A)$ be a unital $\mathrm{C}^*$-pair with $\dim_\mathrm{diag}(D\subset A)=d<\infty$. Then, given $a\in A_{+}^1$ and $\delta>0$, there exist a finite-dimensional $\mathrm{C}^*$-algebra $F=\bigoplus_{j=0}^dF^{(j)}$ with a masa $D_F=\bigoplus_{j=0}^dD_{F^{(j)}} \subset F$ and maps
\[
\begin{tikzcd}
A\ar[dr,"\psi"]&&A\\
&F\ar[ur,"\varphi"]&
\end{tikzcd}
\]
such that
\begin{enumerate}[label=\normalfont(\roman*)]
\item\label{itddlem1} $\psi$ is completely positive and contractive (c.p.c.),
\item\label{itddlem2} the composition $\varphi\psi$ is contractive,
\item\label{itddlem3} $\|\varphi\psi(a)-a\|<\delta$,
\item\label{itddlem4} $\psi(D)\subset D_F$,
\item\label{itddlem5} $\varphi^{(j)}:=\varphi\vert_{F^{(j)}}$ is c.p.c.\ order zero and $\varphi^{(j)}(\mc{N}_{F^{(j)}}(D_{F^{(j)}}))\subset\mc{N}_A(D)$ for all $j=0,\dots,d$,
\item\label{itddlem6} for any $j=0,\dots,d$, and any matrix summand $E$ appearing in $F^{(j)}$, we have
\[
\|\varphi_E^{(j)}\psi_E^{(j)}(a)-a^{1/2}\varphi_E^{(j)}\psi_E^{(j)}(1_A)a^{1/2}\|<\delta,
\]
\end{enumerate}
with $\varphi_E^{(j)} \coloneqq \varphi^{(j)}\vert_E$ and $\psi^{(j)}_E \coloneqq \pi_E\circ\psi$, where $\pi_E$ is the canonical surjection $F\to E$.
\end{proposition}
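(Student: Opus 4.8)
The plan is to follow the proof of \cite[Proposition~4.2]{Win12}, which treats the same statement for nuclear dimension and decomposition rank in the absence of a diagonal, and to carry the diagonal data through the argument.

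First I would unwind the definition of diagonal dimension (cf.\ \cite[Definition~2.1]{LiLiaWin23}). Applied to the finite set $\{1_A,a,a^{1/2}\}\subset A$ with a tolerance $\varepsilon>0$ to be fixed at the end, it produces, essentially by definition, a finite-dimensional $F=\bigoplus_{j=0}^d F^{(j)}$ with a compatible masa $D_F$ and c.p.c.\ maps $\psi\colon A\to F$, $\varphi\colon F\to A$ realising conditions \ref{itddlem1}--\ref{itddlem5}: the two contractivity statements, the approximation $\|\varphi\psi(a)-a\|<\varepsilon$, the masa compatibility $\psi(D)\subset D_F$, and the fact that each $\varphi^{(j)}$ is c.p.c.\ order zero and maps normalisers of $D_{F^{(j)}}$ into $\mc{N}_A(D)$. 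Taking $\varepsilon<\delta$ already secures \ref{itddlem3}, so the real content lies entirely in \ref{itddlem6}.

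For \ref{itddlem6} I would import Winter's argument. On each matrix summand $E$ of $F^{(j)}$ the restriction $\varphi_E^{(j)}$ is c.p.c.\ order zero, so by the Winter--Zacharias structure theorem it factors as $\varphi_E^{(j)}(x)=h_E\,\pi_E(x)$ with $h_E:=\varphi_E^{(j)}(1_E)$ a positive contraction commuting with the image of a supporting $^\ast$-homomorphism $\pi_E$; in particular the order-zero relation $\varphi_E^{(j)}(x)\varphi_E^{(j)}(y)=\varphi_E^{(j)}(1_E)\varphi_E^{(j)}(xy)$ holds. Combining this relation with the good approximations $\|\varphi\psi(a^{1/2})-a^{1/2}\|<\varepsilon$ and $\|\varphi\psi(a)-a\|<\varepsilon$, one extracts a factor $a^{1/2}$ from either side of $\varphi_E^{(j)}\psi_E^{(j)}(a)$, blockwise, showing that it is close to $a^{1/2}\varphi_E^{(j)}\psi_E^{(j)}(1_A)a^{1/2}$. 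This is the technical heart of the statement, and it is exactly \cite[Proposition~4.2]{Win12}; the point to stress is that it is \emph{not} a formal consequence of $\varphi\psi(a)\approx a$ but genuinely uses the order-zero structure per block. As every blockwise estimate is governed by $\varepsilon$ alone, shrinking $\varepsilon$ sufficiently relative to $\delta$ yields \ref{itddlem6} on every summand simultaneously.

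The step I expect to be the main obstacle—and the only genuinely new point beyond \cite{Win12}—is the interaction between Winter's blockwise $a^{1/2}$-manipulation and the diagonal data: one must verify that producing \ref{itddlem6} does not spoil \ref{itddlem4} or \ref{itddlem5}. The key observation is that these manipulations only sandwich by $a^{1/2}$ and stay within the intrinsic order-zero structure of each $\varphi^{(j)}$, so they touch neither the inclusion $\psi(D)\subset D_F$ nor the normaliser-preserving property of the $\varphi^{(j)}$; both are supplied by the diagonal-dimension approximation at the outset (cf.\ \cite[Remark~2.2]{LiLiaWin23}) and persist unchanged. Granting this, assembling the blockwise estimates into \ref{itddlem6} is routine bookkeeping.
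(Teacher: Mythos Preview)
Your proposal is correct and takes essentially the same approach as the paper, which omits the proof with the remark that it ``is proved essentially as \cite[Proposition~4.2]{Win12}\dots (see also \cite[Remark~2.2~(iii)]{LiLiaWin23})''. You have identified precisely these two ingredients---Winter's blockwise order-zero manipulation for \ref{itddlem6} and the observation that the diagonal-dimension approximation supplies \ref{itddlem4} and \ref{itddlem5} from the outset and that these are untouched by the subsequent estimates---so there is nothing to add.
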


\begin{remark}\label{rmk:cpc-order-zero-functional-calculus}
For the proof of \Cref{prop:ddher} below we will need that order zero maps and order zero functional calculus are compatible with preservation of normalisers: if $(D_A \subset A)$ and $(D_B \subset B)$ are unital sub-$\mathrm{C}^\ast$-algebras with $D_A, D_B$ abelian and $\varphi\colon A\to B$ is a c.p.c.\ order zero map that preserves normalisers, then we automatically have $\varphi(D_A) \subset D_B$ (cf.\ \cite[Proposition~1.9~(i)]{LiLiaWin23} or \cite[Lemma~1.6]{LiaTik22}). Moreover, for a positive $f\in C_0(0,1]$, the c.p.c.\ order zero map $f(\varphi)\colon A\to B$ defined by order zero functional calculus (see \cite[Corollary~3.2]{WinZac09}) preserves normalisers: indeed, let $\pi_\varphi$ denote the $^*$-homomorphism associated to $\varphi$ by the structure theorem of order zero maps \cite[Theorem~2.3]{WinZac09}, so $\varphi(.)=\varphi(1_A)\pi_\varphi(.)$. For $v\in\mc{N}_A(D_A)$, we have $f(\varphi)(v)=f(\varphi(1_A))\cdot \pi_\varphi(v)$. Uniformly approximating $f$ by polynomials with zero constant term, we can find a sequence $(d_n)_{n=1}^\infty \subset D_B$ such that $f(\varphi(1_A))=\lim_{n\to\infty} d_n\cdot \varphi(1_A)$, whence $f(\varphi)(v) = \lim_{n\to\infty} d_n \varphi(1_A)\pi_\varphi(v)=\lim_{n\to\infty} d_n \varphi(v)\in\mc{N}_B(D_B)$.
\end{remark}

\begin{proposition}\label{prop:ddher}
Let $(D \subset A)$ be a unital diagonal pair with associated conditional expectation $\Phi\colon A \to D$. If $\dim_\mathrm{diag}(D \subset A) < \infty$, then $\Phi$ is hereditary. 
\end{proposition}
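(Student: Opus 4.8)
The plan is to prove that every $a\in A_+^1$ lies in $\her(\Phi(a))$ by approximating $a$ in norm by elements that manifestly belong to $\her(\Phi(a))$ and using that this hereditary subalgebra is closed. The engine is \Cref{lemma: approx commuting coordinates}: given $\delta>0$ it produces $F=\bigoplus_{j=0}^dF^{(j)}$ and maps $\psi,\varphi$ with $\|\varphi\psi(a)-a\|<\delta$, $\psi(D)\subset D_F$, each $\varphi^{(j)}$ completely positive order zero and carrying $\mc{N}_{F^{(j)}}(D_{F^{(j)}})$ into $\mc{N}_A(D)$, together with the approximate multiplicativity over $a$ of item \ref{itddlem6}. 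The first task is to upgrade the diagonal compatibility of these maps to a compatibility with $\Phi$ itself.

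I would begin with two structural observations. A positive normaliser $v$ satisfies $v^2=v^*v\in D$, hence $v=(v^2)^{1/2}\in D$; so \ref{itddlem5} forces $\varphi^{(j)}(D_{F^{(j)}})\subset D$. Writing $w^{(j)}_{st}:=\varphi^{(j)}(f_{st})$ for the matrix units $f_{st}$ of a summand of $F^{(j)}$, the diagonal images $w^{(j)}_{ss}$ are thus positive elements of $D$, and order zero gives $w^{(j)}_{ss}w^{(j)}_{tt}=0$ for $s\ne t$, so their open supports are disjoint, while $(w^{(j)}_{st})^*w^{(j)}_{st}=(w^{(j)}_{tt})^2$ and $w^{(j)}_{st}(w^{(j)}_{st})^*=(w^{(j)}_{ss})^2$. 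Feeding these into the Kadison--Schwarz inequality for $\Phi$ yields, for $s\ne t$, $\supp(\Phi(w^{(j)}_{st}))\subset\supp(w^{(j)}_{ss})\cap\supp(w^{(j)}_{tt})=\emptyset$, whence
\[
\Phi\circ\varphi^{(j)}=\varphi^{(j)}\circ E_{F^{(j)}},
\]
the intertwiner between $\Phi$ and the canonical conditional expectation $E_{F^{(j)}}$ onto $D_{F^{(j)}}$.

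With the intertwiner in hand the self-hereditary step is clean. In the finite-dimensional pair $(D_{F^{(j)}}\subset F^{(j)})$ the expectation $E_{F^{(j)}}$ is hereditary (a positive matrix is supported on the support of its diagonal), so $\psi^{(j)}(a)\in\her(E_{F^{(j)}}\psi^{(j)}(a))$; since order zero maps preserve hereditary subalgebras, applying $\varphi^{(j)}$ and the intertwiner gives $\varphi^{(j)}\psi^{(j)}(a)\in\her(\Phi(\varphi^{(j)}\psi^{(j)}(a)))$. As $\Phi(\varphi\psi(a))=\sum_j\Phi(\varphi^{(j)}\psi^{(j)}(a))$ is a sum of positive elements of $D$, each summand's hereditary subalgebra sits inside $\her(\Phi(\varphi\psi(a)))$, and therefore $\varphi\psi(a)\in\her(\Phi(\varphi\psi(a)))$.

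It remains to transfer this self-containment across $\Phi(\varphi\psi(a))\approx_\delta\Phi(a)$. Here I would use zero-dimensionality of $\widehat{D}$ to reduce the goal to producing, for each prescribed tolerance, a clopen $L$ with $\{\Phi(a)\ge2\delta\}\subset L\subset\{\Phi(a)>\delta\}\subset\supp(\Phi(a))$ and with $\chi_{L^c}\,\varphi\psi(a)\,\chi_{L^c}$ small in norm; then $\chi_L\varphi\psi(a)\chi_L\in\her(\Phi(a))$ is norm-close to $a$, and letting $\delta\to0$ finishes. On $L^c\subset\{\Phi(a)\le2\delta\}$ all the diagonal data $\Phi(\varphi^{(j)}\psi^{(j)}(a))\le\Phi(a)+\delta$ are uniformly small, and the decisive point is a per-summand estimate controlling $\|\chi_{L^c}\varphi_E^{(j)}\psi_E^{(j)}(a)\chi_{L^c}\|$ by the small diagonal value plus $O(\delta)$; once available, orthogonality within each colour turns the sum over the matrix summands into a maximum, so only the fixed number $d+1$ of colours is paid for. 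I expect this last estimate to be the main obstacle: the naive bound $\varphi_E^{(j)}\psi_E^{(j)}(a)\le\varphi_E^{(j)}\psi_E^{(j)}(1_A)$ discards the diagonal weighting and fails, since a spread-out positive matrix has norm far exceeding its diagonal, so the estimate must genuinely exploit the approximate identity \ref{itddlem6} tying $\varphi_E^{(j)}\psi_E^{(j)}(a)$ to $a^{1/2}\varphi_E^{(j)}\psi_E^{(j)}(1_A)a^{1/2}$, with the crucial feature that it is the number of colours, not of levels, that enters the final summation.
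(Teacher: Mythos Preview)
Your structural setup is sound, and the intertwiner identity $\Phi\circ\varphi^{(j)}=\varphi^{(j)}\circ E_{F^{(j)}}$ is a clean observation (correct via Kadison--Schwarz as you say). The reduction to bounding $\|f'\varphi_E^{(j)}\psi_E^{(j)}(a)f'\|$ for each matrix summand $E$, with orthogonality within a colour turning the intra-colour sum into a maximum, also matches the paper. But the proof stops exactly where the work lies: you explicitly flag the per-summand estimate as ``the main obstacle'' and do not prove it, nor propose a mechanism beyond noting that \ref{itddlem6} must be involved. That is a genuine gap, because knowing that $\Phi(f'\varphi_E\psi_E(a)f')$ is small does not by itself bound the norm --- the positive element can be spread across the off-diagonal matrix units, exactly the failure mode you describe.

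The paper's device is as follows. Pick an irreducible representation $\pi$ of the ideal $J_E=\langle\varphi_E(E)\rangle$ realising $\|h_E\|$, where $h_E=f'\varphi_E\psi_E(a)f'$. Because the diagonals $\varphi_E(e_{ii})$ lie in $D$ with pairwise disjoint supports, $\pi$ is forced to land in $M_r$ (with $E=M_r$) and singles out a character $\sigma$ on a hereditary piece of $D$; extending $\sigma$ to $D$ and then to $A$ as $\sigma\circ\Phi$ (this is where the unique extension property enters) yields a unital c.p.\ map $\varrho\colon A\to M_r$ with $\varrho(h_E)=\pi(h_E)$ and whose diagonal entries are dominated by $\|\Phi(\,\cdot\,)\|$. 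Combined with \ref{itddlem6} this gives $\|h_E\|\le\delta + r\cdot\|f'\Phi(a)f'\|$. Note the factor $r$: the matrix size \emph{does} enter the estimate, contrary to your closing remark, and the paper absorbs it by choosing the cutoff for $f$ to be $\eta/(2(d+1)r_{\max})$ with $r_{\max}$ the largest matrix size in $F$ --- harmless since $r_{\max}$ is fixed once the approximation is. Two minor points: you invoke zero-dimensionality of $\widehat{D}$, which is not among the hypotheses; Urysohn's lemma (a continuous $f$ rather than $\chi_L$) is all that is needed. And your detour through $\varphi\psi(a)\in\her(\Phi(\varphi\psi(a)))$, while correct, is not used in the paper and becomes unnecessary once the per-summand bound is available.
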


\begin{proof}
Let $a\in A_+$ with $\|a\|=1$ and let $\varepsilon>0$. We will find $f\in \her(\Phi(a))_{+}^1$ such that $\|a-faf\|<\varepsilon$. Take $\eta>0$ small enough so that $2\eta+2\sqrt{2\eta}<\varepsilon$. By \Cref{lemma: approx commuting coordinates} we obtain a finite-dimensional $\mathrm{C}^\ast$-algebra $F=\bigoplus_{j=0}^dF^{(j)}$ with a masa $D_F=\bigoplus_{j=0}^dD_{F^{(j)}}$ and c.p.\ maps $A\xrightarrow{\psi}F\xrightarrow{\varphi}A$ satisfying conditions \ref{itddlem1}--\ref{itddlem6} there, with $\delta \coloneqq \frac{\eta}{2(d+1)}$. Moreover, we identify $F$ with a direct sum of matrix algebras so that the restriction of $D_F$ on any matrix summand of $F$ is just the diagonal matrices therein (cf.\ \cite[Example~1.5]{LiLiaWin23}).

Let $r_{\max} \in\bb{N}$ be the maximum matrix size among the matrix summands of $F$. By Urysohn's lemma there is $f\in D_{+}^1$ such that $\supp(f)\subset\supp(\Phi(a))$ and $f=1$ on the subset of the spectrum of $D$ where $\Phi(a)\ge\frac{\eta}{2(d+1)r_{\max}}$, and in particular $f\in\her(\Phi(a))$. Set $f'\coloneqq 1-f\in D_{+}^1$. Note that $a=f a f + f' a f + f a f' + f' a f'$ and therefore
\begin{align*}
\|a-faf\|& \le\|f' a f'\|+2 \|faf'\| \\
&\le \|f' a f'\| +2 \|a^{1/2}f'\| \\
&= \|f' a f'\| +2\|f' a f'\|^{1/2},
\end{align*}
whence, by our choice of $\eta$, it suffices to show that $\|f' \varphi\psi(a) f'\|<\eta$, since then we also have $\|f'af'\|<2\eta$, and thus $\|a-faf\|<2\eta + 2\sqrt{2\eta}<\varepsilon$. Setting $\psi^{(j)}\coloneqq\pi_{F^{(j)}}\circ\psi$ where $\pi_{F^{(j)}}$ is the canonical surjection $F\to F^{(j)}$, since $\varphi\psi=\sum_{j=0}^d\varphi^{(j)}\psi^{(j)}$, it further suffices to show that $\|f' \varphi^{(j)}\psi^{(j)}(a) f'\|<\frac{\eta}{d+1}$ for all $j=0,\dots,d$, since then it follows by the triangle inequality that 
\begin{equation*}
\|f' \varphi\psi(a) f'\|\le\sum_{j=0}^d\|f' \varphi^{(j)}\psi^{(j)}(a) f'\|<\sum_{j=0}^d\frac{\eta}{(d+1)}=\eta.
\end{equation*}
Fix $j_0\in\{0,\dots,d\}$ and set $B\coloneqq \mathrm{C}^*(\varphi^{(j_0)}(F^{(j_0)}), D)\subset A$. For a matrix summand $E$ appearing in $F^{(j_0)}$ let $J_E:=\langle\varphi_E^{(j_0)}(E)\rangle\trianglelefteq B$ be the ideal generated by $\varphi_E^{(j_0)}(E)$ in $B$. (These ideals as well as $B$ itself are very well-behaved since $\varphi^{(j_0)}$ is compatible with $D$; in fact, $B$ and the ideals $J_E$ are subhomogeneous, as we will see below.)

If $E, K$ are distinct matrix summands appearing in $F^{(j_0)}$, then some calculations show that $J_EJ_K=\{0\}$: to see this, by definition of $B$ and since $1_A\in D$, it suffices to show that for any $n\ge0$, any matrix summands $E_1,\dots, E_n$ of $F^{(j_0)}$, any matrix units $e_1\in E_1, \dots, e_n\in E_n$, any matrix units $e\in E, e'\in K$, and any $f_1,\dots, f_{n+1}\in D$, we have that
\begin{equation}\label{eq:rev1}
z\coloneqq \varphi^{(j_0)}(e)\cdot \bigg( f_1 \varphi^{(j_0)}(e_1) f_2 \varphi^{(j_0)}(e_2) \cdots f_n \varphi^{(j_0)}(e_n) f_{n+1}\bigg) \cdot \varphi^{(j_0)}(e') = 0,
\end{equation}
where, if $n=0$, the middle product is to be interpreted as only $f_{n+1}$. Note that since matrix units of any matrix summand in $F^{(j_0)}$ are normalisers of $D_{F^{(j_0)}}$, by \ref{itddlem5} of our application of \Cref{lemma: approx commuting coordinates} we have that each of the $\varphi^{(j_0)}(e_i)$ and $\varphi^{(j_0)}(e),\varphi^{(j_0)}(e')$ are in $\mc{N}_A(D)$. Repeatedly using this observation together with the fact that $D$ is commutative, one sees that for $zz^*$ to be non-zero it is necessary that $K=E_n=\dots=E_1=E$, which is impossible.

Since $f' \varphi^{(j_0)}\psi^{(j_0)}(a) f' =\sum_E f' \varphi_E^{(j_0)}\psi^{(j_0)}_E(a) f'$ which is a sum of pairwise orthogonal contractions, we have that
\begin{equation}\label{eq:max matrix summands}
\|f' \varphi^{(j_0)}\psi^{(j_0)}(a) f'\|=\max_E \|f'\varphi^{(j_0)}_E\psi^{(j_0)}_E(a) f'\|.
\end{equation}
Let $E=M_r$ be a matrix summand appearing in $F^{(j_0)}$ and set $h_E\coloneqq f' \varphi_E^{(j_0)}\psi_E^{(j_0)}(a) f'\in J_E$. To simplify notation, set $\varphi_E \coloneqq \varphi_E^{(j_0)}$. As $E$ is simple, the structure theorem of order zero maps yields that $\varphi_E$ is either zero or injective. In the first case $h_E=0$ and there is nothing to do, so assume that $\varphi_E$ is injective.

If $\{e_{i,j}\}_{i,j=1}^r$ are the matrix units of $E$, we see that $\overline{\varphi_E(e_{i,i}) B \varphi_E(e_{i,i})}\subset D$ for all $i=1,\dots,r$. Indeed, similarly to \eqref{eq:rev1}, it suffices to show that for any $n\ge0$, any matrix units $e_{i_1,j_1},\dots,e_{i_n,j_n}$ of matrix summands of $F^{(j_0)}$, and any $f_1,\dots,f_{n+1}\in D$, we have that
\begin{equation}\label{eq:rev2}
\varphi_E(e_{i,i})\cdot \bigg( f_1 \varphi^{(j_0)}(e_{i_1,j_1}) f_2 \varphi^{(j_0)}(e_{i_2,j_2})\cdots f_n \varphi^{(j_0)}(e_{i_n,j_n}) f_{n+1}\bigg) \cdot \varphi_E(e_{i,i})\in D,
\end{equation}
where if $n=0$, the middle product is to be interpreted as only $f_{n+1}$ (and in which case \eqref{eq:rev2} is obvious). Let $w$ denote the product in \eqref{eq:rev2}. If $w$ is non-zero, then all of $\varphi_E(e_{i,i})f_1\varphi^{(j_0)}(e_{i_1,j_1})$, $\varphi^{(j_0)}(e_{i_1,j_1})f_2\varphi^{(j_0)}(e_{i_2,j_2})$, $\dots$, $\varphi^{(j_0)}(e_{i_{n-1},j_{n-1}}) f_n \varphi^{(j_0)}(e_{i_n,j_n})$, $\varphi^{(j_0)}(e_{i_n,j_n})f_{n+1}\varphi_E(e_{i,i})$ are non-zero. By repeated applications of the $\mathrm{C}^\ast$-identity, the fact that all of the $\varphi^{(j_0)}(e_{i_k,j_k})$ are normalisers of $D$ and since $\varphi^{(j_0)}$ is order zero, it follows that each $e_{i_k,j_k}$ is a matrix unit of $E$, and moreover $i_1=i$, $i_{k+1}=j_k$ for all $k=1,\dots,n-1$, and $j_n=i$. Now using order zero functional calculus, set $\beta\coloneqq(\varphi^{(j_0)})^\frac{1}{2}$, and observe that $\varphi^{(j_0)}(xy)=\beta(x)\beta(y)$ for all $x,y\in F^{(j_0)}$. Putting these together, and since $\beta$ preserves normalisers by \Cref{rmk:cpc-order-zero-functional-calculus}, we see that
\begin{align*}
\varphi^{(j_0)}(e_{i_1,j_1}) f_2 \varphi^{(j_0)}(e_{i_2,j_2})\cdots \varphi^{(j_0)}(e_{i_{n-1},j_{n-1}}) f_n \varphi^{(j_0)}(e_{i_n,j_n}) &= \\
\varphi^{(j_0)}(e_{i,j_1}) f_2 \varphi^{(j_0)}(e_{j_1,j_2})\cdots \varphi^{(j_0)}(e_{j_{n-2},j_{n-1}}) f_n \varphi^{(j_0)}(e_{j_{n-1},i}) &= \\
\underbrace{\beta(e_{i,i})}_{\in D}\cdot \underbrace{\beta(e_{i,j_1}) f_2 \beta(e_{j_1,i})}_{\in D}\cdot \underbrace{\beta(e_{i,j_2}) f_3 \beta(e_{j_2,i})}_{\in D} \cdots \\
\underbrace{\cdots  \beta(e_{j_{n-2},i})}_{\in D}\cdot \underbrace{\beta(e_{i,j_{n-1}}) f_n \beta(e_{j_{n-1},i})}_{\in D}\cdot \underbrace{\beta(e_{i,i})}_{\in D},&
\end{align*}
and therefore \eqref{eq:rev2} shows that $w\in D$.

Let $\pi\colon J_E\to\bh$ be an irreducible representation of $J_E$ such that $\|h_E\|=\|\pi(h_E)\|$. Note that $\varphi_E(1_E)$ commutes with $B$ (due to the structure theorem of order zero maps and the fact that $\varphi_E(1_E)\in D$), and in particular $\pi(\varphi_E(1_E))\in \pi(J_E)'=\mathbb{C}\mathrm{id}_\mc{H}$, so there is $\lambda >0$ such that $\pi(\varphi_E(1_E))=\lambda \cdot \mathrm{id}_\mc{H}$. Now $\theta\coloneqq \frac{1}{\lambda}\cdot  \pi\circ\varphi_E$ is a unital order zero c.p.\ map, whence it is a unital $^*$-homomorphism $M_r\to\bh$, due to the structure theorem of order zero maps. The projections $p_i\coloneqq \theta(e_{i,i})$ are pairwise orthogonal and satisfy $\sum_{i=1}^rp_i=\mathrm{id}_\mc{H}$, and for $x\in \overline{\varphi_E(e_{i,i})J_E\varphi_E(e_{i,i})}$ one has $\pi(x)=p_i\pi(x)=\pi(x)p_i$. Therefore, restricting $\pi$ induces representations $\overline{\varphi_E(e_{i,i})J_E\varphi_E(e_{i,i})}\to p_i\bh p_i\cong \mathbb{B}(p_i\mc{H})$, and we will show that these are irreducible. Before proving this, observe that a consequence of this is that, since each $\overline{\varphi_E(e_{i,i})J_E\varphi_E(e_{i,i})}$ is a non-zero and abelian $\mathrm{C}^\ast$-algebra (as a subset of $D$), each projection $p_i$ must have rank one, and so $\mc{H}$ is $r$-dimensional. It follows that $\bh$ can be identified with $M_r$ in such a way so that $\theta$ becomes the identity map, i.e.\
\begin{equation}\label{eq multiple of id}
\pi\circ\varphi_E=\lambda\cdot\mathrm{id}_E.
\end{equation}

To verify that the restriction of $\pi$ on $\overline{\varphi_E(e_{i,i})J_E\varphi_E(e_{i,i})}$ is irreducible, let $t\in p_i\bh p_i$ be in the commutant of $\pi(\overline{\varphi_E(e_{i,i})J_E\varphi_E(e_{i,i})})$, and set 
\[
\tilde{t}\coloneqq \sum_{j=1}^r\theta(e_{j,i})t\theta(e_{i,j})\in \bh .
\]
For any $y\in J_E$, we have $\pi(y) = \sum_{j,k=1}^r p_j \pi(y) p_k$ and $\theta(e_{i,j})\pi(y)\theta(e_{k,i})\in\pi(\overline{\varphi_E(e_{i,i})J_E\varphi_E(e_{i,i})})$. A direct calculation using these facts shows that $\tilde{t}$ commutes with $\pi(J_E)$. Since $\pi$ is irreducible, $\tilde{t}\in \mathbb{C} \mathrm{id}_\mc{H}$, and thus $t= p_i\tilde{t}p_i \in \mathbb{C} p_i$, as we wanted.


Let $\sigma\colon \overline{\varphi_E(e_{i,i})J_E\varphi_E(e_{i,i})}\to\bb{C}$ be the character satisfying 
\[
\pi(x)=\sigma(x)\cdot e_{1,1}
\] 
for all $x\in \overline{\varphi_E(e_{i,i})J_E\varphi_E(e_{i,i})}$. Since $\overline{\varphi_E(e_{i,i})J_E\varphi_E(e_{i,i})}\subset D$, we can extend $\sigma$ to a character on $D$ which we denote again by $\sigma$, and which we can in turn extend to a state on $A$ as $\tilde{\sigma}:=\sigma\circ\Phi$. Define a map $\varrho\colon A\to M_r$ as
\begin{equation}
\varrho(.)\coloneqq\frac{1}{\lambda^2}\sum_{i,j=1}^r\tilde{\sigma}(\varphi_E(e_{1,i})\; . \;\varphi_E(e_{j,1})) e_{i,j}.
\end{equation}

It is straightforward to check that $\varrho$ is a unital, c.p.\ map. Since $\varphi_E$ is normaliser-preserving by \ref{itddlem5} of \Cref{lemma: approx commuting coordinates} and since $e_{1,i}\in \mc{N}_{M_r}(D_r)$, and using that $n\Phi(.)n^*=\Phi(n\; .\; n^*)$ for any $n\in\mc{N}_A(D)$ \cite[Lemma~6$^\text{o}$]{Kum86} (see also \cite[Lemma~3.2]{CryNag17}), for $b\in A_+$ we have that the diagonal entries of the matrix $\varrho(b)=[\varrho(b)_{i,j}]_{i,j=1}^r$ satisfy
\begin{align}\label{eq:diagentriesconditionalexp}
\begin{split}
0\le\varrho(b)_{i,i} & = \lambda^{-2}\cdot \sigma(\Phi(\varphi_E(e_{1,i})b\varphi_E(e_{i,1}))) \\
 &= \lambda^{-2}\cdot \sigma(\varphi_E(e_{1,i}) \Phi(b) \varphi_E(e_{i,1}) ) \\
 & \le \lambda^{-2}\cdot \|\Phi(b)\|\cdot\sigma(\varphi_E(e_{1,i})\varphi_E(e_{i,1})) \\
 & = \|\Phi(b)\|.
\end{split}
\end{align}

We claim that, for $g\in D_+$ and $e_{i,j}\in M_r$ we have $\varrho(g\varphi_E(e_{i,j})g)=\pi(g\varphi_E(e_{i,j})g)$. Indeed, using that $\varphi_E(D_r)$ is a subset of $D$ which is abelian, by order zero functional calculus we see that $g\varphi_E(e_{i,j})g \in \overline{ \varphi_E(e_{i,i}) A \varphi_E(e_{j,j})}$ and therefore
\begin{equation}\label{eq:rho-extends-pi}
\varrho(g\varphi_E(e_{i,j})g)=\frac{1}{\lambda^2}\tilde{\sigma}  (\varphi_E(e_{1,i})g\varphi_E(e_{i,j})g\varphi_E(e_{j,1}))\cdot e_{i,j}.
\end{equation}
Now since $\varphi_E(e_{1,i})g\varphi_E(e_{i,j})g\varphi_E(e_{j,1})\in J_E \cap\her(\varphi_E(e_{1,1}))\subset D$, we have that 
\begin{align*}
\sigma(\varphi_E(e_{1,i})g\varphi_E(e_{i,j})g\varphi_E(e_{j,1}))\cdot e_{1,1} &= \pi(\varphi_E(e_{1,i})g\varphi_E(e_{i,j})g\varphi_E(e_{j,1})) \\
& =\pi(\varphi_E(e_{1,i})) \pi(g\varphi_E(e_{i,j})g) \pi(\varphi_E(e_{j,1}))  \\
&=\lambda^2 \cdot e_{1,i}\cdot \pi(g\varphi_E(e_{i,j})g)\cdot e_{j,1},
\end{align*}
and thus
\begin{align*}
\sigma(\varphi_E(e_{1,i})g\varphi_E(e_{i,j})g\varphi_E(e_{j,1}))\cdot e_{i,j} &= \lambda^2\cdot  \pi(g\varphi_E(e_{i,j})g),
\end{align*}
which in conjunction with \eqref{eq:rho-extends-pi} shows that $\varrho(g\varphi_E(e_{i,j})g)=\pi(g\varphi_E(e_{i,j})g)$ as we wanted. It follows now that $\varrho(h_E)=\pi(h_E)$. Since $\pi$ was picked so that $\|h_E\|=\|\pi(h_E)\|$, we have that $\|h_E\|=\|\varrho(h_E)\|$. Let $h'_E:=f' a^{1/2}\varphi_E^{(j_0)}\psi_E^{(j_0)}(1_A) a^{1/2} f'$. Since $\|h_E-h'_E\|<\frac{\eta}{2(d+1)}$ by \ref{itddlem6} of \Cref{lemma: approx commuting coordinates} and since $\varrho$ is contractive, we have that
\begin{align}\label{eq:finitedd her}
\begin{split}
\|h_E\| &= \|\varrho(h_E)\| \\
&= \|\varrho(h_E-h'_E)+\varrho(h'_E)\| \\
&\le \frac{\eta}{2(d+1)} +\|\varrho(h'_E)\|.
\end{split}
\end{align}
Using in the first line of the chain of inequalities below that $\varrho$ is positive and $0\le h'_E\le f'af'$ (as $\varphi\psi$ is contractive), we also have that
\begin{align*}
\|\varrho(h'_E)\| &\le \|\varrho(f' a f')\|_{M_r}\\
&\le r\cdot\max_{1\le i \le r}|\varrho(f' a f')_{i,i}| \\
&\stackrel{\mathmakebox[\widthof{=}]{{\eqref{eq:diagentriesconditionalexp}}}}{\le} r\cdot \|\Phi(f' af')\| \\
&= r\cdot \|f'\Phi(a)f'\| \\ 
&< r\cdot\frac{\eta}{2(d+1)r_{\max}} \\
&\le \frac{\eta}{2(d+1)}
\end{align*}
and combining this estimate with \eqref{eq:finitedd her} yields that $\|h_E\|<\eta/(d+1)$. Since $E$ was an arbitrary matrix summand of $F^{(j_0)}$, \eqref{eq:max matrix summands} implies that $\|f' \varphi^{(j_0)}\psi^{(j_0)}(a) f'\|<\eta/(d+1)$, and since this holds for any $j_0\in\{0,\dots,d\}$, the proof is complete.
\end{proof}

\begin{remark}
It is plausible that the conditional expectation of a diagonal pair $(D \subset A)$ is automatically hereditary when $A$ is assumed to be nuclear. On the other hand, hereditariness cannot be expected to hold automatically without any amenability assumptions: by \cite[Corollary~5.1]{RoeWil14}, any non-exact discrete group $G$ admits a free action on a space $X$ such that the induced action $G\acts C(X)$ fails to be exact in the sense of \cite[Definition~1.2]{Sie10}. By \cite[Proposition~1.3]{Sie10}, there exists some positive element in the crossed product $C(X)\rtimes_\mathrm{r}G$ that does not belong to the ideal generated by its conditional expectation, and in particular the conditional expectation fails to be hereditary. Regularity of $(D \subset A)$ cannot be relaxed either, as the following example illustrates.

\end{remark}

\begin{example}\label{ex:nonhereditary}
We give an example of a unital $\mathrm{C}^*$-pair $(D\sub A)$ where $A$ is separable and AF, $D$ has zero-dimensional spectrum and is a masa in $A$, $(D\sub A)$ has the unique extension property, the induced conditional expectation $\Phi\colon A\to D$ is faithful and there exists a projection $p\in A$ so that $p\not\in\her(\Phi(p))$. The pair $(D \subset A)$ described below is not regular.

Consider the $\mathrm{C}^*$-diagonal $(D_{2^\infty}\sub M_{2^\infty})$ where $M_{2^\infty}$ is the UHF $\mathrm{C}^\ast$-algebra of type $2^\infty$ and $D_{2^\infty}$ is its standard diagonal. Consider also the canonical (unitally contained) sub-$\mathrm{C}^\ast$-algebras
\begin{equation*}
\begin{tikzcd}
D_2\ar[d, hook]\ar[r, hook] & \dots\ar[r]& D_2^{\otimes k} \ar[d, hook]\ar[r, hook] & D_2^{\otimes (k+1)} \ar[d, hook]\ar[r, hook] & \dots \ar[r, hook] & D_{2^\infty} \ar[d, hook] \\
M_2\ar[r, hook] & \dots\ar[r]& M_2^{\otimes k}\ar[r, hook] & M_2^{\otimes (k+1)}\ar[r, hook] & \dots \ar[r, hook] & M_{2^\infty}.
\end{tikzcd}
\end{equation*}
Let $E\colon M_{2^\infty}\to D_{2^\infty}$ denote the associated faithful conditional expectation and observe that $E\vert_{M_2^{\otimes k}}$ coincides with $E_2^{\otimes k}\colon M_2^{\otimes k}\to D_2^{\otimes k}$.

Consider now the $\mathrm{C}^*$-algebra $B:=\ell^\infty(\bb{N},M_{2^\infty})$ of bounded sequences over $M_{2^\infty}$ and let $D\subset B$ denote the (abelian) sub-$\mathrm{C}^\ast$-algebra of convergent sequences over $D_{2^\infty}$, namely $D:=C(\bb{N}^{(+)},D_{2^\infty})$, where $\bb{N}^{(+)}=\bb{N}\sqcup\{\infty\}$ is the one point compactification of $\bb{N}$. We define the projection $p=(p_n)_{n\ge1}\in B$, where $p_n\in M_2^{\otimes n}\subset M_{2^\infty}$ is the projection given by 
\begin{equation*}
p_n:=\frac{1}{2^n}\sum_{\bar{\imath},\bar{\jmath}\in \{1,2\}^n} e_{i_1,j_1}\otimes\dots\otimes e_{i_n,j_n}
\end{equation*}
where $\bar{\imath} = (i_1,\dots,i_n)$ and $\bar{\jmath} = (j_1,\dots,j_n)$. Let $\mathrm{C}^\ast(D,p)=:A\subset B$. Note that $(D\sub A)$ is a unital $\mathrm{C}^*$-pair where $A$ is separable and $D$ has zero-dimensional spectrum. 

We first show that $(D\sub A)$ has the unique extension property. Viewing $D$ as the tensor product of the $\mathrm{C}^*$-algebra $C(\bb{N}^{(+)})$ of convergent sequences (over $\mathbb{C}$) with $D_{2^\infty}$, we have that the set $\mathrm{PS}(D)$ of pure states on $D$ is described as
\begin{equation*}
\{\mathrm{ev}_n\otimes\psi: n\in\bb{N},\psi\in\mathrm{PS}(D_{2^\infty})\}\sqcup\{\mathrm{ev}_\infty\otimes\psi:\psi\in\mathrm{PS}(D_{2^\infty})\},
\end{equation*}
where $\mathrm{ev}_n(\lambda_j)_{j\ge1} \coloneqq \lambda_n$ for $n\in\bb{N}$ and $\mathrm{ev}_\infty(\lambda_j)_{j\ge1} \coloneqq \lim_{j\to\infty}\lambda_j$ for all $(\lambda_j)_{j\ge1}\in C(\bb{N}^{(+)})$.

Let $n\in\bb{N}$ and $\psi\in\mathrm{PS}(D_{2^\infty})$ and let $\varrho\in\mathrm{S}(B)$ be a state extending $\mathrm{ev}_n\otimes\psi$. If $\iota_n\colon M_{2^\infty} \hookrightarrow B$ denotes the $n$-th coordinate embedding, then $\varrho\circ\iota_n$ is a state on $M_{2^\infty}$ extending $\psi$, whence by the unique extension property of $(D_{2^\infty} \subset M_{2^\infty})$ we have $\varrho\circ\iota_n = \psi\circ E$. Also, it follows directly by its definition that $\varrho$ vanishes on sequences in $B$ where the $n$-th entry is zero. This shows that $\varrho$ is unique, so $\mathrm{ev}_n\otimes\psi$ extends uniquely to $B$ (and thus also to $A$).

Now let $\psi\in\mathrm{PS}(D_{2^\infty})$ and consider the pure state $\theta\coloneqq \mathrm{ev}_\infty\otimes\psi\in\mathrm{PS}(D)$. Clearly $\theta$ will fail to extend uniquely on $B$, since if $\omega\in\beta\bb{N}\setminus\bb{N}$ is any free ultrafilter, the state $(b_j)_{j\ge1}\mapsto\lim_{j\to\omega}\psi(E(b_j))$ will extend $\theta$ on $B$. Nevertheless, since $A=\mathrm{C}^\ast(D,p)$, if we show that any extension $\bar{\theta}$ of $\theta$ on $A$ vanishes at $p$, then since $p$ is a projection we will have that $0=|\bar{\theta}(p)|^2=\bar{\theta}(p^2)$, and so $p$ will lie in the multiplicative domain of $\bar\theta$. As $D$ also lies in the multiplicative domain of $\bar{\theta}$ since $\theta\in\mathrm{PS}(D)$, we have that $\bar{\theta}$ vanishes on any product of elements from $D\cup \{p\}$ whenever at least one term $p$ appears. Since $A=\mathrm{C}^\ast(D,p)$, this shows that the extension of $\theta$ is uniquely determined on $A$. Now since $\psi\in\mathrm{PS}(D_{2^\infty})$ and the spectrum of $D_{2^\infty}$ is the Cantor space $\{1,2\}^\bb{N}$, there is a point $x=(x_j)_{j\ge1}\in\{1,2\}^\bb{N}$ such that $\psi$ corresponds to evaluation at $x$. For $k\in\bb{N}$ set $\bar{x}_k:=(x_j)_{j=1}^k\in\{1,2\}^k$ and, for a $k$-tuple $\bar{y}:=(y_j)_{j=1}^k\in\{1,2\}^k$ set $e_{\bar{y}}:=e_{y_1,y_1}\otimes\dots\otimes e_{y_k,y_k}\in D_2^{\otimes k}\subset D_{2^\infty}$. Now let $e_k:=(0,\dots,0,e_{\bar{x}_k},e_{\bar{x}_k},\dots)\in D$, where the segment of zeroes in the beginning of $e_k$ has length $k$. Clearly  $\theta(e_k)=1$ for all $k\in\bb{N}$, so each $e_k$ lies in the multiplicative domain of $\theta$, hence $\theta(p)=\theta(e_kpe_k)$ for all $k\in\bb{N}$. We have that $\|e_kpe_k\|=\sup_{n\ge k+1}\|e_{\bar{x}_k}p_ne_{\bar{x}_k}\|_{M_2^{\otimes n}}$, and for $n\ge k+1$ we have
\begin{align*}
\|e_{\bar{x}_k}p_ne_{\bar{x}_k}\|_{M_2^{\otimes n}}&=\bigg\|\frac{1}{2^n}\sum e_{x_1,x_1}\otimes\dots\otimes e_{x_k,x_k}\otimes e_{i_{k+1},j_{k+1}}\otimes\dots\otimes e_{i_n,j_n}\bigg\| \\
&=\frac{1}{2^k},
\end{align*}
where the sum is taken over all $i_{k+1},j_{k+1},\dots,i_n,j_n\in\{1,2\}$. We thus conclude that $\|e_kpe_k\|=2^{-k}\to0$, so $\theta(p)=\lim_k\theta(e_kpe_k)=0$, as we wanted.

Since $(D\sub A)$ has the unique extension property, $D$ is a masa in $A$ and there exists a unique conditional expectation $\Phi\colon A\to D$, that also has the property that the unique extension of a pure state $\theta\in\mathrm{PS}(D)$ is given by $\theta\circ\Phi$ \cite[Remark~2.6, Corollary~2.7]{ArcBunGre82}. Evidently $\Phi$ is faithful: if $a=(a_j)_{j\ge1}\in A_+\subset\ell^\infty(\bb{N},M_{2^\infty})_+$ is such that $\Phi(a)=0$, then for $n\in\bb{N}$ and $\psi\in\mathrm{PS}(D_{2^\infty})$, we have $\psi(E(a_n))=(\mathrm{ev}_n\otimes\psi)\circ \Phi(a)=0$ and since $\psi$ is arbitrary this shows that $E(a_n)=0$ in $D_{2^\infty}$, hence $a_n=0$ in $M_{2^\infty}$ since $E$ is faithful, and since $n\in\bb{N}$ was arbitrary, we conclude that $a=0$. Note however that $\Phi$ is not hereditary: since $\Phi(p)$ is a convergent sequence in $D_{2^\infty}$, let $d\in D_{2^\infty}$ denote its limit. As explained earlier, for any $\psi\in\mathrm{PS}(D_{2^\infty})$ we have that $(\mathrm{ev}_\infty\otimes\psi)\circ\Phi(p)=0$, so $\psi(d)=0$ for all $\psi\in\mathrm{PS}(D_{2^\infty})$, hence $d=0$, which shows that $\Phi(p)\in C_0(\bb{N}, D_{2^\infty})$ and thus $\her(\Phi(p))\subset C_0(\bb{N}, M_{2^\infty})$. Clearly $p\not\in C_0(\bb{N},M_{2^\infty})$, so $p\not\in\her(\Phi(p))$.

Note that $(D\sub A)$ is non-regular, since $\mc{N}_A(D)=D$. Also, $A$ is an AF algebra, which we prove by showing that $A$ is locally AF. Since $A$ is generated by $D\cup\{p\}$, it suffices to show that if $\Omega$ is a finite subset of $D\cup\{p\}$ and $\varepsilon>0$, then there exists a finite-dimensional sub-$\mathrm{C}^\ast$-algebra $F\subset A$ such that $\mathrm{dist}(a,F)<\varepsilon$ for all $a\in\Omega$. 

Since each $d\in \Omega\cap D$ is a convergent sequence with entries in $D_{2^\infty}$ we obtain $k \in \bb{N}$ such that each $d \in D$ is approximated by a sum of a sequence with the first $k$ entries lying in $D_2^{\otimes k}$ and the rest of the entries being $0$, and a sequence with the first $k$ entries being $0$ and the rest of the entries being all the same and equal to some element in $D_2^{\otimes k}$.
Set 
\begin{equation*}
C_k \coloneqq \big\{(b_j)_{j\ge1}\in B : b_1,\dots,b_k\in M_2^{\otimes k}, b_j=0\text{ for }j\ge k+1\big\}\cong \bigoplus_{j=1}^kM_2^{\otimes k}
\end{equation*}
and $h_k \coloneqq 1_B-1_{C_k}\in D$ and for $\bar{x}\in\{1,2\}^k$ set $\tilde{e}_{\bar{x}} \coloneqq (0,\dots,0,e_{\bar{x}},e_{\bar{x}},\dots)\in D$ where the initial segment of zeroes here has length $k$. We claim that there exists a finite-dimensional $\mathrm{C}^*$-algebra $F_k\subset A$ with $1_{F_k} = h_k$ and such that, if $q_k:=h_kp=(0,\dots,0,p_{k+1},p_{k+2},\dots)\in A$, then
\begin{equation*}
\{\tilde{e}_{\bar{x}}:\bar{x}\in\{1,2\}^k\}\cup\{q_k\}\subset F_k.
\end{equation*}
In particular $F_k$ will contain every sequence of the form $(0,\dots,0,d,d,d,\dots)$ where $d\in D_2^{\otimes k}$, and so $F \coloneqq (C_k \cap A) + F_k$ will be a finite-dimensional sub-$\mathrm{C}^\ast$-algebra of $A$ that approximately contains $\Omega$, as explained above.

To prove the claim, for $\bar{x},\bar{y}\in\{1,2\}^k$ we set $h_{\bar{x},\bar{y}}:=2^k\tilde{e}_{\bar{x}}\cdot p\cdot \tilde{e}_{\bar{y}}$ and straightforward calculations show that $\{h_{\bar{x},\bar{y}}\}_{\bar{x},\bar{y}\in\{1,2\}^k}$ form a system of matrix units for $M_{2^k}$. We can thus define a (non-unital) embedding $\iota\colon M_{2^k}\hookrightarrow A$ by declaring $\iota(e_{\bar{x},\bar{y}}):=h_{\bar{x},\bar{y}}$ on the matrix units $\{e_{\bar{x},\bar{y}}\}_{\bar{x},\bar{y}\in\{1,2\}^k}$ of $M_{2^k}$ and extending linearly. It is clear that $\iota(1)\le h_k$, that $q_k=\iota(\frac{1}{2^k}\sum_{\bar{x},\bar{y}}e_{\bar{x},\bar{y}})$ and that $\tilde{e}_{\bar{x}}\cdot\iota(1)=h_{\bar{x},\bar{x}}=\iota(1)\cdot\tilde{e}_{\bar{x}}$. Set $g_{\bar{x}}:=(h_k-\iota(1))\tilde{e}_{\bar{x}}=\tilde{e}_{\bar{x}}-h_{\bar{x},\bar{x}}$ for all $\bar{x}\in\{1,2\}^k$. We have that $\{g_{\bar{x}}\}_{\bar{x}\in\{1,2\}^k}$ are pairwise orthogonal projections that are all orthogonal to $\iota(M_{2^k})$, hence $F_k:=\mathrm{span}\{g_{\bar{x}}:\bar{x}\in\{1,2\}^k\}+\iota(M_{2^k})\subset A$ is a sub-$\mathrm{C}^*$-algebra isomorphic to $\bb{C}^{2^k}\oplus M_{2^k}$. As we already noted, $q_k\in\iota(M_{2^k})\subset F_k$ and also, $\tilde{e}_{\bar{x}}=g_{\bar{x}}+h_{\bar{x},\bar{x}}\in F_k$ for all $\bar{x}\in\{1,2\}^k$, as we wanted.

\end{example}

\end{document}